\newtheorem{theorem}{Theorem}[section]
\newtheorem{proposition}[theorem]{Proposition}
\newtheorem{corollary}[theorem]{Corollary}
\newtheorem{lemma}[theorem]{Lemma}
\theoremstyle{definition}
\newtheorem{definition}[theorem]{Definition}
\newtheorem{example}[theorem]{Example}
\theoremstyle{remark}
\newtheorem{conjecture}[theorem]{Conjecture}
   \newenvironment{eq}{\begin{equation}}{\end{equation}}
\newcommand{\cO}{{\mathcal O}}
\newcommand{\cQ}{{\mathcal Q}}
\newcommand{\cV}{{\mathcal V}}
\newcommand{\cU}{{\mathcal U}} 
\newcommand{\g}{{\mathfrak g}}
\newcommand{\fb}{{\mathfrak b}}
\newcommand{\fu}{{\mathfrak u}}
\newcommand{\Q}{\mathbb Q}
\newcommand{\Z}{\mathbb Z}
\newcommand{\C}{\mathbb C}
\newcommand{\R}{\mathbb R}
\newcommand{\proj}{\mathbb P}
\newcommand{\pu}{\mathbb P ^1}
\newcommand{\bQ}{\mathbb Q}
\newcommand{\bZ}{\mathbb Z}
\newcommand{\bR}{\mathbb R}
\newcommand{\bP}{\mathbb P}
\newcommand{\bA}{\mathbb A}
\newcommand{\bB}{\mathbb B}
\newcommand{\bC}{\mathbb C}
\newcommand{\bD}{\mathbb D}
\newcommand{\bE}{\mathbb E}
\newcommand{\bF}{\mathbb F}
\newcommand{\bG}{\mathbb G}
\newcommand{\ra}{\rightarrow}
\newcommand{\lra}{\longrightarrow}
\newcommand{\iso}{\simeq}
\DeclareMathOperator{\Spec}{Spec}
\DeclareMathOperator{\Symm}{Symm}
\DeclareMathOperator{\Proj}{Proj}
\DeclareMathOperator{\Nef}{Nef}
\DeclareMathOperator{\Ess}{Ess}
\DeclareMathOperator{\Mov}{Mov}
\DeclareMathOperator{\Pic}{Pic}
\DeclareMathOperator{\Cl}{Cl}
\DeclareMathOperator{\codim}{codim}
\newcommand{\bubla}{\blacksquare}
\newcommand{\bured}{{\color{red}\blacktriangle}\normalcolor}
\newcommand{\bublu}{{\color{blue}\bigstar}}
\newcommand{\bugre}{{\color{green}\blacklozenge}}
\begin{document}


\title[Symplectic contractions]{4-dimensional symplectic contractions} 
\author[Andreatta \& Wi\'sniewski]{Marco Andreatta and Jaros\l{}aw
  A. Wi\'sniewski}

\thanks{}

\address{Dipartimento di Matematica, Universita di Trento, I-38050
Povo (TN)} \email{marco.andreatta@unitn.it}

\address{Instytut Matematyki UW, Banacha 2, PL-02097
Warszawa} \email{J.Wisniewski@mimuw.edu.pl}

\thanks{ The first author was supported by the Italian PRIN; he thanks
  also the Institute of Mathematics of Warsaw University for
  hospitality. The second author was supported by Polish MNiSzW grants
  N N201 2653 33 and N N201 4206 39 as well as Fondazione Bruno
  Kessler at CIRM in Trento. Thanks to Andrzej Weber for stimulating
  discussions.} \subjclass{20C10, 14E15, 14F10, 14J28, 14J32}

\begin{abstract}
  Local symplectic contractions are resolutions of singularities which
  admit symplectic forms.  Four dimensional symplectic contractions
  are (relative) Mori Dream Spaces. In particular, any two such
  resolutions of a given singularity are connected by a sequence of
  Mukai flops. We discuss the cone of movable divisors on such a
  resolution; its faces are determined by curves whose loci are
  divisors, we call them essential curves. The movable cone is divided
  into nef chambers which are related to different resolutions; this
  subdivision is determined by classes of 
  1-cycles.  We also study schemes parametrizing minimal essential
  curves and show that they are resolutions, possibly non-minimal, of
  surface Du Val singularities.  Some examples, with an exhaustive
  description, are provided.
\end{abstract}

\maketitle

\section{Introduction}
In the paper we consider {\it local symplectic contractions} of
$4$-folds. That is, we deal with maps $\pi : X \ra Y$ where
\begin{itemize}
\item $X$ is a smooth complex $4$-fold with a closed holomorphic
  $2$-form, non-degenerate at every point,
\item $Y$ is an affine (or Stein) normal variety,
\item $\pi$ is a birational projective morphism.
\end{itemize}

In dimension 2 symplectic contractions are classical and they are
minimal resolutions of Du Val singularities. In fact, any symplectic
contraction can be viewed as a special symplectic resolution of a
symplectic normal singularity. 

General properties of symplectic contractions (in arbitrary dimension)
have been considered in a number of papers published in the last
decade: \cite{BeauvilleSympSing}, \cite{Verbitsky}, \cite{Namikawa},
\cite{KaledinMcKay}, \cite{KaledinResQSing}, \cite{Wierzba},
\cite{FuNamikawa}, \cite{GinzburgKaledin}, \cite{FuMukaiFlops},
\cite{HassettTschinkel}, \cite{Bellamy}, \cite{LehnSorger}, to mention
just a few; see also \cite{FuSurvey} for more references and a review
on earlier developments in this subject. Let us just recall two
beautiful results about symplectic contractions: these maps are
semismall, \cite{Wierzba}, and the McKay correspondence holds for those
symplectic contractions which are resolutions of quotient symplectic
singularities, \cite{KaledinMcKay}, \cite{GinzburgKaledin}.  However,
in dimension 4 and higher, apart from the description in codimension 2,
\cite{Wierzba}, not much is known about the fine geometrical structure
of these morphisms which is the problem we want to tackle in the
present paper.

The 4-dimensional small case (i.e.~when $\pi$ does not contract a
divisor) is known by \cite[Thm.~1.1]{WierzbaWisniewski}.  Using this
result we first prove in Section \ref{MDSstructure} that $X$ is a Mori
Dream Space over $Y$, as defined in \cite{HuKeel}.  In short, every
movable divisor of $X$ (over $Y$) can be made nef and semiample after
a finite number of small modifications (flops); see also
\cite{Wierzba-unpublished} and \cite[Thm.~1.2]{WierzbaWisniewski},
where a version of this result was announced. In Theorem
\ref{cone-structure} we describe the cone of movable divisors of
$X/Y$; by Theorem \ref{dualityMovEss} it can be also described in
terms of classes of curves contracted in the divisorial locus. As a
result, these objects are (with one possible exception) as in the case
of the contraction to the nilpotent cone (we recall this situation in
the Appendix). Next, we give properties of the subdivision of the cone
of movable divisors into chambers corresponding to nef cones of small
modifications of $X$, see Theorem \ref{hyperplane}.

In Section \ref{rational-curves}, following the approach introduced in
\cite{Wierzba} and subsequently in \cite{SolaCondeWisniewski}, we
study families of rational curves (i.e. irreducible components of the
Chow scheme) in $X/Y$. In Theorem \ref{DuValsing} we prove that they
are resolutions of Du Val singularities, possibly non-minimal; the
discrepancies of this resolution depend on the rank of the evaluation
map, see Proposition \ref{discrepancy}, and on the modification of
$X$, see Lemma \ref{changeV}. We also show that studying 4-dimensional
symplectic resolutions implies understanding arbitrary dimensional
case in codimension 4, via the argument of general intersection of a
suitable number of divisors, we call it vertical slicing, see
Corollary\ref{vertical-slicing}.

In Section \ref{examples} we study known examples of resolutions of
quotient symplectic singularities and we describe explicitly their
movable cones and their families of rational curves. In particular, we
describe explicitly the division of the movable cone of a symplectic
resolution of $\bC^4/(\bZ_{n+1}\wr\bZ_2)$ into nef cones which are
associated with different resolutions of this singularity. The main
result of this section, Theorem \ref{cone-division}, gives a
description of the cone of movable divisors into nef chambers by
hyperplanes defined by classes of contracted curves contained in the
divisorial locus. The following Theorem summarizes our results in this
particular case and, in fact, serves as a model of the situation that
we expect to get while dealing with arbitrary local symplectic
contraction in dimension 4 (for definitions and other examples see
Section 4).

\begin{theorem}\label{example-theorem}
  Let $\bA_1\oplus\bA_n$ be a decomposable root system in
  $V\iso\bR^{n+1}$ with simple (positive) roots denoted by $e_0$ and
  $e_1,\dots, e_n$. Let $\Mov=\langle e_0, e_1,\dots, e_n\rangle^\vee$
  be the cone in $V^\vee$ dual to the cone spanned by the roots
  $e_0,e_1,\dots, e_n$. Then the symplectic resolutions of the
  singularity $\bC^4/(\bZ_{n+1}\wr\bZ_2)$ are in bijective relation
  with maximal dimensional cones obtained by cutting the cone $\Mov$
  with hyperplanes $(e_0-e_{ij})^\perp$ where vectors
  $e_{ij}=e_i+\cdots+e_j$, for $1\leq i\leq j\leq n$, are all positive
  roots of the root system $\bA_n$.
\end{theorem}

\section{Notation and preliminaries}

\subsection{Symplectic contractions}\label{defSymContr}
A holomorphic $2$-form $\omega$ on a smooth variety is called {\bf
  symplectic} if it is closed and non-degenerate at every point. A
{\bf symplectic variety} is a normal variety $Y$ whose smooth part
admits a holomorphic symplectic form $\omega_Y$ such that its pull
back to any resolution $\pi: X \ra Y$ extends to a holomorphic
$2$-form $\omega_X$ on $X$. We call $\pi$ a {\bf symplectic
  resolution} if $\omega_X$ is non degenerate on $X$, i.e. it is a
symplectic form.  More generally, a map $\pi:X \ra Y$ is called a {\bf
  symplectic contraction} if $X$ is a symplectic manifold, $Y$ is
normal and $\pi$ is a birational projective morphism. If moreover $Y$
is affine we will call $\pi:X \ra Y$ a {\bf local symplectic
  contraction} or {\bf local symplectic resolution}. The following
facts are well known, see the survey paper \cite{FuSurvey} and
references therein.

\begin{proposition} Let $Y$ be a symplectic variety and $\pi: X \ra Y$
  be a resolution. Then the following statement are equivalent: (i)
  $\pi^*K_Y=K_X$, (ii) $\pi$ is symplectic, (iii) $K_X$ is
  trivial, (iv) for every symplectic form on $Y_{reg}$ its pull-back
  extends to a symplectic form on $X$.
\end{proposition}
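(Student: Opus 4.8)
The plan is to reduce the four conditions to a single genuine geometric fact, after collecting the standard consequences of $Y$ being a symplectic variety. Write $2n=\dim Y=\dim X$. Since $\omega_Y^{\wedge n}$ is a nowhere vanishing section of $K_{Y_{reg}}$ and $Y$ is normal, taking reflexive hulls gives $K_Y\iso\cO_Y$; in particular $Y$ is Gorenstein and $\pi^*K_Y\iso\cO_X$. I would then invoke the well known facts that a symplectic variety has rational, hence canonical, Gorenstein singularities (see \cite{FuSurvey}, \cite{BeauvilleSympSing}): this makes all discrepancies of $\pi$ nonnegative, so in $K_X=\pi^*K_Y+E$ the $\pi$-exceptional divisor $E$ is effective. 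Finally I would use two elementary facts about a proper birational morphism $\pi\colon X\ra Y$ of normal varieties with $X$ smooth: a holomorphic form on $X$ is determined by its restriction to any dense open set, and the divisorial part of the locus where $\pi$ fails to be an isomorphism is exactly the union of the $\pi$-exceptional prime divisors (that locus lies over a closed subset of $Y$ of codimension $\ge 2$, so along every prime divisor of $Y$ the morphism $\pi$ is a local isomorphism).

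The only non-formal point I would isolate as a lemma: \emph{an effective $\pi$-exceptional divisor $D$ on $X$ with $\cO_X(D)\iso\cO_X$ is zero.} Indeed, write $D=\operatorname{div}_X(f)$; since $D\ge 0$ we have $f\in H^0(X,\cO_X)=H^0(Y,\cO_Y)$, so $\operatorname{div}_Y(f)\ge 0$, while the exceptionality of $D$ forces $\ord_B(f)=0$ along every prime divisor $B$ of $Y$; hence $\operatorname{div}_Y(f)=0$, $f$ is invertible on $Y$, and $D=\operatorname{div}_X(f)=0$.

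Granting the lemma, the equivalences follow quickly. (i)$\Leftrightarrow$(iii): because $\pi^*K_Y\iso\cO_X$ one has $E\lin K_X$, so $K_X$ is trivial iff $E\lin 0$, iff $E=0$ by the lemma, iff $\pi$ is crepant. (ii)$\Rightarrow$(iii): $\omega_X^{\wedge n}$ is a nowhere vanishing section of $\Omega^{2n}_X=K_X$. (iii)$\Rightarrow$(ii): by (iii), $\omega_X^{\wedge n}\in H^0(X,\Omega^{2n}_X)=H^0(X,\cO_X)$; on the dense open isomorphism locus of $\pi$ one has $\omega_X=\pi^*\omega_Y$, which is non-degenerate, so the zero divisor of $\omega_X^{\wedge n}$ is effective, contained in the non-isomorphism locus, hence $\pi$-exceptional, and linearly trivial; the lemma forces it to vanish, i.e.\ $\omega_X$ is non-degenerate everywhere. (iv)$\Rightarrow$(ii): apply (iv) to $\omega_Y$ itself and use uniqueness of the extension. (iii)$\Rightarrow$(iv): since $Y$ has rational singularities, for a resolution the pull-back of \emph{any} holomorphic $2$-form $\alpha$ on $Y_{reg}$ extends to a holomorphic $2$-form $\alpha_X$ on $X$ (a standard fact about symplectic/rational Gorenstein singularities, see \cite{FuSurvey} and the references there); if $\alpha$ is symplectic, the argument just used for $\omega_X$ applies verbatim to $\alpha_X$.

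The main obstacle is precisely the lemma, together with the codimension-$2$ bookkeeping that identifies the relevant degeneracy and discrepancy divisors as effective and $\pi$-exceptional: once that is in place, triviality of $K_X$, crepancy of $\pi$ and non-degeneracy of $\omega_X$ become three faces of one assertion. The only other care needed is to quote the correct structural results on symplectic varieties—Gorenstein canonical singularities, and the extension of every holomorphic $2$-form on $Y_{reg}$, not just $\omega_Y$—in order to obtain (iv) in full strength.
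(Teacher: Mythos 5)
The paper offers no proof here---the proposition is quoted as well known from \cite{FuSurvey}---so there is no argument of the authors to compare against; your proof is correct and is essentially the standard one, reducing (i), (ii), (iii) to the vanishing of a single effective exceptional principal divisor via your lemma. Two remarks. First, the effectivity of $E=K_X-\pi^*K_Y$ can be obtained without citing rational singularities: the defining property of a symplectic variety already provides the extension $\omega_X$ of $\pi^*\omega_Y$, and once $K_Y$ is trivialized by $\omega_Y^{\wedge n}$ the divisor $\operatorname{div}(\omega_X^{\wedge n})$ is an effective, $\pi$-exceptional representative of $E$ --- precisely the divisor your lemma then forces to vanish under hypothesis (iii). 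Second, the one step where an outside theorem is genuinely unavoidable is (iii)$\Rightarrow$(iv): the definition of a symplectic variety only guarantees the extension of the single chosen $\omega_Y$, and the extension of an \emph{arbitrary} holomorphic $2$-form on $Y_{reg}$ across a resolution of a rational Gorenstein singularity is Namikawa's theorem \cite{Namikawa}; you correctly isolate this input (citing it through \cite{FuSurvey}). A cosmetic point: $K_Y\iso\cO_Y$ by itself only makes $K_Y$ Cartier rather than $Y$ Gorenstein --- the Cohen--Macaulay half comes with the rational singularities you quote next --- but nothing in your argument uses it.
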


\begin{theorem}\label{semis} A symplectic resolution $\pi : X\ra Y$
  is semismall, that is for every closed subvariety $Z \subset X$ we
  have $2 \codim Z \geq \codim\pi(Z) $.  If equality holds $Z$ then is
  called a maximal cycle.
\end{theorem}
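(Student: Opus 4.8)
The plan is to reduce the statement to a fact in symplectic linear algebra about the differential of $\pi$ along its fibres. It suffices to treat an irreducible closed $Z\subset X$; put $W=\pi(Z)$, let $F$ be a general fibre of the induced morphism $Z\ra W$, and set $d=\dim F$, $w=\dim W$, so $\dim Z=w+d$. As $\dim X=\dim Y$, the inequality $2\codim Z\geq\codim\pi(Z)$ is equivalent to $w+2d\leq\dim X$. If $Z$ is not contained in the exceptional locus of $\pi$ then $Z\ra W$ is generically finite, so $d=0$ and the inequality is trivial; hence we may assume $Z$ lies in the exceptional locus, so that $W$ is contained in the singular locus of $Y$. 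By generic smoothness (we are in characteristic $0$) I would fix a general point $x\in Z$ at which $Z$, $W$ and $F$ are smooth, with $T_xF=\ker\bigl(d\pi_x|_{T_xZ}\bigr)$ of dimension $d$.

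The core is the following claim about the symplectic form $\omega=\omega_X$ at such an $x$: the subspace $T_xF\subset T_xX$ is isotropic for $\omega_x$, and it is $\omega_x$-orthogonal to a symplectic subspace $\Lambda_x\subset T_xX$ whose image under $d\pi_x$ is the tangent space at $\pi(x)$ of the symplectic leaf $S$ of $\pi(x)$ in $Y$ (so $\dim\Lambda_x=\dim S=:2k$). Granting this, $T_xF$ is an isotropic subspace of the symplectic space $\Lambda_x^{\perp}$ of dimension $\dim X-2k$, hence $d\leq \tfrac12\dim X-k$. Since, by the product structure of symplectic singularities recalled below, the stratum of $Y$ through a general point of $W$ is just a single leaf, one gets $w=\dim W\leq 2k$; combining, $w+2d\leq 2k+(\dim X-2k)=\dim X$. (Equality forces $T_xF$ to be Lagrangian in $\Lambda_x^{\perp}$, which is precisely the maximal-cycle case.)

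To prove the claim I would first equip $Y$ with a Poisson structure: as $\pi$ is birational and $Y$ normal, $\pi_*\cO_X=\cO_Y$, so the Poisson bracket on $X$ coming from $\omega$ descends to a bracket on $\cO_Y$ with $\{\pi^*a,\pi^*b\}=\pi^*\{a,b\}_Y$. The ``leaf part'' of the claim is then formal: the Hamiltonian vector fields $H_{\pi^*a}(x)$ of pulled-back functions span $\Lambda_x$; they are $\omega_x$-orthogonal to $T_xF$ because $\omega_x\bigl(H_{\pi^*a}(x),v\bigr)=d(\pi^*a)_x(v)=0$ for $v$ tangent to a fibre of $\pi$; and $\omega_x\bigl(H_{\pi^*a}(x),H_{\pi^*b}(x)\bigr)=\{a,b\}_Y(\pi(x))$ has rank equal to that of the Poisson bivector at $\pi(x)$, namely $2k$, which also fixes $\dim\Lambda_x$ and hence the dimension count above.

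The genuinely hard point --- and the step I expect to be the main obstacle --- is the \emph{isotropy} of $T_xF$. On $\pi^{-1}(Y_{reg})$ one has $\omega=\pi^*\omega_Y$, so $\omega$ kills fibre directions there; but over $Y_{reg}$ the fibres are points, so this is vacuous, and the issue is to propagate the vanishing across the exceptional fibres. The plan here is to invoke the local analytic product structure of symplectic singularities: near a general point $\pi(x)$, the germ $(Y,\omega_Y)$ is analytically $(S,\omega_S)\times(T,o)$ with $(T,o)$ a symplectic singularity of dimension $\dim X-2k$ admitting a smooth symplectic resolution, and $\pi$ decomposes compatibly; this reduces the claim to the case $w=0$, namely that the fibre over the ``deepest'' point of a symplectic resolution is isotropic. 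That case is then handled directly: $\omega$ restricts on this fibre --- compact, because $\pi$ is projective --- to a closed holomorphic $2$-form, which one shows must vanish, e.g.\ because $\omega$ is a coboundary on a punctured neighbourhood of the fibre so its class dies upon restriction. Making this last argument watertight, independently of the structure theorem if possible, is where the real work lies.
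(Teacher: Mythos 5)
The paper does not prove Theorem \ref{semis}; it is recalled as a known result from \cite{Wierzba} (see also \cite{KaledinPoisson}, \cite{Namikawa}), and the only piece of it the paper later sources explicitly is the isotropy of the fibres, quoted from \cite[2.20]{WierzbaWisniewski} in the proof of \ref{symplectic-strata}. Measured against that standard argument, your reduction and the Poisson-theoretic bookkeeping are essentially the right ones and are sound, \emph{granting} two inputs: (i) the isotropy of $T_xF$, and (ii) Kaledin's theorem that $Y$ is stratified by finitely many smooth symplectic leaves, which is what gives $w\le 2k$ at a general point of $W$. One small inaccuracy: $\dim\Lambda_x$ equals the rank of $d\pi_x$ on Zariski tangent spaces, which can exceed the rank $2k$ of the Poisson bivector at $\pi(x)$, so $\Lambda_x$ need not be a symplectic subspace. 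This does not hurt you: since $\Lambda_x^\perp\cap(\Lambda_x^\perp)^\perp=\Lambda_x^\perp\cap\Lambda_x$ is the radical of $\omega|_{\Lambda_x}$, a maximal isotropic subspace of $\Lambda_x^\perp$ still has dimension $n-k$, and the count $w+2d\le 2k+2(n-k)=\dim X$ survives.

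The genuine gap is exactly where you place it, and the repair you sketch does not work. First, ``$\omega$ is a coboundary on a punctured neighbourhood of the fibre so its class dies upon restriction'' is a non sequitur: exactness of $\omega$ on $\pi^{-1}(U)\setminus\pi^{-1}(0)$ says nothing about $\omega$ restricted to $\pi^{-1}(0)$, which is precisely the set you removed, and $\omega$ is not a priori exact on $\pi^{-1}(U)$ itself --- showing that its class dies on the fibre \emph{is} the theorem. The working argument (as in \cite[2.20]{WierzbaWisniewski}) uses that symplectic singularities are rational, hence $R^{i}\pi_*\cO_X=0$ for $i>0$ with $Y$ affine, and then Hodge theory on a resolution $\tilde T$ of a component $T$ of the fibre: the form $\omega|_{\tilde T}$ is a closed holomorphic $2$-form on a smooth projective variety, so it lies in $H^0(\tilde T,\Omega^2)\subset H^2(\tilde T,\C)$ and is determined by the image of $[\omega]\in H^2(X,\C)$, which one shows has no $(2,0)$-part. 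Second, the reduction via a local product decomposition of $\pi$ (as opposed to of $Y$ alone) is itself a statement that in the literature is usually \emph{deduced from} semismallness or from twistor-deformation arguments, so invoking it here risks circularity; fortunately it is also unnecessary, since the Hodge-theoretic argument applies directly to a general fibre of $Z\ra W$. In short: correct strategy and correct linear algebra, but the actual content of the theorem --- isotropy of the fibres --- is not established by your sketch.
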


\begin{corollary}\label{iso-codim1}
  Any two symplectic resolutions $\pi_i : X_i\ra Y$, where $i=1,\ 2$,
  are isomorphic in codimension 1.
\end{corollary}

\begin{proof}
  By \ref{semis} every exceptional divisor of a symplectic resolution
  $\pi_i$ is mapped to a codimension 2 set in $Y$. On the other hand,
  any symplectic resolution of $Y$ is uniquely determined in
  codimension 2 as the resolution of the surface Du Val singularities.
\end{proof}

\begin{example}\label{HilbertChow}Let $S$ be a smooth surface (proper or
  not). Denote by $S^{(n)}$ the {\it symmetric product} of $S$, that
  is $S^{(n)} = S^n/\sigma_n$, where $\sigma_n$ is the symmetric group
  of peremutations of $n$ elements.  Let also $ Hilb^n(S)$ be the {\it
    Hilbert scheme} of $0$-cycles of degree $n$. A classical result
  (c.f.~\cite{Fogarty}) says that $Hilb^n(S)$ is smooth and that
  $\tau: Hilb^n(S) \ra S^{(n)}$ is a crepant resolution of
  singularities. We will call it a Hilb-Chow map.

  Suppose now that $S\ra S'$ is a resolution of a Du Val singularity
  which is of type $S'=\C^2/H$ with $H<SL(2,\C)$ a finite group. 
  Then the composition $Hilb^n(S) \ra
  S^{(n)}\ra (S')^{(n)}$ is a local symplectic contraction.

  We note that $(S')^{(n)}$ is a quotient singularity with respect to
  the action of the wreath product $H\wr\sigma_n=(H^n)\rtimes\sigma_n$
  (the group $\sigma_n$ permutes factors in $H^n=H^{\times n}$).
\end{example}

\subsection{Mori Dream Spaces}
\label{MDS}
Let us recall basic definitions regarding Mori Dream Spaces. For more
information we refer to \cite{HuKeel} or \cite{ADHL}.  Our definitions
are far from general but sufficient for our particular local set-up.
We assume that
\begin{enumerate}
\item $\pi:X \ra Y$ is a projective morphism of normal varieties with
  connected fibers, that is $\pi_*\cO_X=\cO_Y$ and $Y=\Spec A$ is
  affine,
\item $X$ is locally factorial and $\Pic(X/Y)=\Cl(X/Y)$ is a finitely
  generated abelian group so that $N^1(X/Y)=\Pic(X/Y)\otimes\Q$ is a
  finite dimensional vector space
\end{enumerate}

By $\Nef(X/Y)\subset N^1(X/Y)$ we understand the closure of the convex
cone spanned by the classes of relatively ample bundles, while by
$\Mov(X/Y)\subset N^1(X/Y)$ we understand the closure of the convex
cone spanned by the classes of linear systems which have no fixed
components.  That is, a class of a $\Q$-divisor $D$ is in $\Mov(X/Y)$
if the linear system $|mD|$ has no fixed component for $m\gg 0$.
Clearly $\Nef(X/Y)\subset \Mov(X/Y)$ and we call $\Nef(X/Y)$ the Mori
cone (or chamber) of $X$.  By the relative version of Kleiman's
criterion of ampleness the cone $\Nef(X/Y)$ is strictly convex.

The following is a version of \cite[Def.~1.10]{HuKeel}.

\begin{definition} \label{defMDS} In the above situation we say that
  $X$ is a Mori Dream Space (MDS) over $Y$ if in addition
  \begin{enumerate}
  \item the cone $\Nef(X/Y)$ is spanned by the classes of finitely
    many semi-ample line bundles:
  \item there is a finite collection of small $\Q$-factorial
    modifications (SQM) over $Y$, $f_i:X\ -\ra X_i$ such that $X_i
    \lra Y$ satisfies the above assumptions and $\Mov(X/Y)$ is the
    union of the strict transforms $(f_i)_*(\Nef(X_i/Y))$
  \end{enumerate}
\end{definition}

Here, since $X$ and $X_i$ are both $\Q$-factorial and isomorphic in
codimension 1, we can identify, via $f_i$, $N^1(X/Y)$ with $N^1(X_i/Y)$
and $\Mov(X/Y)$ with $\Mov(X_i/Y)$. By abuse we will just write
$N^1(X/Y)=N^1(X_i/Y)$ and $\Mov(X/Y)=\Mov(X_i/Y)$.

\begin{example}\label{MukaiFlop}
  Take the $\C^*$ action on $\C^r\times\C^r$ with coordinates $(x_i,y_j)$
  and weights $1$ for $x_i$'s and $-1$ for $y_j$'s. Using these
  weights we define a $\Z$-grading of the polynomial ring and write
  $\C[x_i,y_j] = \bigoplus_{m\in\Z}A_m$. The quotient
  $\widehat{Y}=\Spec A_0$ is a toric singularity which, in the
  language of toric geometry, is associated with a cone spanned by $r$
  vectors $e_i$ and $f_j$, in the lattice of rank $2r-1$, with one
  relation $\sum e_i=\sum f_j$. The result is the cone over Segre
  embedding of $\bP^{r-1}\times \bP^{r-1}$. Consider $A_+=
  \bigoplus_{m\geq 0} A_m$ and $A_-=\bigoplus_{m\leq 0}A_m$, and
  define two varieties over $\widehat{Y}$:
  $$\widehat{X}_\pm=\Proj_{A_0}A_\pm\lra \widehat{Y}$$
  Both, $\widehat{X}_+$ and $\widehat{X}_-$, are smooth because, as
  toric varieties, they are associated with two unimodular
  triangulations of the cone in question: one in which we omit
  consecutive $e_i$'s, the other in which we omit $f_j$'s.  The affine
  pieces of covering are of type $\Spec\C[x_i/x_k,x_ky_j]$, where
  $k=1,\dots, r$, for $\widehat{X}_+$ and similar for $\widehat{X}_-$.
  The two resolution $\widehat{X}_+ \rightarrow \widehat{Y}
  \leftarrow\widehat{X}_-$ form two sides of the so-called Atiyah
  flop.

  \par\medskip

  Consider an ideal $I=\left(\sum_i x_iy_i\right) \triangleleft
  \C[x_i,y_j]$ generated by a $\C^*$ invariant function (degree 0) and
  its respective counterparts $I_0\cap A_0\triangleleft A_0$,
  $I_+\triangleleft A_+$ and $I_-\triangleleft A_-$.  We set $Y=\Spec
  A_0/I_0$, $X_+=\Proj_{A_0}A_+/I_+$ and $X_-=\Proj_{A_0}A_-/I_-$ and
  call the resulting diagram ${X}_+\rightarrow {Y} \leftarrow{X}_-$
  {\bf Mukai flop}. By abuse, by the same name we will call a
  respective diagram which is locally isomorphic to the present one in
  the analytic or formal category. The variety $Y$ is symplectic since
  the form $\omega=\sum_i (dx_i \wedge dy_i)$ on $\C^r\times\C^r$
  descends to a symplectic form on $Y$. The varieties $X_\pm$ are its
  small symplectic resolutions and $\C[x_i,y_j]/I$ is their Cox ring,
  \cite{ADHL}.  We note that $\Spec(\C[x_i,y_j]/I)$ is the cone over
  the incidence variety of points and hyperplanes in $\bP^{r-1} \times
  (\bP^{r-1})^*$. Finally, we note that the movable cone $\Mov(X_\pm)$
  is the whole line $N^1(X_\pm)$, hence it is not strictly convex.


\end{example}

\section{Local symplectic contractions in dimension $4$.}

\subsection{MDS structure} \label{MDSstructure} In this section $\pi:
X \ra Y$ is a local symplectic contraction, as defined in
\ref{defSymContr}, and $\dim X = 4$.  By the semismall property (see
Theorem \ref{semis}), the fibers of $\pi$ have dimension less or equal
to $2$.  We will denote with $0$ the unique (up to shrinking $Y$ to a
smaller affine set) point such that $\dim \pi^{-1}(0) =2$. If $\pi$ is
divisorial then the general non trivial fiber has dimension $1$.  

We note that $\pi: X \ra Y$ satisfies the assumptions stated in
\ref{MDS}. In particular, because $R^i\pi_*\cO_X=0$ for $i>0$, it
follows that $N^1(X/Y)$ is a finite dimensional vector space. By
$N_1(X/Y)$ we denote the $\Q$ vector space of 1-cycles proper over
$Y$, modulo numerical equivalence (c.f.~\cite[Ex.~2.16]{KollarMori}).
Then $N_1(X/Y)$ and $N^1(X/Y)$ are dual via the intersection pairing.

We start by recalling the following theorem from
\cite[Thm.~1.1]{WierzbaWisniewski}.

\begin{theorem}\label{WW} Suppose that $\pi$ is small (i.e.~it does
  not contract any divisor). Then $\pi$ is locally analytically
  isomorphic to the collapsing of the zero section in the cotangent
  bundle of $\proj ^2$. Therefore $X$ admits a Mukai flop as described
  in example \ref{MukaiFlop}
\end{theorem}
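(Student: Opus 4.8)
The plan is to identify the exceptional locus of $\pi$ with a smooth Lagrangian $\proj^2$, to recognise a neighbourhood of it in $X$ as a neighbourhood of the zero section of $T^*\proj^2$, and then to read off the collapsing map and the Mukai flop as the $r=3$ instance of Example~\ref{MukaiFlop}. First I would check that the exceptional locus of $\pi$ is the single fibre $F:=\pi^{-1}(0)$, a surface. Since $\pi$ is an isomorphism over the locally factorial locus $Y_{reg}$, the non-isomorphism locus lies in $Y_{sing}$, which has even codimension in $Y$; a codimension-$2$ component is impossible, because there, by the codimension-$2$ structure of symplectic resolutions (\cite{Wierzba}), $\pi$ on a transverse slice would be the minimal resolution of a Du Val singularity and hence would contract a divisor, against smallness. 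So $Y_{sing}$ is a finite set of points, and since a rational curve $\ell\subset X$ with $K_X\cdot\ell=0$ always has $h^0(N_{\ell/X})\geq 1$ (a rank-$3$ bundle of degree $-2$ on $\proj^1$ has sections), no contracted curve is rigid, which forces the fibre over such a point to be $2$-dimensional and hence that point to be $0$. Thus $F=\pi^{-1}(0)$ is $2$-dimensional, connected ($\pi_*\cO_X=\cO_Y$), and equals the entire exceptional locus. By semismallness (Theorem~\ref{semis}), $\codim_Y\pi(F)=4=2\dim F$ makes $F$ a maximal cycle, so each irreducible component of $F$ is Lagrangian; in particular $\omega|_F=0$ and, over the smooth locus of $F$, $N_{F/X}\iso\Omega^1_F$.

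The heart of the proof is then to show $F\iso\proj^2$. As the fibre of a birational contraction to a point, $F$ is covered by $\pi$-contracted rational curves, and every curve on $F$ is contracted. For a general covering curve $\ell\iso\proj^1$ inside a component $S$ of $F$ with $\ell^2_S=a\geq 0$, the Lagrangian identification $N_{S/X}|_\ell\iso\Omega^1_S|_\ell$ together with the conormal sequence $0\to\cO(-a)\to\Omega^1_S|_\ell\to\cO(-2)\to 0$ on $\ell$ gives $N_{\ell/X}\iso\cO(a)\oplus\Omega^1_S|_\ell$; since $\ell$ is contracted its deformations all stay inside $F$, and a nonzero normal component in the $N_{S/X}$ direction would push $\ell$ off $F$, so $h^0(\Omega^1_S|_\ell)=0$ and therefore $a\geq 1$. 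Hence every rational curve on a component of $F$ has positive self-intersection there, which among smooth surfaces covered by rational curves singles out $\proj^2$. The remaining step --- which I expect to be the main obstacle --- is to rule out $F$ being reducible or non-normal, by playing the Lagrangian condition and the smoothness of the ambient $X$ against the geometry of the covering curves near the singular locus of $F$; granting this, $F$ is a smooth Lagrangian $\proj^2$ with $N_{F/X}\iso\Omega^1_{\proj^2}$.

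Finally, since $\proj^2\subset X$ is a smooth closed Lagrangian submanifold with conormal bundle $T_{\proj^2}$, a holomorphic Lagrangian neighbourhood theorem (the complex-analytic Moser--Weinstein argument) produces a biholomorphism --- carrying $\omega$ to the canonical symplectic form --- between a neighbourhood of $F$ in $X$ and a neighbourhood of the zero section in $T^*\proj^2$. Passing to affinisations on both sides identifies $\pi$, locally analytically near $F$, with the collapsing of the zero section of $T^*\proj^2$ onto the affine cone over the incidence variety of points and hyperplanes in $\proj^2\times(\proj^2)^{*}$; this is exactly the $r=3$ Mukai flop of Example~\ref{MukaiFlop}, and flopping the $\proj^2$ to the dual projective plane exhibits the Mukai flop on $X$. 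Because this last step is formal once $F$ has been identified, essentially all of the difficulty lies in the classification of $F$.
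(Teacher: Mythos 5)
You should first be aware that the paper does not prove this statement at all: it is quoted from \cite[Thm.~1.1]{WierzbaWisniewski}, and the proof occupies essentially that entire article. Measured against that proof, your proposal has two genuine gaps, one acknowledged and one not. The acknowledged one --- ruling out reducible, singular or non-normal $F$ and pinning each component down to a smooth Lagrangian $\proj^2$ with normal bundle $\Omega^1_{\proj^2}$ --- is not a residual technicality but the core of the theorem; in \cite{WierzbaWisniewski} it consumes most of the paper (a detailed study of the deformations of the contracted rational curves, of the normalizations of the components of the fibre, and of how components could meet). Even the positivity argument you do give is looser than it looks: the sequence $0\to\cO(a)\to N_{\ell/X}\to N_{S/X}|_\ell\to 0$ need not split; a nonzero section of $N_{S/X}|_\ell\iso\Omega^1_S|_\ell$ produces only a first-order deformation, which could be obstructed; and an actual deformation leaving $S$ could land in another component of $F$ rather than ``off $F$''. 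So ``every rational curve on $S$ has positive self-intersection'' is not yet established, and without smoothness and normality of $S$ it would not yet force $S\iso\proj^2$.

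The unacknowledged gap is the final step. There is no holomorphic Moser--Weinstein/Lagrangian neighbourhood theorem: the Moser trick requires bump functions and real-time flows, and already a Lagrangian elliptic curve in a holomorphic symplectic surface has non-isomorphic neighbourhoods (Arnold's non-linearizability of neighbourhoods of elliptic curves), so ``Lagrangian with conormal bundle $T_{\proj^2}$'' does not by itself yield an analytic tubular neighbourhood. The correct mechanism, and the one underlying \cite{WierzbaWisniewski}, is algebraic: one checks that the cohomological obstructions to extending the isomorphism of normal bundles through the infinitesimal neighbourhoods of $\proj^2$ vanish, so the \emph{formal} neighbourhood of $F$ in $X$ is isomorphic to that of the zero section in $T^*\proj^2$; one then invokes the formal principle for exceptional (contractible) subvarieties --- Grauert, Hironaka--Rossi, or Artin approximation --- to upgrade the formal isomorphism to an analytic one, using crucially that $F$ is contracted by $\pi$. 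With that substitution the conclusion, and the identification with the $r=3$ case of Example \ref{MukaiFlop}, is correct.
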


The above theorem, together with Matsuki's termination of
4-dimensional flops, see \cite{Matsuki}, is the key ingredient in the
proof of the following result. Similar results are
\cite[Thm.~1.2]{WierzbaWisniewski}, \cite[Thm.~4.1]{FuNamikawa}, and
in \cite{Wierzba-unpublished}, as well as in \cite{BurnsHuLuo}. The
classical references for the Minimal Model Program (MMP), which is the
framework for this argument, are \cite{KawamataMatsudaMatsuki} and
\cite{KollarMori}; the finitness argument is in \cite{KawamataMatsuki}.

\begin{theorem}\label{4dimMDS} Let $\pi: X \ra Y$ be a 4-dimensional local 
  symplectic contraction and let $\pi^{-1}(0)$ be its only
  2-dimensional fiber.  Then $X$ is a Mori Dream Space over $Y$.
  Moreover any SQM model of $X$ over $Y$ is smooth and any two of them
  are connected by a finite sequence of Mukai flops whose centers are
  over $0\in Y$. In particular, there are only finitely many non
  isomorphic (local) symplectic resolutions of $Y$,
  c.f.~\cite[Thm.~4.1]{FuNamikawa}.
\end{theorem}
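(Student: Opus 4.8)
The plan is to run a relative Minimal Model Program over $Y$, exploiting the fact that $K_X$ is trivial (the Proposition, since $\pi$ is symplectic), so that every step is $K$-trivial and is therefore a \emph{flop} rather than a divisorial contraction or a genuine flip. Fix a movable $\bQ$-divisor $D$ on $X$; for $0<\epsilon\ll 1$ the relative $(K_X+\epsilon D)$-MMP over $Y$ coincides with the $D$-MMP. If $D$ is already nef, then, $(X,0)$ being canonical with $K_X\equiv 0$ and every nonzero nef class on $X$ being big over the affine base $Y$, the relative Base-Point-Free theorem shows $D$ is semiample over $Y$; otherwise the Cone Theorem produces a $D$-negative extremal ray $R$ with $K_X\cdot R=0$ and its contraction $g:X\ra Z$ over $Y$. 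First I would check that $g$ is small: were it divisorial, contracting a divisor $E$, a curve $C$ spanning $R$ inside a fibre of $g|_E$ would satisfy $D\cdot C<0$, contradicting that $|mD|$ has no fixed component (some member of $|mD|$ avoids $E$). Hence $g$ is a flopping contraction; localizing $Y$ around its centre, $g$ is a small $4$-dimensional local symplectic resolution, so by Theorem \ref{WW} it is analytically the collapse of the zero section of the cotangent bundle of $\bP^2$. Thus $\mathrm{Exc}(g)\iso\bP^2$ maps to a single point of $Y$; since this $\bP^2$ lies in a fibre of $\pi$ and $\dim\pi^{-1}(q)\le 1$ for $q\ne 0$, that point is $0$, and the $D$-flop of $g$ is a Mukai flop as in Example \ref{MukaiFlop} (the $r=3$ case). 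The flopped variety $X'$ is again smooth (both sides of a Mukai flop are smooth, Example \ref{MukaiFlop}), is symplectic (the form extends across the codimension-$2$ centre), and satisfies the hypotheses of Definition \ref{defMDS}, so the MMP continues within our class.

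Next I would invoke Matsuki's termination of $4$-dimensional flops, \cite{Matsuki}, to conclude that the $D$-MMP with scaling terminates, yielding an SQM $X\dashrightarrow X_D$ over $Y$ on which the strict transform of $D$ is nef, hence semiample. Applying this to an ample class shows $\Nef(X/Y)$ is the affine hull of finitely many semiample line bundles; running it for arbitrary movable $D$ shows $\Mov(X/Y)=\bigcup_i f_i^*\Nef(X_i/Y)$. For the finiteness statements I would argue that the flopping contractions issuing from a fixed model correspond to (codimension-one) faces of its nef cone and are finite in number by the Cone Theorem, that by the previous paragraph every flop between two SQMs is a Mukai flop centred over $0$, and that by termination there is no infinite chain of such flops; hence the graph with vertices the SQM models of $X$ over $Y$ and edges the Mukai flops is connected and finite. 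This establishes that $X$ is a Mori Dream Space over $Y$, that all its SQM models are smooth, and that any two of them are joined by finitely many Mukai flops with centres over $0$. Finally, any symplectic resolution $\pi':X'\ra Y$ is crepant over $Y$ with $X'$ smooth, so $X\dashrightarrow X'$ is a crepant birational map between terminal varieties, hence small, i.e.\ an SQM of $X$; therefore the symplectic resolutions of $Y$ form a subset of the finite list of SQM models and are pairwise connected by Mukai flops.

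I expect the decisive difficulty to be the finiteness of the chamber decomposition, equivalently of the set of SQM models. Existence of the required flops and the fact that the flopped variety stays smooth and symplectic are handed to us by Theorem \ref{WW} together with Example \ref{MukaiFlop}; but termination of the MMP sequences — and hence the finiteness of the flop graph — rests entirely on Matsuki's theorem on $4$-fold flops, and care is needed to see that the $D$-MMP with scaling only produces flops of this bounded (Mukai) type and never stalls. A secondary technical point is the verification that the extremal contractions encountered are small rather than divisorial, which uses the defining property of movable divisors together with Theorem \ref{WW} and the codimension-$2$ description of \cite{Wierzba}.
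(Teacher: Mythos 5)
Your proposal is correct and follows essentially the same route as the paper: run the relative $D$-MMP (trivially $K$-trivial), use movability to rule out divisorial contractions, identify the flopping contractions via Theorem \ref{WW} as Mukai flops of $\bP^2$'s over $0$ preserving smoothness, and terminate via Matsuki's theorem, with the base-point-free and rationality theorems supplying the nef-cone statements. The extra details you supply (why the contraction is small, why the centre sits over $0$, and why every symplectic resolution is itself an SQM model) are points the paper's proof leaves implicit but are consistent with its argument.
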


\begin{proof} We note that, by the (relative) non-vanishing theorem
  (Theorem 3.4 in \cite{KollarMori}), the linear and numerical
  equivalence over $Y$ are the same hence $\Pic(X/Y)$ is a finitely
  generated free abelian group (a lattice).  Note also that, since we
  are in a relative situation over $Y$ via a birational map $\pi$, we
  can assume that a $\pi$-nef line bundle is also $\pi$-big: in fact a
  nef bundle plus a big one is big and in our situation the trivial
  bundle is $\pi$-big.  By the (relative) Kawamata-Shokurov
  basepoint-free theorem (Theorem 3.24 in \cite{KollarMori}) every nef
  divisor on $X$ (we drop from now on the suffix $\pi$) is also
  semiample. On the other hand, the (relative) rationality theorem (we
  use Theorem 3.25(2) in \cite{KollarMori}, we choose an effective
  $\bQ$-divisor $\Delta$ such that $-\Delta$ is $\pi$-ample which is
  possible by Kodaira's lemma) asserts that $\Nef(X/Y)$ is rational
  polyhedral.

  Next we claim that $X$ satisfies the second property of definition
  \ref{defMDS}. To achieve this, first we prove that for any $\pi_i:
  X_i\ra Y$ which is a small $\Q$-factorial modification of $X$, there
  exists a finite number of Mukai flops $X-\ra\cdots-\ra X_i$ which
  modifies $X$ to $X_i$. For this, we take $D_i$ which ample on $X_i$
  and its strict transform is a movable divisor on $X$. We may assume
  that $X$ is not isomorphic (over $Y$) to $X_i$ hence $D_i$ is not
  nef. We look for extremal rays which have negative intersection with
  it.  They have to be associated with small contractions because they
  have to be in the base point locus of the divisor. By theorem
  \ref{WW} these are contractions of a $\proj ^2$ which can be flopped
  (via a Mukai flop) so that the result remains smooth.  The process
  has to finish by the "termination of flops" (relative to the chosen
  movable divisor), which is the main result in \cite{Matsuki}.
  Therefore, after a finite number of flops, over a variety $X_i'$ the
  strict transform of $D_i$ becomes a nef divisor, which is semiample
  (by the basepoint-free Theorem 3.24 in \cite{KollarMori}). The
  induced regular morphism $$X_i'\ra\Proj_Y(\bigoplus_{m\geq
    0}\Gamma(X_i',\cO(mD_i)))=X_i$$ is in fact an isomorphism because
  it induces an isomorphism $N^1(X_i'/Y)=N^1(X_i/Y)$.
  
  It remains to prove that the subdivision of ${\Mov}(X/Y)$ into the
  nef subcones of different SQM's is finite. The argument is the same
  as in the proof of the main theorem of \cite{KawamataMatsuki},
  pp.~596---597, where we refer the reader for details. Namely, we
  choose an effective $\bQ$ divisor $\Delta$ such that $-\Delta$ is
  $\pi$-ample and the pair $(X,\Delta)$ is Kawamata log-terminal, or
  klt. In fact the pair $(X_i,\Delta_i)$ is klt for any small
  $\bQ$-factorial modification of $X$, $\pi_i: X_i\ra Y$ with
  $\Delta_i$ the strict transform of $\Delta$. By Kawamata rationality
  theorem, see \cite[Thm.~4-1-1]{KawamataMatsudaMatsuki} or
  \cite[Thm.~3.5]{KollarMori}, the nef threshold for any ample divisor
  on $X_i$ has a bounded denominator which is impossible if the number
  of Mori chambers in $\Mov(X/Y)$ is infinite.
\end{proof}

\subsection{Flopping classes}\label{flopping-clases}

The following theorem provides a somewhat more refined description of 
the division of the cone $\Mov(X/Y)$.
 
\begin{theorem}\label{hyperplane}
  Let $\pi: X\ra Y$ be a local 4-dimensional symplectic
  contraction. The subdivision of ${\Mov}(X/Y)$ into the nef subcones
  of different SQM models is obtained by cutting $\Mov(X/Y)$ with
  hyperplanes. That is, the union of the interiors of nef cones of all
  SQM models of $X$ is equal to ${\Mov}(X/Y) \setminus \bigcup
  \lambda_i^\perp $, where $\lambda_i$'s are classes in
  $N_1(X/Y)$. Moreover the number of these hyperplanes is finite as
  the number of the chambers is finite as well.
\end{theorem}

The $\lambda_i$'s in the above theorem are determined up to
multiplicity and they will be called the {\em flopping classes} of
$X$. We think about $\lambda_i$'s as vectors in the dual of
$N^1(X/Y)$, which is $N_1(X/Y)$, supporting hyperplanes
$\lambda_i^\perp=\{v\in N^1(X/Y): \lambda\cdot v=0\}$. We assume that
$\lambda_i^\perp$ have non-emty intersection with the interior of the
cone $\Mov(X/Y)$. If $\pi': X'\ra Y$ is a small $\Q$-factorial
modification of $X$ then, via the identification $N^1(X/Y)=N^1(X'/Y)$,
the hyperplanes $\lambda_i^\perp$'s are well defined for $X'$ and we
call them the flopping classes of $X'$ as well. We note, however, that
there is no natural identification of 1-cycles in $X$ and $X'$.

\medskip
\begin{proof} Take a cone $F$ which is a face the nef cone of some
  SQM, say $X$, in the interior of ${\Mov}(X/Y)$. Thus the exceptional
  locus of the contraction of $X$ associated with the face $F$
  consists of a number of disjoint copies of $\bP^2$. This statement
  requires not only the theorem \ref{WW} but also the argument proving
  that $\bP^2$'s are disjoint; the latter is in the proof of (3.2) of
  \cite{WierzbaWisniewski}.

  Let $W=\lambda^\perp\cap\Nef(X/Y)\supset F$ be a facet (maximal
  dimensional face) of $\Nef(X/Y)$ which is also a wall of the
  subdivision of $\Mov(X/Y)$ into nef chambers associated to different
  SQM's. The class $\lambda\in N_1(X/Y)$ can be realized by rational
  curves contracted by an extremal contraction of $X$ which factors
  the contraction associated to $F$.

  If $W'$ is another facet of $\Nef(X/Y)$ containing $F$ then loci of
  curves determining $W$ and $W'$ are disjoint. Thus the flop $X-\ra
  X'$ with respect to the wall $W'$ does not affect curves whose class
  is $\lambda$. That is, $W=\lambda^\perp\subset N^1(X'/Y)$ is
  determined by the class of a curve. This will remain true for any
  further flop associated to any wall containing the cone $F$.  Thus,
  as a dividing wall in $\Mov(X/Y)$, $W=\lambda^\perp$ extends to a
  hyperplane around $F$.
\end{proof}

One might expect that the subdivision of ${\Mov}(X/Y)$ by the flopping
clases can be related to the 2-dimensional components of the central
fiber $\pi^{-1}(0)$ of the 4-dimensional symplectic contraction $\pi:
X\ra Y$.  We note that, if $\pi':X'\ra Y$ is obtained from $X$ via a
Mukai flop $X -\ra X'$, then there is a natural bijection between
2-dimensional components of $\pi^{-1}(0)$ and of
$\pi'^{-1}(0)$. Indeed, a Mukai flop exchanges a number of $\bP^2$'s
into their "opposite" $\bP^2$'s, while on the other components it is a
composition of blow-ups and blow-downs. It is not clear however if,
after a sequence of Mukai flops, a given component of $\pi^{-1}(0)$
may become the locus of a different flopping classes.

\subsection{Essential curves}
The following definition of essential curves is a simplified version
of the one introduced in \cite{AltmannWisniewski}, suitable for the
present set-up.

\begin{definition} 
  Let $\pi: X\ra Y$ be a 4-dimensional local symplectic contraction
  with the unique 2-dimensional fiber $\pi^{-1}(0)$. Recall that by
  $N_1(X/Y)$ we denote the $\Q$-vector space of 1-cycles proper over
  $Y$. We define $\Ess(X/Y)$ as the convex cone spanned by the classes
  of curves which are not contained in $\pi^{-1}(0)$. Classes of
  curves in $\Ess(X/Y)$ are called {\it essential curves}.
\end{definition}

\begin{theorem}\label{dualityMovEss}
  {\rm (c.f.~\cite{AltmannWisniewski})} \label{Mov} The cones
  $\Mov(X/Y)$ and $\Ess(X/Y)$ are dual in terms of the intersection
  product of $N^1(X/Y)$ and $N_1(X/Y)$, that is $\Mov(X/Y) =
  \Ess(X/Y)^\vee$.
\end{theorem}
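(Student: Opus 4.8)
The plan is to establish the two inclusions $\Mov(X/Y)\subseteq\Ess(X/Y)^{\vee}$ and $\Ess(X/Y)^{\vee}\subseteq\Mov(X/Y)$ separately, leaning throughout on the Mori Dream Space structure of Theorem \ref{4dimMDS}. Two remarks will be used repeatedly. First, since $Y$ is affine, every curve proper over $Y$ is contracted by $\pi$ to a single point, so an essential curve is exactly a curve lying over $Y\setminus\{0\}$. Second, by Theorem \ref{4dimMDS} any two SQM models of $X$ over $Y$ are linked by Mukai flops with centres over $0$; each such flop is an isomorphism over $Y\setminus\{0\}$, hence maps essential curves to essential curves, and, being an isomorphism in codimension one, preserves their numerical classes. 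Thus $\Ess(X_i/Y)=\Ess(X/Y)$ under the canonical identification $N_1(X/Y)=N_1(X_i/Y)$ for every SQM model $X_i$.

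For $\Mov(X/Y)\subseteq\Ess(X/Y)^{\vee}$ I would take a movable $\Q$-divisor $D$ and an irreducible essential curve $C$ and show $D\cdot C\ge 0$. By Theorem \ref{4dimMDS} a finite chain of Mukai flops $X\dashrightarrow X'$ with centres over $0$ turns the strict transform $D'$ into a nef, hence semiample, divisor; on the open set $U=\pi^{-1}(Y\setminus\{0\})$, over which the flops are isomorphisms, $D$ and $D'$ agree, so $|mD|$ restricted to $U$ is base point free for suitable $m$. As $X$ is normal and $\codim(X\setminus U)=\codim\pi^{-1}(0)=2$, the restriction map on global sections is bijective, whence the base locus of $|mD|$ is contained in $\pi^{-1}(0)$. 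Since $C$ is not contained in $\pi^{-1}(0)$, some member of $|mD|$ avoids $C$ and $mD\cdot C\ge 0$. Because $\Mov(X/Y)$ is spanned by movable $\Q$-divisor classes and $\Ess(X/Y)^{\vee}$ is closed, the inclusion follows.

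For the reverse inclusion I would analyse the facets of $\Mov(X/Y)$. By the MDS property $\Mov(X/Y)=\bigcup_i f_i^{*}\Nef(X_i/Y)$ is a finite union of rational polyhedral cones; being convex, it is itself a rational polyhedral cone. Let $p$ be a general point of a facet $\Phi$ of $\Mov(X/Y)$. Then $p$ lies on a facet of some chamber $f_{i_0}^{*}\Nef(X_{i_0}/Y)$ dual to an extremal ray $R$ of $\overline{\NE}(X_{i_0}/Y)$, and the supporting hyperplane of $\Phi$ at $p$ is $R^{\perp}$. The contraction of $R$ is not of fibre type, for then its target $Z$ would dominate $Y$ yet satisfy $\dim Z<\dim Y=4$; and it is not small, since a small $K$-trivial contraction is a Mukai flop by Theorem \ref{WW}, whose flop is another model $X_{i_1}$ whose nef cone, together with that of $X_{i_0}$, covers a neighbourhood of $p$, contradicting $p\in\partial\Mov(X/Y)$. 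Hence the contraction of $R$ is divisorial, with exceptional divisor $E$ of dimension $3$, swept out by curves of class on $R$. As $\dim E=3>2=\dim\pi_{i_0}^{-1}(0)$, a curve of this family through a point of $E$ outside $\pi_{i_0}^{-1}(0)$ is essential, so $R$ is spanned by a class in $\Ess(X_{i_0}/Y)=\Ess(X/Y)$. Thus every facet of $\Mov(X/Y)$ lies on $\ell^{\perp}$ for some essential class $\ell$, with $\Mov(X/Y)$ on the side $\langle\,\cdot\,,\ell\rangle\ge 0$; writing $\Mov(X/Y)$ as the intersection of the half-spaces cut out by its facets yields $\Ess(X/Y)^{\vee}\subseteq\Mov(X/Y)$, completing the proof.

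I expect the reverse inclusion to be the main obstacle: it forces one to control the boundary of the movable cone, which relies on the classification of the extremal contractions of all SQM models (Theorems \ref{WW} and \ref{4dimMDS}) together with the dimension count in which the threefold $E$ cannot be swallowed by the surface $\pi^{-1}(0)$ — this is where the four-dimensionality is genuinely used. The passage between different SQM models, where one checks that $\Ess$ is flop-invariant, is the other delicate ingredient, handled by the observation that essential curves live over $Y\setminus\{0\}$, away from the flopping loci.
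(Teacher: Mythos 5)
Your proof is correct and follows essentially the same route as the paper: the easy inclusion via the base locus of a movable system being confined to $\pi^{-1}(0)$, and the reverse inclusion by showing that every facet of $\Mov(X/Y)$ is cut out by an essential curve, using that the extremal contraction supported on such a facet must be divisorial (else a Mukai flop would carry the chamber decomposition past the facet). The only cosmetic difference is that you justify the containment of the base locus in $\pi^{-1}(0)$ by flopping to a semiample model and extending sections across a codimension-two locus, where the paper invokes the absence of fixed components directly.
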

\begin{proof} First we note that if $D$ is a movable divisor on $X$,
  or if $|mD|$ has no fixed component for $m\gg 0$, then the base
  point locus of $|mD|$ is contained in $\pi^{-1}(0)$. This yields
the obvious inclusion $\Mov(X/Y)\subseteq\Ess(X/Y)^\vee$. Moreover,
since by \ref{4dimMDS} any two symplectic resolutions of $Y$ are
connected by a sequence of flops in centers over $0\in Y$, it follows
that the intersection of divisors with curves outside $\pi^{-1}(0)$
does not depend on the choice of the resolution (or SQM of $X$). That
is, if $C\subset X\setminus \pi^{-1}(0)$ is a curve proper over $Y$,
$\pi': X'\ra Y$ another symplectic resolution and $D'$, the
strict transform of $D$, then $D\cdot C=D'\cdot C$.
 
Now assume by contradiction that $\Mov(X/Y)\ne \Ess(X/Y)^\vee$.  Let
$F$ be a facet, i.e. a codimension 1 face of $\Mov(X/Y)$. Since $X$ is MDS,
we can take a resolution $\pi': X'\ra Y$, for which $F\cap\Nef(X'/Y)$
is an extremal face of $\Nef(X'/Y)$. The relative elementary
contraction $X'\ra Y'\ra Y$ of the face $F\cap\Nef(X'/Y)$ is
divisorial. Indeed, if it is not, then after a flop we would get
another $X''\ra Y$ whose nef cone $\Nef(X''/Y)$ is on the other side
of the face $F\cap\Nef(X'/Y)$, contradicting the fact that $F$ is an
extremal face of $\Mov(X/Y)$. Now we can choose a curve $C\subset
X'\setminus(\pi')^{-1}(0)=X\setminus\pi^{-1}(0)$ contracted by $X'\ra
Y'$ and we get the inclusion $F\subset C^\perp$. Thus every facet of
$\Mov(X/Y)$ is supported by a curve in $\Ess(X/Y)$, hence
$\Mov(X/Y)^\vee\subseteq\Ess(X/Y)$ and we are done.
\end{proof}

From the proof it follows that the above result remains true also if
$\pi: X\ra Y$ is a higher dimensional symplectic contraction and $X$
is MDS over $Y$, and essential curves are defined as those whose loci
are in codimension 1.

We note that, since the map $\pi$ is assumed to be projective, the cone
$\Nef(X/Y)$, hence also the cone $\Mov(X/Y)$, is of maximal
dimension. On the other hand we have the following observation.

\begin{proposition}\label{purely-divisorial}
  Let $\pi: X \ra Y$ be as in Theorem \ref{4dimMDS}
  (or, more generally, suppose that $X$ is a MDS over $Y$).\\
  The following conditions are equivalent.
  \begin{enumerate}
  \item the cone $\Ess(X/Y)$ is of maximal dimension,
  \item the cone $\Mov(X/Y)$ is strictly convex, that is it contains
    no linear subspace of positive dimension,
  \item the classes of components of fibers of $\pi$ outside
    $\pi^{-1}(0)$ generate $N_1(X/Y)$,
  \item the classes of exceptional divisors generate $N^1(X/Y)$.
\end{enumerate}
\end{proposition}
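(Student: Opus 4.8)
The plan is to prove the four conditions equivalent by establishing a cycle of implications, exploiting the duality $\Mov(X/Y)=\Ess(X/Y)^\vee$ from Theorem \ref{dualityMovEss} together with the MDS structure. The equivalence $(1)\Leftrightarrow(2)$ is pure convex geometry: for dual cones $\sigma,\sigma^\vee$ in finite-dimensional vector spaces in perfect pairing, $\sigma$ is full-dimensional if and only if $\sigma^\vee$ is strictly convex (the maximal linear subspace contained in $\sigma^\vee$ is the annihilator of the span of $\sigma$). Since $\Mov(X/Y)=\Ess(X/Y)^\vee$ and, by taking duals again using that $\Ess(X/Y)$ is a rational polyhedral cone (it is spanned by finitely many curve classes appearing in fibers, by the MDS/flop picture), we also get $\Ess(X/Y)=\Mov(X/Y)^\vee$, so the span of $\Ess$ is the annihilator of the maximal subspace of $\Mov$. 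This gives $(1)\Leftrightarrow(2)$ directly.

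Next I would handle $(3)\Rightarrow(1)$, which is immediate: the curves in fibers of $\pi$ outside $\pi^{-1}(0)$ are essential curves, so if their classes span $N_1(X/Y)$ then $\Ess(X/Y)$ contains a full-dimensional cone and hence is full-dimensional. For the converse $(1)\Rightarrow(3)$, I would argue that $\Ess(X/Y)$, as a rational polyhedral cone, is generated by its extremal rays, and each extremal ray of $\Ess(X/Y)$ is supported on a facet of $\Mov(X/Y)$; by the argument already used in the proof of Theorem \ref{dualityMovEss}, each such facet is realized, after passing to a suitable SQM model $X'$, as a divisorial elementary contraction $X'\to Y'\to Y$, and the curve class generating that extremal ray is the class of a component of a fiber of this contraction lying outside $(\pi')^{-1}(0)=\pi^{-1}(0)$. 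Since such contracted curves are precisely components of fibers of $\pi$ away from $\pi^{-1}(0)$ (invariance of intersection numbers across SQMs, as noted after Theorem \ref{dualityMovEss}, lets us transport classes back to $X$), full-dimensionality of $\Ess$ forces these classes to span $N_1(X/Y)$.

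For $(4)$, I would relate exceptional divisors to essential curves via intersection theory. A prime divisor $E\subset X$ is exceptional (i.e. $\pi$-exceptional) iff it is contracted by $\pi$, which, since $\pi$ is semismall and the only 2-dimensional fiber sits over $0$, happens iff $E$ meets a curve $C$ outside $\pi^{-1}(0)$ with $E\cdot C<0$ — equivalently $E$ is not nef against every essential curve. The cleanest route: the classes of exceptional divisors span $N^1(X/Y)$ iff no nonzero element of $N_1(X/Y)$ is orthogonal to all of them, and one checks that a curve class $\gamma$ is orthogonal to every exceptional divisor precisely when $\gamma$ lies in the maximal linear subspace of $\Mov(X/Y)^\vee=\Ess(X/Y)$... actually the correspondence I want is: the span of the exceptional divisors is the orthogonal complement of $\{\gamma\in N_1(X/Y): D\cdot\gamma=0 \text{ for all non-exceptional }D\}$, and using that non-exceptional divisors are pulled back from $Y$ hence trivial against any $\pi$-vertical curve, one sees the span of exceptional divisors has codimension equal to $\dim N^1(X/Y)-\dim(\text{span of }\pi\text{-proper curve classes})$. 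So $(4)$ holds iff the $\pi$-proper curve classes span $N_1(X/Y)$; combined with semismallness (curves in $\pi^{-1}(0)$ are limits of, or numerically dependent on, the one-dimensional fibers outside — here I would invoke Theorem \ref{semis} and the structure of fibers), this is equivalent to $(3)$.

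The main obstacle I expect is $(1)\Rightarrow(3)$ and the analysis of $(4)$: specifically, making rigorous the claim that every extremal ray of $\Ess(X/Y)$ genuinely comes from a component of a fiber of $\pi$ (not merely a curve with the right numerical class on some SQM), and controlling the contribution of curves inside the special fiber $\pi^{-1}(0)$ to $N_1(X/Y)$. This is where semismallness and the classification Theorem \ref{WW}/\ref{4dimMDS} of SQM models and their flopping curves must be used carefully; the convex-geometric equivalences $(1)\Leftrightarrow(2)$ and the easy implication $(3)\Rightarrow(1)$ are routine by comparison.
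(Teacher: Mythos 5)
Your handling of $(1)\Leftrightarrow(2)$ is fine and matches the paper (a formal duality argument via \ref{dualityMovEss}), and $(1)\Leftrightarrow(3)$ is actually much easier than you make it: since $Y$ is affine, every irreducible curve proper over $Y$ is contracted by $\pi$ to a point, and the fibers away from $0$ are one-dimensional by semismallness, so the curves generating $\Ess(X/Y)$ in its definition \emph{are} precisely the components of fibers outside $\pi^{-1}(0)$. Thus $(1)$ and $(3)$ both just say that the generators of $\Ess(X/Y)$ span $N_1(X/Y)$; no detour through extremal rays, facets of $\Mov$, or SQM models is needed.

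The genuine gap is in your treatment of $(4)$. Two of the claims you lean on are false in general, and in fact fail exactly in the situations the proposition is designed to detect. First, ``non-exceptional divisors are pulled back from $Y$'' requires $Y$ to be $\Q$-factorial; for a small contraction (the flopped $\bP^2$ of Theorem \ref{WW}) there are no exceptional divisors at all, yet $N^1(X/Y)$ is nonzero and is generated by a non-exceptional divisor whose image in $Y$ is a non-$\Q$-Cartier Weil divisor --- this is precisely a case where $(4)$ fails. Relatedly, ``the span of $\pi$-proper curve classes'' is all of $N_1(X/Y)$ by definition, so your codimension formula degenerates. Second, the parenthetical ``curves in $\pi^{-1}(0)$ are limits of, or numerically dependent on, the one-dimensional fibers outside'' is not a consequence of semismallness; it is essentially equivalent to condition $(3)$ itself (again the small-contraction case is a counterexample, where the line in $\bP^2$ is numerically independent of the empty set of essential curves), so invoking it here is circular. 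The ingredient you are missing is the one the paper actually uses: by Wierzba's structure theorem (\ref{cone-structure}, cf.\ \ref{root-systems}), the intersection matrix $(e_\alpha\cdot E_\beta)$ between the exceptional divisors $E_\beta$ and the classes $e_\alpha$ of components of their general fibers is minus a direct sum of Cartan-type matrices, hence non-degenerate. This forces $\dim\operatorname{span}\{E_\beta\}=\dim\operatorname{span}\{e_\alpha\}=$ (number of exceptional divisors), from which $(3)\Leftrightarrow(4)$ is immediate. Without this non-degeneracy your argument for the $(4)$ equivalence does not close.
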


\begin{proof}
  In view of \ref{dualityMovEss} the equivalence of (1) and (2) is
  formal. Also (1) is equivalent to (3) by the definition of the cone
  $\Ess(X/Y)$. Finally, the intersection of classes of exceptional
  divisors with curves contained in general fibers of their
  contraction is a non-degenerate pairing,
  c.f.~\ref{root-systems}. Hence (3) is equivalent to (4).
\end{proof}

\section{Root systems and the structure of  $\Mov(X/Y)$.}
\subsection{Root systems}\label{root-systems}
We recall some generalities regarding root systems: a standard
reference for this part is \cite{BourbakiLie}. Consider a (finite
dimensional) real vector space $V$ with a euclidean product, root
lattice $\Lambda_R$ and weight lattice $\Lambda_W$, $\Lambda_W\supset\Lambda_R$. We
distinguish the set of simple (positive) roots denoted by $\{e_i\}$
and their opposite $E_i=-e_i$.  Note that the lattice $\Lambda_R$ is
spanned by $E_i$'s or $e_i$'s while its $\bZ$-dual is $\Lambda_W$. The
Cartan matrix describes the intersection $(e_i\cdot e_j)=-(e_i\cdot
E_j)$ which is also reflected in the respective Dynkin diagram.  Any
such root system is a (direct) sum of irreducible ones coming from the
infinite series $\bA_n, \bB_n, \bC_n, \bD_n$ and also $\bE_6,\bE_7,
\bE_8$ as well as $\bF_4$ and $\bG_2$. By abuse we denote by the respective letter both the Cartan matrix
and the associated root system.

The Cartan matrix of each of the systems $\bA_n$, $\bD_n$ and $\bE_6$,
$\bE_7$, $\bE_8$ has $2$ at the diagonal and $0$ or $-1$ outside the
diagonal. Given a group $H$ of automorphisms of any of the
$\bA-\bD-\bE$ Dynkin diagrams we can produce a matrix of intersections
of classes of orbits of the action. The entries are intersections of
an element of the orbit with the sum of all elements in the orbit,
that is: $(e_i\cdot\sum_{e_k\in H(e_j)}e_k)$. For example: the
involution identifying two short legs of the $\bD_n$ diagram
$\begin{xy}<10pt,0pt>: (-0.7,0.7)*={\bullet} ; (0,0)*={\bullet}="0"
  **@{-}, (-0.7,-0.7)*={\bullet} ; "0" **@{-}, "0" ;
  (1,0)*={\bullet}="1" **@{-},
\end{xy}\cdots$ described by the $n\times n$ Cartan matrix 
$$\left(\begin{array}{rrrrr}
2&0&-1&0&\cdots\\
0&2&-1&0&\cdots\\
-1&-1&2&-1&\cdots\\
0&0&-1&2&\cdots\\
\cdots&\cdots&\cdots&\cdots&
\end{array}\right)
$$
yields the $(n-1)\times(n-1)$ matrix associated with the system
$\bC_{n-1}$; we write $\bD_n/\bZ_2=\bC_{n-1}$:
$$\left(\begin{array}{rrrr}
2&-1&0&\cdots\\
-2&2&-1&\cdots\\
0&-1&2&\cdots\\
\cdots&\cdots&\cdots&
\end{array}\right)
$$

Similarly, we verify that $\bA_{2n+1}/\bZ_2=\bB_n$,
$\bD_{n}/\bZ_2=\bC_{n-1}$, $\bE_6/\bZ_2 = \bF_4$ and $\bD_4/\sigma_3 =
\bG_2$. The geometry behind these equalities is explained in Section
\ref{nilpotent-cone}.

Let ${\mathbb U}_n$ denote the following $n\times n$  matrix
\begin{eq}\label{U-matrix}
\left(\begin{array}{rrrrr}
1&-1&0&0&\cdots\\
-1&2&-1&0&\cdots\\
0&-1&2&-1&\cdots\\
0&0&-1&2&\cdots\\
\cdots&\cdots&\cdots&\cdots&\cdots
\end{array}\right)
\end{eq}
The matrix ${\mathbb U}_n$ is obtained from the root system $\bA_{2n}$
modulo involution of the respective Dynkin diagram. Here $\mathbb U$
stands for unreasonable (or un-necessary).

\subsection{The structure of $\Mov$ and $\Ess$}
The following is a combination of a result of Wierzba
\cite[1.3]{Wierzba} and Theorem \ref{dualityMovEss}.

\begin{theorem}\label{cone-structure}
  Let $\pi: X\ra Y$ be a local symplectic contraction (arbitrary
  dimension). Suppose that $N^1(X/Y)$ is generated by the classes of
  codimension 1 components of the exceptional set of $\pi$, we call them $E_\alpha$;
  that is we are in situation described in Proposition \ref{purely-divisorial}. Let
  $e_\alpha$ denote the numerical equivalence class of an irreducible
  component of a general fiber of $\pi_{| E_\alpha}$. Then the
  following holds:
  \begin{itemize}
  \item The classes of $E_\alpha$ are linearly independent so they
    form a basis of $N^1(X/Y)$.
  \item The opposite of the intersection matrix $-(e_\alpha\cdot
    E_\beta)$ is a direct sum of Cartan matrices of type associated with
    simple algebraic Lie groups (or algebras), and possibly, matrices
    of type ${\mathbb U}_n$.
  \item If moreover $X$ is MDS over $Y$ then $\Mov(X/Y)$ is dual, in
    terms of the intersection of $N^1(X/Y)$ and $N_1(X/Y)$, to the
    cone spanned by the classes of $e_\alpha$. In particular
    $\Mov(X/Y)$ is simplicial.
  \end{itemize}
\end{theorem}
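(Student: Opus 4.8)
The plan is to split the three assertions into two groups. The linear independence of the $E_\alpha$ and the shape of the matrix $-(e_\alpha\cdot E_\beta)$ are local in nature, and I would deduce them from Wierzba's description of a symplectic contraction in codimension $2$, \cite[1.3]{Wierzba}; the description of $\Mov(X/Y)$ would then follow by dualizing, using Theorem \ref{dualityMovEss}. First I would fix notation. Since $\pi$ is semismall (Theorem \ref{semis}) and $E_\alpha$ has codimension $1$, its image $Z_\alpha=\pi(E_\alpha)$ has codimension at most $2$; it cannot have codimension $1$, for then $\pi|_{E_\alpha}$ would be generically finite and $\pi$ would fail to be birational near the generic point of $E_\alpha$. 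Hence $\codim Z_\alpha=2$, the general fibre of $E_\alpha\to Z_\alpha$ is a curve, and $e_\alpha$ is the class of one of its irreducible components; these components are permuted by the monodromy of the family over $Z_\alpha$, so they are numerically equivalent and $e_\alpha$ is well defined. I would then group the $E_\alpha$ by their images, writing $Z_1,\dots,Z_s$ for the distinct codimension-$2$ subvarieties that occur and $A_k=\{\alpha: Z_\alpha=Z_k\}$.

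For the second bullet, I would first note that $-(e_\alpha\cdot E_\beta)$ is block-diagonal along the partition $\{A_k\}$: if $Z_\alpha\ne Z_\beta$ then, since neither of these irreducible codimension-$2$ subvarieties contains the other's generic point, a general fibre of $E_\alpha\to Z_\alpha$ lies over a point of $Z_\alpha\setminus Z_\beta$ and is therefore disjoint from $E_\beta$, giving $e_\alpha\cdot E_\beta=0$. For the block attached to $Z_k$ I would invoke \cite[1.3]{Wierzba}: over a neighbourhood of the general point of $Z_k$ the contraction $\pi$ is, after an \'etale (or analytic) base change, the product of the minimal resolution of a Du Val singularity $\C^2/\Gamma_k$ ($\Gamma_k<SL(2,\C)$) with a smooth factor, twisted by the monodromy of the family along $Z_k$, which acts by automorphisms of the Dynkin diagram of $\Gamma_k$. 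Under this description the codimension-$1$ components of $\pi^{-1}(Z_k)$, i.e.\ the $E_\alpha$ with $\alpha\in A_k$, correspond to the monodromy orbits of simple roots, and $e_\alpha$ is the common class of the exceptional curves lying in that orbit; hence the $A_k$-block of $-(e_\alpha\cdot E_\beta)$ is exactly the matrix of intersections of orbit-classes $\bigl(e_i\cdot\sum_{e_l\in H_k(e_j)}e_l\bigr)$ attached to the Cartan matrix of $\Gamma_k$ and the monodromy group $H_k$, in the notation of \ref{root-systems}. Running over the automorphism groups of the $\bA$--$\bD$--$\bE$ diagrams (trivial, $\bZ_2$, and $\sigma_3$ acting on $\bD_4$) produces precisely the Cartan matrices of the simple types $\bA_n,\dots,\bG_2$, together with the matrices $\mathbb U_n$ coming from the diagram involution of $\bA_{2n}$; this is the second assertion.

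The first bullet then follows: every matrix on the list is non-degenerate (finite-type Cartan matrices have positive determinant and $\det\mathbb U_n=1$), so $M=(e_\alpha\cdot E_\beta)$ is non-degenerate. Since the $E_\beta$ span $N^1(X/Y)$, the pairing map $N_1(X/Y)\to\R^{|A|}$, $v\mapsto(v\cdot E_\beta)_\beta$, is injective, and it sends $e_\alpha$ to the $\alpha$-th row of $M$; as these rows are linearly independent, so are the $e_\alpha$, forcing $|A|=\dim N_1(X/Y)$. Thus $\{E_\alpha\}$ is a basis of $N^1(X/Y)$ and $\{e_\alpha\}$ a basis of $N_1(X/Y)$. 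For the third bullet I would assume $X$ is a MDS over $Y$ and use Theorem \ref{dualityMovEss} (in the form valid for essential curves, whose locus has codimension $1$) to write $\Mov(X/Y)=\Ess(X/Y)^\vee$, then identify $\Ess(X/Y)$ with the cone $\sum_\alpha\R_{\ge0}\,e_\alpha$: the inclusion ``$\supseteq$'' holds because $e_\alpha$ has locus $E_\alpha$ of codimension $1$, while ``$\subseteq$'' holds because any essential curve lies, over one of the $Z_k$, in a fibre of the Du Val $\times$ smooth model above, so its class is a non-negative combination of the exceptional curve classes there, each of which is one of the $e_\alpha$. Since $\{e_\alpha\}$ is a basis, this cone is simplicial of maximal dimension, hence so is its dual $\Mov(X/Y)$, which is spanned by the corresponding dual basis.

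I expect the real work to sit in the second step: Wierzba's codimension-$2$ analysis together with the attendant monodromy and Dynkin-folding bookkeeping --- in particular, seeing that the orbit-intersection matrices are precisely the folded Cartan matrices plus the $\mathbb U_n$, the involution of $\bA_{2n}$ (which fixes a node) being the delicate case. The remaining steps are formal linear algebra and a direct application of the duality established in \ref{dualityMovEss}.
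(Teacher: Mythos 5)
Your proposal is correct and follows exactly the route the paper intends: the paper offers no written proof beyond the remark that the theorem ``is a combination of Wierzba's result \cite[1.3]{Wierzba} with \ref{dualityMovEss}'', and your argument is a faithful fleshing-out of that --- block-diagonality over the distinct codimension-$2$ strata, identification of each block with a folded Cartan matrix (or $\mathbb U_n$ for the $\bA_{2n}$ involution), nondegeneracy of these blocks giving the basis statement, and dualization via \ref{dualityMovEss} for the description of $\Mov(X/Y)$.
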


In short, the above theorem says that, apart from the case $\mathbb
U_n$, which we do not expect to occur, the situation of an arbitrary local
symplectic contraction on the level of divisors and 1-cycles is very
much like in the case of the contraction to the nilpotent cone, which is
the case of Theorem \ref{BrieskornSlodowy} and Corollary
\ref{MovNefWeyl}.

\begin{conjecture}\label{un-necessary}
  The case ${\mathbb U}_n$ does not occur. That is, there is no
  symplectic contraction $X\ra Y$ such that $Y$ has a codimension 2 locus of
  $\bA_{2n}$ singularities and there exists a non-trivial
  numerical equivalence for curves in $X$ which are in a general fiber of $\pi$
  over this locus.
\end{conjecture}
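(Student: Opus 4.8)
The plan is to argue by contradiction, reducing first to dimension~$4$, then translating the hypothesis into very concrete geometry, and finally locating the obstruction inside the central fibre.

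\emph{Reduction and translation.} Suppose such a contraction exists. Intersecting $Y$ with a general complete intersection of dimension~$4$ and invoking vertical slicing (\ref{vertical-slicing}), we may assume $\dim X=\dim Y=4$; then $X$ is a Mori Dream Space over $Y$ by \ref{4dimMDS}, the codimension~$2$ locus $B\subset Y$ is a surface, and $\pi^{-1}(0)$ is the unique $2$-dimensional fibre. Over $B^\circ=B\setminus\{0\}$ the morphism $\pi$ is, fibrewise, the minimal resolution of an $\bA_{2n}$ singularity, and the monodromy on the chain $C_1-\cdots-C_{2n}$ of $(-2)$-curves yields a homomorphism $\rho\colon\pi_1(B^\circ)\ra\operatorname{Aut}(\bA_{2n})=\bZ_2$. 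A direct computation of the intersections $C_i\cdot E_\beta$, in the spirit of the folding computations of \ref{root-systems} and of the proof of \ref{cone-structure}, shows that two curves of a general fibre can be numerically equivalent in $X$ only if $\rho$ is \emph{surjective}, in which case $C_i\equiv C_{2n+1-i}$ for all $i$ and the opposite of the intersection matrix of the divisorial part of the exceptional set is exactly ${\mathbb U}_n$. In particular there is a single ``middle'' exceptional prime divisor $E$ whose general fibre over $B$ is the reducible conic $C_n\cup C_{n+1}$, with $[C_n]=[C_{n+1}]=:e$ in $N_1(X/Y)$ and $e\cdot E=-1$.

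\emph{Structure of the middle divisor.} Since $B^\circ$ is a codimension~$2$ symplectic leaf of $Y$, the local structure theory of symplectic singularities (\cite{Namikawa}, \cite{KaledinResQSing}) identifies $\overline{B}$ with a Du Val singularity $\bC^2/\Gamma$, $\Gamma<SL(2,\bC)$, and $\pi_1(B^\circ)=\Gamma$; surjectivity of $\rho$ then exhibits an index~$2$ subgroup $\Gamma'\triangleleft\Gamma$ and a connected \'etale double cover $\nu\colon\cM^\circ\ra B^\circ$ parametrising the two curves through each general point. By \ref{DuValsing} the natural compactification $\cM$ of $\cM^\circ$ is a (possibly non-minimal) resolution of $\bC^2/\Gamma'$, the universal family over $\cM$ is a $\bP^1$-bundle, and $E$ is recovered from it by gluing it to itself, via the deck involution of $\nu$, along a section $\Sigma$. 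Hence $E$ is irreducible but singular along the $2$-dimensional locus $\Sigma$ swept out by the nodes $C_n(b)\cap C_{n+1}(b)$, while $X$ is smooth along $E$; one checks that near a general point of $\Sigma$ this is realised simply by two smooth branches of $E$ crossing transversally in $X$, so that no contradiction arises over $B^\circ$ itself.

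\emph{The obstruction, and the main difficulty.} It remains to show that this configuration cannot be completed across the $2$-dimensional fibre $\pi^{-1}(0)$. The plan is to follow the $\bA_{2n}$-chain as $b\to 0$: $E$ and $\Sigma$ meet $\pi^{-1}(0)$ in a surface and a curve respectively, and the deck involution of $\nu$ forces the chain to fold onto itself in the central fibre; one then seeks a contradiction either with the smoothness of $X$ and the crepancy of $\pi$ there, or --- after flopping (\ref{4dimMDS}) to reach a suitable SQM and contracting the divisorial facet of $\Mov(X/Y)$ cut out by $e^{\perp}$ --- with the structure of divisorial contractions of $4$-dimensional symplectic manifolds implied by \ref{WW} together with \ref{discrepancy} and \ref{changeV}. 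Equivalently, one wants to prove that the only diagram automorphisms realisable as monodromy along a codimension~$2$ symplectic leaf are those producing a reduced root system, that is, the ones arising for Slodowy slices in nilpotent cones (\ref{root-systems}); the fixed-point-free involution of $\bA_{2n}$ is not of this kind, which is exactly why ${\mathbb U}_n$ carries a~$1$ on its diagonal. This last step is the crux, and is the reason the statement is recorded only as a conjecture: a complete argument seems to require either the full classification of $4$-dimensional symplectic contractions, or a sharpening of the Kaledin--Namikawa analysis of symplectic singularities in codimension~$2$ (and of the theory of Poisson deformations, in which codimension~$2$ leaves contribute the simple factors of the ``Namikawa--Weyl group''), to the effect that no symplectic leaf can support the $\bA_{2n}$-folding monodromy. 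In the quotient case $Y=\bC^4/G$ we expect the statement to follow already from the known classification of symplectic resolutions of such quotients together with the McKay correspondence of \cite{KaledinMcKay}.
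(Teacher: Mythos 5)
This statement is Conjecture \ref{un-necessary}: the paper does not prove it, and offers only a reduction, namely that by the MDS structure of \ref{4dimMDS} and the duality \ref{dualityMovEss} one may pass to an SQM model on which the curve $C_1$ with $C_1\cdot E_1=-1$ is contracted by an elementary divisorial contraction, so that it suffices to treat the case of an elementary contraction which over the codimension-2 locus resolves $\bA_2$ singularities. Your proposal is likewise not a proof, and you say so yourself: the entire content of the conjecture is the step you defer in your last paragraph, i.e.\ showing that the folded $\bA_{2n}$ monodromy (equivalently the block $\mathbb{U}_n$ of \ref{U-matrix} in the intersection matrix of \ref{cone-structure}) cannot be completed across the two-dimensional fibre. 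Everything before that point is an unwinding of Wierzba's theorem and of \ref{DuValsing} into a concrete picture of the non-normal ``middle'' divisor $E$ with $e\cdot E=-1$; that picture is correct and consistent with the paper, but it produces no contradiction, as you note, since over $B^\circ$ the configuration is locally a product and perfectly symplectic. So the proposal identifies the obstacle without overcoming it, which is the definition of a gap here.

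Two further points on the parts you do assert. First, the reduction to dimension $4$ via \ref{vertical-slicing} does not work as stated: slicing at a general point of the codimension-2 stratum produces a local symplectic contraction of a \emph{surface} ($m=1$ in \ref{symplectic-strata}), on which all $2n$ exceptional curves are independent and the phenomenon disappears; to land in dimension $4$ you must slice at a deeper stratum, and then the persistence of the numerical equivalence $C_i\equiv C_{2n+1-i}$ (equivalently, the non-extendability of separating divisors) on the slice is exactly what needs proof, not something that comes for free. The paper instead stays in dimension $4$ by hypothesis and reduces within $\Mov(X/Y)$. Second, the identification of $\overline{B}$ with $\bC^2/\Gamma$, $\Gamma<SL(2,\bC)$, and of $\pi_1(B^\circ)$ with $\Gamma$ is not available in this generality; what the paper actually establishes (\ref{DuValsing}, and \ref{singS} in the quotient case) is that the normalization of the Stein factorization $\widetilde S'$ has at worst Du Val singularities, which is weaker and is a conclusion, not a starting hypothesis. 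Neither issue is fatal to the overall strategy, but both would need repair even before confronting the main missing step.
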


Since in dimension four $X$ is an MDS over $Y$, in order
to prove this conjecture it is enough to deal with the case when $\pi: X\ra
Y$ is elementary and it is a contraction to $\bA_2$
singularities in codimension $2$. Indeed, take an irreducible curve $C_1$ whose
intersection with the irreducible divisor $E_1$ is $(-1)$; this is
the upper-right-hand corner of the matrix ${\mathbb U}_n$ in 
\ref{U-matrix}. The class of $C_1$ spans a ray on $\Ess(X/Y)$ and its
dual $C_1^\perp \cap \Mov(X/Y)$ is a facet of $\Mov(X/Y)$. Hence we
can choose an SQM model $X'$ with a facet of $\Nef(X'/Y)$ contained in
$C_1^\perp$. Thus there exists an elementary contraction of $X'$
which contracts $C_1$ whose exceptional locus is (the strict
transform of) $E_1$.

\begin{corollary} 
\label{Weyl}
  Suppose that the conjecture \ref{un-necessary} is true. Then, for
  every local symplectic contraction $\pi: X\ra Y$ satisfying the
  conditions of \ref{purely-divisorial}, there exists a semisimple Lie
  group and an identification of $N^1(X/Y)$ and $N_1(X/Y)$ with the
  real part of its Cartan algebra such that: (1) the intersection of
  the 1-cycles with classes of divisors is equal to the Killing form
  product, (2) the classes of irreducible essential curves spanning
  rays of $\Ess(X/Y)$ is identified with its primitive roots and (3)
  the cone $\Mov(X/Y)$ is identified with its Weyl chamber.
\end{corollary}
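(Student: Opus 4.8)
The plan is to deduce Corollary \ref{Weyl} almost immediately from Theorem \ref{cone-structure} together with the assumed truth of Conjecture \ref{un-necessary}, essentially by translating the linear-algebraic data into Lie-theoretic language. First I would invoke \ref{cone-structure}: under the hypotheses of \ref{purely-divisorial}, the classes $E_\alpha$ form a basis of $N^1(X/Y)$, the classes $e_\alpha$ form a basis of $N_1(X/Y)$, and $-(e_\alpha\cdot E_\beta)$ is a direct sum of Cartan matrices of $\bA$--$\bD$--$\bE$--$\bF$--$\bG$ type, plus possibly blocks of type $\mathbb U_n$. Since we are assuming \ref{un-necessary}, the $\mathbb U_n$ blocks do not appear, so $-(e_\alpha\cdot E_\beta)$ is the Cartan matrix $A$ of some semisimple Lie algebra $\g$, with a fixed choice of simple roots $\{\varepsilon_\alpha\}$ corresponding to the $e_\alpha$'s. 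Let $V$ be the real Cartan subalgebra of $\g$ (or rather the real span of the coroots/roots, with the Killing form), which is canonically self-dual via the Killing form.

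Next I would set up the identifications. Map $N_1(X/Y)\to V$ by sending $e_\alpha$ to the simple root $\varepsilon_\alpha$ (primitive in the root lattice); this is an isomorphism because both are bases. Map $N^1(X/Y)\to V$ by sending $E_\alpha$ to the fundamental coweight $\varpi_\alpha$ dual to $\varepsilon_\alpha$ under the pairing given by the Cartan matrix — equivalently, the element of $V$ with $\langle \varpi_\alpha,\varepsilon_\beta\rangle = -(e_\alpha\cdot E_\beta)_{\beta\alpha}$ normalized so that the pairing $N^1\times N_1\to\Q$ becomes the Killing-form pairing $V\times V\to\R$. Concretely, one checks that with $e_\alpha\mapsto \varepsilon_\alpha$ the condition $(e_\alpha\cdot E_\beta)=-A_{\alpha\beta}$ forces $E_\beta$ to go to the dual basis element of $\{\varepsilon_\alpha\}$ with respect to the form, i.e. a scalar multiple of the fundamental coweight; this is exactly statement (1). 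Statement (2) is then immediate: by \ref{cone-structure} the rays of $\Ess(X/Y)$ are spanned by the classes of irreducible essential curves, and the extremal ones among the $e_\alpha$ are precisely the simple roots, whose positive-integer combinations with the required sign pattern are the positive roots — so the irreducible essential curves spanning rays of $\Ess$ correspond to the primitive (simple) roots. For (3): by \ref{dualityMovEss} and the third bullet of \ref{cone-structure}, $\Mov(X/Y)$ is dual to the cone spanned by the $e_\alpha$, i.e. dual to the cone spanned by the simple roots; under our identification that dual cone is $\{v\in V : \langle v,\varepsilon_\alpha\rangle\ge 0 \ \forall\alpha\}$, which is by definition the (closed, dominant) Weyl chamber.

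The one genuine subtlety — and the step I would be most careful about — is the normalization in part (1). The Cartan matrix is not symmetric for types $\bB_n$, $\bC_n$, $\bF_4$, $\bG_2$, whereas the intersection pairing $N^1\times N_1$ and the Killing form are each "symmetric" in their own sense, so one must be precise about which lattice carries the roots and which carries the coroots/coweights, and about the length normalization of the simple roots. The clean way is: put the simple roots $\varepsilon_\alpha$ in $V$ with their standard lengths, let $\varepsilon_\alpha^\vee = 2\varepsilon_\alpha/(\varepsilon_\alpha,\varepsilon_\alpha)$ be the coroots, so that $A_{\alpha\beta}=(\varepsilon_\beta,\varepsilon_\alpha^\vee)$; then send $e_\alpha\mapsto\varepsilon_\alpha$ and $E_\alpha\mapsto$ the fundamental weight $\omega_\alpha$ (dual to $\varepsilon_\alpha^\vee$). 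Under these choices $-(e_\alpha\cdot E_\beta) = A$ translates to $(\varepsilon_\alpha,\varepsilon_\beta^\vee)$ becoming the intersection matrix, and the Killing form on $V$ restricts to the pairing computing these numbers — so (1) holds after this identification, and there is no choice of scaling left to make. Everything else is formal. I would also remark that the semisimple group is unique up to isogeny since only its root system enters, and that the dependence on the SQM model (cf.~\ref{changeV}) does not affect $\Mov$, $\Ess$, or the intersection numbers of essential curves with divisors, as already noted after \ref{dualityMovEss}.
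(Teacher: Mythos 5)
Your overall route is the paper's: the corollary is meant to be read off directly from Theorem \ref{cone-structure} (once Conjecture \ref{un-necessary} excludes the ${\mathbb U}_n$ blocks, the matrix $-(e_\alpha\cdot E_\beta)$ is the Cartan matrix of a semisimple Lie algebra), with the dictionary supplied by the nilpotent-cone model of the appendix (Lemmas \ref{contrF}, \ref{class-alpha}, Corollary \ref{MovNefWeyl}). Your treatment of (2) and of (3) is fine: with $e_\alpha\mapsto\varepsilon_\alpha$ and the intersection pairing turned into the Killing form, $\Mov(X/Y)=\Ess(X/Y)^\vee$ goes to $\{v:(v,\varepsilon_\alpha)\ge 0\ \forall\alpha\}$, the dominant Weyl chamber.

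The step that fails is the explicit identification of the divisor classes in your last paragraph. If $e_\alpha\mapsto\varepsilon_\alpha$ and you demand that $(e_\alpha\cdot E_\beta)=-C_{\alpha\beta}=-(\varepsilon_\alpha,\varepsilon_\beta^\vee)$ become the Killing pairing, then the image of $E_\beta$ is forced (by nondegeneracy, as you correctly observe) to be the unique $v$ with $(\varepsilon_\alpha,v)=-(\varepsilon_\alpha,\varepsilon_\beta^\vee)$ for all $\alpha$, namely $v=-\varepsilon_\beta^\vee$, the \emph{negative of the simple coroot} --- a negative multiple of $\varepsilon_\beta$ itself. This matches Lemma \ref{class-alpha}, where $E_\alpha$ has class $-\alpha$ while the curve class is $\alpha^\vee$. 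It is \emph{not} the fundamental weight $\omega_\beta$ dual to $\varepsilon_\beta^\vee$, nor "the dual basis element of $\{\varepsilon_\alpha\}$ with respect to the form": already for $\bA_2$ one has $\omega_1=(2\varepsilon_1+\varepsilon_2)/3$, hence $(\varepsilon_1,\omega_1)=1$ and $(\varepsilon_2,\omega_1)=0$, whereas $(e_1\cdot E_1)=-2$ and $(e_2\cdot E_1)=1$; so with $E_\alpha\mapsto\omega_\alpha$ property (1) is simply false. (The two prescriptions agree up to a positive scalar only in rank one, which is presumably the source of the slip; note also the sign discrepancy in your formula $\langle\varpi_\alpha,\varepsilon_\beta\rangle=-(e_\alpha\cdot E_\beta)_{\beta\alpha}$, which is the negative of what (1) requires.) Since your abstract prescription --- take the element whose Killing pairings with the $\varepsilon_\alpha$ reproduce the intersection numbers --- is the right one, the repair is only to relabel that element as $-\varepsilon_\beta^\vee$; but as written the concrete map does not satisfy (1).
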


\begin{conjecture}
  Under the above identification the classes of (integral) 1-cycles
  should form the lattive $\Lambda_R$ of roots, while the classes of
  divisors should make the lattice $\Lambda_W$ of weights.
\end{conjecture}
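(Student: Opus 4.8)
The plan is to deduce the integrality statements from a precise comparison between the geometry of the contraction and the combinatorics of the associated root system, building on \ref{cone-structure} and \ref{Weyl}. The starting point is that, under the hypotheses of \ref{purely-divisorial}, the classes $E_\alpha$ form a $\Z$-basis of $N_1(X/Y)\cap\Pic(X/Y)$ (this needs an argument: they are a $\Q$-basis by \ref{cone-structure}, but one must show the index of the sublattice they span inside $\Pic(X/Y)$ is $1$), and dually the classes $e_\alpha$ of general fiber components generate the lattice $\Cl$ of $1$-cycles modulo numerical equivalence; the intersection pairing $-(e_\alpha\cdot E_\beta)$ is then the (generalized) Cartan matrix from \ref{cone-structure}. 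The reflections in the walls of $\Mov(X/Y)$ corresponding to divisorial contractions should be realized geometrically — crossing a wall of the movable cone via the divisorial contraction $X'\ra Y'\ra Y$ and its "other" small resolution — and one checks these are exactly the simple reflections of the Weyl group acting on $\Lambda_R$, $\Lambda_W$.

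The key steps, in order: (i) show that for a single exceptional divisor $E_\alpha$ the slice/codimension-$2$ analysis of Wierzba (\cite{Wierzba}, as packaged in \ref{cone-structure}) identifies the pair $(e_\alpha, E_\alpha)$ with a primitive root and the corresponding fundamental coweight up to the Cartan pairing, so that the integral structure on a single $\bA$-$\bD$-$\bE$ (or $\mathbb U_n$, hypothetically) factor is the standard one; (ii) use vertical slicing (\ref{vertical-slicing}) together with \ref{WW} to reduce the computation of $e_\alpha\cdot E_\beta$ and of the primitivity of $e_\alpha$ in $N_1(X/Y)$ to the $4$-dimensional, indeed to the surface Du Val, situation, where the identification of the dual graph of the exceptional curves with a Dynkin diagram is classical and the $1$-cycle lattice is literally the root lattice; (iii) assemble the local data over the various irreducible components of the codimension-$2$ part of the exceptional locus of $\pi$ into the direct-sum decomposition of \ref{cone-structure}, checking that no "gluing" index is introduced — i.e. the global $1$-cycle lattice is the direct sum of the local root lattices; (iv) conclude by duality (\ref{dualityMovEss}) that the divisor lattice is the dual lattice, hence $\Lambda_W$.

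The main obstacle I expect is step (iii), the passage from local (analytic, codimension-$2$ slice) integrality to global integrality of the lattices $N_1(X/Y)\cap(\text{cycles})$ and $\Pic(X/Y)$: a priori there could be torsion or an index $>1$ coming from the global topology of $\pi^{-1}(0)$ and the way the divisors $E_\alpha$ meet over $0\in Y$, so one must rule out the existence of a $1$-cycle class that is an integral combination of the $e_\alpha$ with non-integral coefficients yet still integral in $N_1(X/Y)$. For the dimension-$4$ case this should follow from \ref{4dimMDS} (all SQM models are smooth and connected by Mukai flops, so the relevant vanishing/torsion-freeness of $\Pic$ is inherited from the flop picture of \ref{MukaiFlop}, where the Cox ring is explicit), but in arbitrary dimension — where we only assume $X$ is an MDS over $Y$ — this is genuinely the crux and is, I suspect, the reason the statement is left as a conjecture rather than a theorem. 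A secondary difficulty is that, should the $\mathbb U_n$ case occur, the relevant "Cartan matrix" $\mathbb U_n$ of \eqref{U-matrix} is not symmetrizable in the usual way, so the phrase "lattice of roots/weights" would need reinterpretation; under Conjecture \ref{un-necessary} this issue disappears.
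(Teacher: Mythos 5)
First, the statement you are addressing is left as a \emph{conjecture} in the paper: the authors offer no proof, so there is nothing of theirs to compare your argument against, and your proposal --- which you yourself flag as having an unresolved crux at step (iii) --- does not close it either. It is a strategy outline, not a proof.

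Beyond that, there is a concrete error in your setup. Your ``starting point,'' that the exceptional divisor classes $E_\alpha$ form a $\Z$-basis of $\Pic(X/Y)$ (index one), contradicts the very statement you are trying to prove. Under the identification of Corollary \ref{Weyl} the curve classes $e_\alpha$ are the simple roots and the $E_\alpha$ pair with them via the negative of the Cartan matrix; if the $e_\alpha$ generate the $1$-cycle lattice $\Lambda_R$ and the $E_\alpha$ generated the divisor lattice, the two lattices would pair with determinant $\det(\text{Cartan})$, whereas the conjecture asserts that the divisor lattice is the full dual $\Lambda_W$, in which the span of the $E_\alpha$ has index $\det(\text{Cartan})=|\Lambda_W/\Lambda_R|>1$ except in unimodular cases such as $\bE_8$. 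This is exactly what happens in the model case of the nilpotent cone: by Lemma \ref{class-alpha} one has $\Pic X=\Lambda_W$ while $[E_\alpha]=-\alpha$, so the exceptional divisors span only the root lattice, of index $n+1$ for $\bA_n$. What needs to be produced is therefore not ``index one'' but the complementary divisors realizing the fundamental weights (divisors $D_\alpha$ with $D_\alpha\cdot e_\beta=\delta_{\alpha\beta}$), together with a proof that $\Pic(X/Y)$ is no larger than the dual lattice of the curve lattice. Relatedly, your concluding step (iv) appeals to \ref{dualityMovEss}, but that is a duality of rational \emph{cones}; it carries no information about integral structures, and the unimodularity of the pairing between the divisor lattice and the curve lattice is precisely the content of the conjecture, not something that can be read off from $\Mov(X/Y)=\Ess(X/Y)^\vee$. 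These two points, together with the global-index issue you correctly identify in step (iii), are why the statement remains open.
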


\subsection{Examples of root systems}
We give a description of the cone of moving divisors and of the
related root system for three examples coming as resolutions of
$\bC^4/G$, where $G < Sp(\C^4) =: Sp(4)$ is a finite subgroup
preserving a symplectic form.  The reader unfamiliar with these
examples may prefer to read first Section \ref{examples}.

\begin{example} Let $BT$ be the binary tetrahedral group; that is $BT$
  is the preimage under the standard homomorphism $SU(2) \ra SO(3)$ of
  the subgroup of $SO(3)$ generated by the symmetries of a regular
  tetrahedron. This group has three irreducible representations on
  $\C^2$, the standard arising from the embedding into $SU(2)$ and two
  other one called $S_1$ and $S_2$.  The group $BT$ acts on the
  product $S_1\otimes S_2 = \C^4$. It is known that $\bC^4/BT$ has a
  symplectic resolution; in fact, recently Lehn and Sorger gave an
  explicit construction of it, see \cite{LehnSorger}.  This resolution
  is related to the root system $\bA_2$ with generators $e_1$ and
  $e_2$. Then $v=\pm(e_1-e_2)$ is the flopping system. Here is the
  picture of the weight lattice, together with roots denoted by
  $\bullet$ and flopping classes denoted by $\circ$. The $\Mov$ cone
  (or Weyl chamber) is divided into two parts by the line orthogonal
  to the flopping class.
\begin{eq}\label{C4/BT-diagram}
\begin{xy}<24pt,0pt>:
(-3,0)*={\circ} ; (3,0)*={\circ} **@{.},
(0,0)*={} ; (-1.5,2.598)*={} **@{-},
(0,0)*={} ; (1.5,2.598)*={} **@{-},
(0,0)*={} ; (0,2.598)*={} **@{-},
(0,0)*={} ; (-1,-1.732)*={} **@{.},
(0,0)*={} ; (1,-1.732)*={} **@{.},
(-1.5,0.866)*={\bullet} ; (1.5,0.866)*={\bullet} **@{.},
(-1.5,-0.866)*={\bullet} ; (1.5,-0.866)*={\bullet} **@{.},
(0,1.732)*={\bullet} ; (1.5,-0.866)*={} **@{.},
(0,-1.732)*={\bullet} ; (1.5,0.866)*={} **@{.},
(0,1.732)*={} ; (-1.5,-0.866)*={} **@{.},
(0,-1.732)*={} ; (1.5,0.866)*={} **@{.},
(-1.5,0.866)*={} ; (0,-1.732)*={} **@{.},
(-3,0.3)*={e_2-e_1}, (3,0.3)*={e_1-e_2},
(1.5,1.2)*={e_1}, (-1.5,1.2)*={e_2},
(-1.5,-1.2)*={E_1}, (1.5,-1.2)*={E_2},
(0.1,2.9)*={(e_1-e_2)^\perp}, (-1.7,2.7)*={e_1^\perp}, (1.8,2.7)*={e_2^\perp} 
\end{xy}
\end{eq}
\end{example}

\begin{example}
  Take the quotient $\bC^4/\bZ_{2}^2\rtimes\bZ_2$ and its Hilb-Chow
  resolution $X\ra Y$; this is Example \ref{HilbertChow} for $n=2$,
  more details can be found in Section \ref{examples}.  It is related
  to the decomposable root system $\bA_1\oplus\bA_1$ with roots
  denoted, respectively, by $e_0$ and $e_1$. The following picture
  describes a section of $\Mov$ together with its decomposition by
  flopping classes.
\begin{eq}\label{C4/Z2xZ2-diagram}
\begin{xy}<20pt,0pt>:
(-2,2)*={\circ} ; (2,2)*={} **@{.},
(-2,1)*={} ; (2,1)*={} **@{.},
(-2,0)*={\bullet} ; (2,0)*={\bullet} **@{.},
(-2,-1)*={} ; (2,-1)*={} **@{.},
(-2,-2)*={} ; (2,-2)*={\circ} **@{.},
(-2,2)*={} ; (-2,-2)*={} **@{.},
(-1,2)*={} ; (-1,-2)*={} **@{.},
(0,2)*={\bullet} ; (0,-2)*={\bullet} **@{.},
(2,2)*={} ; (2,-2)*={} **@{.},
(1,2)*={} ; (1,-2)*={} **@{.},
(0,0)*={} ; (3,0)*={} **@{-},
(0,0)*={} ; (3,3)*={} **@{-},
(0,0)*={} ; (0,3)*={} **@{-},
(-3,2)*={e_0-e_1}, (-0.3,2.2)*={e_0}, (2.3,0.3)*={e_1}, (3,-2)*={e_1-e_0},
(3.3,2)*={(e_0-e_1)^\perp}
\end{xy}
\end{eq}

\end{example}

\begin{example}
  Take the quotient $Y=\bC^4/\bZ_{3}^2 \rtimes \bZ_2$ and its
  Hilb-Chow resolution $X\ra Y$; this is Example \ref{HilbertChow} for
  $n=3$.  It is
  related to the decomposable root system $\bA_2\oplus\bA_1$ with
  roots denoted, respectively, by $e_1,\ e_2$ and $e_0$. The following
  picture describes a plane section of a 3-dimensional cone
  $\Mov(X/Y)$ (denoted by solid line segments) together with its
  decomposition by flopping classes (denoted by dotted line
  segments). The upper chamber in this picture is the nef cone
  $\Nef(X/Y)$. This situation will be discussed in detail in
  \ref{A_2+A_1} and \ref{A_n+A_1}.
\begin{eq}\label{C4/Z3xZ2-diagram}
\begin{xy}<14pt,0pt>:
(-4,0)*={} ; (4,0)*={} **@{-},
(-4,0)*={} ; (0,-8)*={} **@{-}, 
(4,0)*={} ; (0,-8)*={} **@{-}, 
(-4,-4)*={} ; (7,-4)*={} **@{.},
(-5,0.666)*={} ; (3,-4.666)*={} **@{.}, 
(5,0.666)*={} ; (-3,-4.666)*={} **@{.},
(0,0.5)*={e_0^\perp}, (-3,-3)*={e_1^\perp}, (1.5,-6)*={e_2^\perp},
(6,-4.2)*={(e_0-e_1-e_2)^\perp}, (-1.5,-1)*={(e_0-e_1)^\perp}, 
(1.5,-1.6)*={(e_0-e_2)^\perp}
\end{xy}
\end{eq}

\end{example}


\section{Rational curves and differential forms}\label{rational-curves}
\subsection{The set-up}
Let $\pi: X \ra Y$ be a local symplectic contraction of a 4-fold. We
assume that we are in the situation of \ref{purely-divisorial}. In
particular, the exceptional locus of $\pi$ is a divisor $D$. This
divisor, as well as its image surface $S:=\pi (D) \subset Y$, can be
reducible. As above $0 \in S \subset Y$ denotes the unique point over
which $\pi$ can have a two dimensional fiber.

Our starting point is the paper of Wierzba \cite{Wierzba} (as well as
the appendix of \cite{SolaCondeWisniewski}) to which we will often refer. In
particular Theorem 1.3 of \cite{Wierzba} says that a general fiber of
$\pi$ over any component of $S$ is a configuration of $\pu$'s with
dual graph being a Dynkin diagram.  The components of these fibers are
called essential curves in the previous section.

Choose an irreducible component of $S$, call it $S'$. Take an
irreducible curve $C\iso\bP^1$ in a (general) fiber over a point in
$S'\setminus\{0\}$ and let $D'$ be the irreducible component of $D$
which contains $C$; note that $\pi(D') = S'$ and $S'$ may be (and
usually is) non-normal.  Let $\cV' \subset Chow(X/Y)$ be an
irreducible component of the Chow scheme of $X$ containing $C$. By
$\cV$ we denote its normalization and $p: \cU \ra \cV$ is the
normalized pullback of the universal family over $\cV'$. Finally, let
$q : \cU \ra D' \subset X$ be the evaluation map, see
e.g.~\cite[I.3]{Kollar} for the construction. The contraction $\pi$
determines a morphism $\tilde\pi: \cV \ra S'$, which is surjective
because $C$ was chosen in a general fiber over $S'$. We let $\mu :\cV
\ra \widetilde S' \ra S'$ be its Stein factorization. In particular
$\widetilde S'$ is normal and $\nu:\widetilde S' \ra S'$ is a finite
morphism, \`etale outside $\nu^{-1}(0)$, whose fibers are related to
the orbits of the action of the group of automorphism of the Dynkin
diagram, \cite[1.3]{Wierzba}. We will assume that $\mu$ is not an
isomorphism which is equivalent to say that $D'$ has a 2-dimensional
fiber over $0$.
Also, since we are interested in understanding the local description
of the contraction in analytic category we will assume that $S'$ is
analytically irreducible at $0$ or that $\nu^{-1}(0)$ consists of
single point. The exceptional locus of $\mu$ is $\mu^{-1}(\nu^{-1}(0))
= \bigcup_i V_i$ where $V_i\subset\cV$ are irreducible curves.
\begin{eq}\label{evaluation-diagram}
\xymatrix{
\cU  \ar[d]_{p}  \ar[rr]^(.40){q}& &D' \ar[d]_{\pi}  \subset X \\
\cV \ar[r]^(.40)\mu &\widetilde S'  \ar[r]^(.40)\nu&S' \subset Y}
\end{eq}

If necessary, we can take $\cV$ to be smooth, eventually by replacing
it with its desingularization and $\cU$ with the normalized fiber
product.  A general fiber of $p:\cU\ra\cV$ is $\bP^1$ while other
fibers are, possibly, trees of rational curves. If $C$ is an extremal
curve, which by \ref{dualityMovEss} and \ref{4dimMDS} is true for some
SQM model of $X$, then $-D$ is ample on the extremal ray spanned by
$C$ and since $-D\cdot C\leq 2$ it follows that $p:\cU\ra\cV$ is a
$\bP^1$ or conic bundle. Since any two SQM models of $X$ are obtained
by a sequence of Mukai flops, it follows that in a general situation
$p:\cU\ra\cV$ is obtained by a sequence of blows and blow-downs of a
$\bP^1$ or conic bundle.

In \cite{Wierzba} and \cite{SolaCondeWisniewski} it was proved that
$S' \setminus \{ 0 \}$ is smooth and that, on $\cV \setminus \{ (\nu
\circ\mu)^{-1}(0)\}$, $p$ is a $\bP^1$-bundle. It was also showed, by
pulling back the symplectic form via $q$ and pushing it further down
via $p$, that one can obtain a symplectic form on $S' \setminus \{
0\}$. We will repeat their procedure in this more general case.

\subsection{The differentials}\label{differentials}
Let us consider the derivative map $Dq: q^*\Omega _X \ra
\Omega_{\cU}$. Its cokernel is a torsion sheaf, call it
$\cQ_{\Delta_2}$, supported on the set $\Delta_2$, which is the set of
points where $q$ is not of maximal rank: by the theorem on the purity
of the branch locus $\Delta_2$ is a divisor. As for the kernel, let
$I$ be the ideal of $D'$ in $X$ and consider the sequence $q^*(I/I^2)
\ra q^* \Omega_X \ra \Omega_{\cU}$. The saturation of the image of the
first map will be the kernel of the second map and it will be a
reflexive sheaf of the form $\cO_\cU (-D' + \Delta_1)$, with
$\Delta_1$ being an effective divisor. In the above notation we can
write the exact sequence

\begin{eq}\label{diff1}
  0 \lra \cO_\cU(q^*(-D') + \Delta_1) \lra q^* \Omega_X \lra
  \Omega_{\cU} \lra \cQ_{\Delta_2} \lra 0.
\end{eq}

We have another derivation map into $\Omega_{\cU}$, namely $Dp:
p^*\Omega _{\cV} \ra \Omega_{\cU}$.  It fits in the exact sequence

\begin{eq}\label{diff2}
  p^*\Omega _{\cV} \lra \Omega_{\cU} \lra \Omega_{\cU/\cV} \lra 0,
\end{eq}

whose dual sequence is 
\begin{eq}\label{diff3}
  0 \lra T_{\cU/\cV} \lra T_{\cU} \lra p^*T _{\cV}
\end{eq}

The symplectic form on $X$, that is $\omega_X$, gives an isomorphism
$\omega_X: T_X \ra \Omega_X$. We consider the following diagram
involving morphism of sheaves over $\cU$ appearing in the above
sequences.
\begin{eq}\label{main-diagram}
  \xymatrix{T_{\cU/\cV}\ar[r]&T_{\cU}\ar[d]^{(Dq)^*}\ar[r]^{(Dp)^*}&
    p^*(T_{\cV})\ar@{.>}@/^/[r]^{p^*(\omega_\cV)}&
    p^*(\Omega_{\cV})\ar[d]^{Dp}\\
    &q^*T_X \ar[r]_{q^*(\omega_X)}&q^*\Omega_X \ar[r]^{Dq}
    &\Omega_{\cU}\ar[r]&\Omega_{\cU/\cV}}
\end{eq}
We claim that the dotted arrow exists and it is obtained by a pull
back of a two form $\omega_\cV$ on $\cV$, and it is an isomorphism
outside the exceptional set of $\mu$ which is $\bigcup_i V_i$. Indeed,
the composition of arrows in the diagram which yields
$T_\cU\ra\Omega_\cU$ is given by the 2-form $Dq(\omega_X)$ and it is
zero on $T_{\cU/\cV}$, because this is a torsion free sheaf and its
restriction to any fiber of $p$ outside $\bigcup_i V_i$ (any fiber of
$p$ is there a $\bP^1$) is $\cO(2)$ while the restriction of
$\Omega_\cU$ is $\cO(-2)\oplus\cO\oplus\cO$.  By the same reason the
composition $T_\cU\ra\Omega_\cU \ra \Omega_{\cU/\cV}$ is zero since
$T_{\cU}$ on any fiber of $p$ outside $\bigcup_i V_i$ is
$\cO(2)\oplus\cO\oplus\cO$ while $\Omega_{\cU/\cV}$ is $\cO(-2)$. Thus
the map $Dq(\omega_X): T_\cU\ra\Omega_\cU$ factors through
$p^*(T_\cV)\ra p^*(\Omega_\cV)$ and, as a result,
$Dq(\omega_X)=Dp(\omega_\cV)$, for some $2$-form $\omega_\cV$ on
$\cV$. Since $Dq$ is of maximal rank outside of $p^{-1}(\bigcup_i
V_i)$ and $p$ is just a $\bP^1$-bundle there, it follows that
$\omega_\cV$ does not assume zero outside the exceptional set of
$\mu$. Hence $K_{\cV} = \sum a_iV_i$, with $a_i \geq 0$ being the
discrepancy of $V_i$. Note that the above argument follows essentially
from \cite[Sect.~4.1]{SolaCondeWisniewski} or \cite[Sect.~5]{Wierzba}.

\begin{theorem}\label{DuValsing}
  The surface $\widetilde S'$ has at most Du Val (or $\bA-\bD-\bE$)
  singularity at $\nu^{-1}(0)$ and $\mu:\cV\ra\widetilde S'$ is its,
  possibly non-minimal,
  resolution. In particular every $V_i$ is a
  rational curve. If a component $V_i$ has positive discrepancy or,
  equivalently, the form $\omega_\cV$ vanishes along $V_i$, then
  $p^{-1}(V_i) \subset \Delta_2$.
\end{theorem}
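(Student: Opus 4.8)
The plan is to reduce the statement to the triviality of the canonical class of $\widetilde S'$. First I would observe that $\widetilde S'$ is smooth away from the single point $\nu^{-1}(0)$: the morphism $\mu\colon\cV\to\widetilde S'$ is proper and birational onto a normal surface, and all its positive-dimensional fibres lie over $\nu^{-1}(0)$, so it is an isomorphism over $\widetilde S'\setminus\nu^{-1}(0)$, where $\cV$ is smooth. Since $\omega_\cV$ does not vanish outside $\bigcup_iV_i=\mu^{-1}(\nu^{-1}(0))$, transporting it along this isomorphism (equivalently, pulling back along the \'etale map $\nu$ the symplectic form on $S'\setminus\{0\}$ produced above) yields a nowhere-zero holomorphic $2$-form $\omega_{\widetilde S'}$ on $\widetilde S'\setminus\nu^{-1}(0)$. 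Viewing $\omega_{\widetilde S'}$ as a global section of the reflexive rank-one sheaf $\cO_{\widetilde S'}(K_{\widetilde S'})$, it is non-zero at every codimension-one point of $\widetilde S'$; as a section of a divisorial sheaf on a normal variety it corresponds to an effective Weil divisor, and that divisor, having no component, must vanish. Hence $K_{\widetilde S'}\sim 0$; in particular $K_{\widetilde S'}$ is Cartier.

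Next, since $K_{\widetilde S'}$ is Cartier and $\mu$ is a resolution, I would write $K_\cV=\mu^*K_{\widetilde S'}+\sum_i a_i'V_i$, where $a_i'$ is the discrepancy of $V_i$, i.e.\ the order of vanishing along $V_i$ of a local generator of $\cO_{\widetilde S'}(K_{\widetilde S'})$ pulled back to $\cV$. By the previous paragraph $\omega_{\widetilde S'}$ is a global such generator and $\mu^*\omega_{\widetilde S'}=\omega_\cV$ (both are holomorphic $2$-forms on $\cV$ agreeing on the dense locus where $\mu$ is an isomorphism), so $a_i'=a_i\ge 0$, where $a_i$ is the vanishing order of $\omega_\cV$ along $V_i$ introduced before the statement. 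Thus $\mu$ has non-negative discrepancies, which says exactly that $\widetilde S'$ has a canonical singularity at $\nu^{-1}(0)$; for surfaces this is precisely a Du Val ($\bA$--$\bD$--$\bE$, rational double point) singularity, $\mu$ is one of its resolutions, and it is minimal iff every $a_i=0$. Since Du Val singularities are rational and the exceptional curves of any resolution of a rational surface singularity are rational (such a resolution factors through the minimal one by blow-ups of points), each $V_i$ is a rational curve.

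For the last assertion, the equivalence between "$V_i$ has positive discrepancy" and "$\omega_\cV$ vanishes along $V_i$" is the identity $a_i'=a_i$ established above. Assume $a_i>0$. Then $\omega_\cV$ vanishes identically along $V_i$, hence $p^*\omega_\cV$, and therefore $Dq(\omega_X)=Dp(\omega_\cV)$ (the relation obtained before the theorem), is the zero $2$-form at every point of $p^{-1}(V_i)$. On the other hand, at a point $u$ where $q$ is of maximal rank the differential $(Dq)^*$ is injective, so its image is a hyperplane $H$ in the symplectic vector space $(T_{X,q(u)},\omega_X)$; since the restriction of a symplectic form to a hyperplane is non-zero, $Dq(\omega_X)$ — which at $u$ is the pull-back of $\omega_X|_H$ along an isomorphism — is non-zero there. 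Comparing the two conclusions, $q$ is of maximal rank at no point of $p^{-1}(V_i)$, i.e.\ $p^{-1}(V_i)\subseteq\Delta_2$.

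The step needing genuine care is the triviality of $K_{\widetilde S'}$: one must treat the canonical sheaf as the reflexive (divisorial) sheaf of the canonical Weil divisor and use Hartogs-type extension across $\nu^{-1}(0)$ to see that a section non-vanishing in codimension one forces the canonical divisor to be $0$. The classification of canonical surface singularities as Du Val, the rationality of exceptional curves lying over rational singularities, and the linear algebra of hyperplane sections of a symplectic form are standard; the remaining ingredients are read off directly from the constructions preceding the statement.
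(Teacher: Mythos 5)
Your argument is correct and follows essentially the same route as the paper: the discussion in \ref{differentials} already produces the $2$-form $\omega_\cV$ nonvanishing off $\bigcup_iV_i$, from which $K_\cV=\sum a_iV_i$ with $a_i\ge 0$, and the paper's proof of the last assertion is exactly your rank computation for $Dq(\omega_X)=Dp(\omega_\cV)$ against the nondegeneracy of $\omega_X$. You merely make explicit the steps the paper leaves implicit (triviality of $K_{\widetilde S'}$ via reflexive extension, and the identification ``nonnegative discrepancies $=$ canonical $=$ Du Val'' for surfaces), all of which are sound.
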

\begin{proof}
  The first statement follows from the discussion preceeding the
  theorem. To get the second part, note that over $\cU$ we have
  $Dq(\omega_X) = Dp(\omega_\cV)$ and $\omega_\cV$ is zero at any
  component of $\bigcup_iV_i$ of positive discrepancy. Since
  $\omega_X$ is nondegenerate this equality implies that $Dq$ is of
  rank $\leq 2$ on the respective component of $p^{-1}(\bigcup V_i)$.
\end{proof}

We note that although the surface $\widetilde S'$ is the same for all
the symplectic resolutions of $Y$, the parametric scheme for lines,
which is a resolution of $\widetilde S'$ may be different for
different SQM models, see \ref{changeV} for an explicit example.

\begin{proposition}\label{discrepancy}
  Suppose that the map $p$ is of maximal rank in codimension 1. Then
  the $p$-inverse image of the set of positive discrepancy components
  of $\bigcup_iV_i$ coincides with the set where the rank of $q$
  drops. That is, $\Delta_2$ is the pullback of the zero set of
  $\omega_\cV$.
\end{proposition}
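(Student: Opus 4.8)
The plan is to exploit the master identity $Dq(\omega_X) = Dp(\omega_\cV)$ established in the discussion preceding \ref{DuValsing}, together with the exact sequence \eqref{diff1} which encodes the rank behaviour of $Dq$ through the cokernel $\cQ_{\Delta_2}$. One inclusion — that $p^{-1}$ of the positive-discrepancy locus is contained in $\Delta_2$ — is already furnished by \ref{DuValsing}: there we saw that if $\omega_\cV$ vanishes along $V_i$ then $p^{-1}(V_i)\subset\Delta_2$, since $\omega_X$ is nondegenerate and the equality $Dq(\omega_X)=Dp(\omega_\cV)$ forces $Dq$ to drop rank wherever $Dp(\omega_\cV)$ does. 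So the substance of the proposition is the reverse inclusion: if $q$ drops rank at a point, that point must lie over a component of $\bigcup_i V_i$ with positive discrepancy.

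First I would use the hypothesis that $p$ has maximal rank in codimension $1$. This means $Dp: p^*\Omega_\cV \ra \Omega_\cU$ is injective as a bundle map off a codimension-$\geq 2$ set, so outside that set the sequence \eqref{diff2} realizes $p^*\Omega_\cV$ as a subbundle of $\Omega_\cU$, and correspondingly $Dp$ is split-injective on $p^*T_\cV \ra T_\cU$ in codimension $1$. Next, on the locus $\cU \setminus p^{-1}(\bigcup_i V_i)$ the form $\omega_\cV$ is nowhere zero (by the discrepancy discussion), hence $p^*(\omega_\cV): p^*T_\cV \ra p^*\Omega_\cV$ is an isomorphism there; combined with $\omega_X$ being an isomorphism and with the factorization $Dq(\omega_X) = Dp(\omega_\cV)$, this shows $Dq$ has maximal rank on $\cU\setminus p^{-1}(\bigcup_i V_i)$ outside a set of codimension $\geq 2$. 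But $\Delta_2$ is a divisor (purity theorem, as noted after \eqref{diff1}), so no codimension-$\geq 2$ correction can contribute a component of $\Delta_2$; therefore every component of $\Delta_2$ lies in $p^{-1}(\bigcup_i V_i)$. Finally, restricting attention to a component $V_i$ of zero discrepancy: there $\omega_\cV$ is non-vanishing, so by the same composition $Dq(\omega_X) = Dp(\omega_\cV)$ and the split-injectivity of $Dp$ in codimension $1$, $Dq$ is again of maximal rank in codimension $1$ along $p^{-1}(V_i)$, so $p^{-1}(V_i)$ contributes no component of the divisor $\Delta_2$. Hence $\Delta_2$ is exactly the union of the $p^{-1}(V_i)$ with $a_i>0$, which is precisely the pullback of the zero set of $\omega_\cV$.

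The step I expect to be the main obstacle is the bookkeeping around codimension: one must be careful that "maximal rank in codimension $1$'' for $p$, plus the codimension-$\geq 2$ ambiguities coming from the indeterminacy loci of the bundle maps in \eqref{diff2} and in the splitting of $p^*\Omega_\cV$, do not secretly hide a divisorial component of $\Delta_2$ sitting over a zero-discrepancy $V_i$ or over the $p$-smooth locus. This is exactly where the purity theorem is doing real work — $\Delta_2$ being pure of codimension $1$ is what lets us ignore the lower-dimensional discrepancies — so I would state that invocation explicitly rather than treating it as routine. A secondary point to handle cleanly is that $Dp(\omega_\cV)$ and $Dq(\omega_X)$ agree as maps $T_\cU\ra\Omega_\cU$ on all of $\cU$, not merely generically; this is already in the text (the factorization is proved sheaf-theoretically, with both sides torsion-free), so it may be quoted directly.
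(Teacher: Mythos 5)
Your overall strategy --- reduce to showing that $Dq$ has maximal rank away from $p^{-1}$ of the positive-discrepancy locus and then invoke purity of $\Delta_2$ --- is reasonable, and quoting \ref{DuValsing} for one inclusion is fine. But the central step is not valid as written. From the identity $Dq\circ q^*(\omega_X)\circ(Dq)^* = Dp\circ p^*(\omega_\cV)\circ(Dp)^*$ you conclude that, where $p$ is submersive and $\omega_\cV$ is nondegenerate, $Dq$ has maximal rank. What the identity actually gives is only a lower bound: the right-hand side is a map $T_\cU\to\Omega_\cU$ of rank exactly $2$, since it factors through the rank-$2$ bundle $p^*T_\cV$; hence the tangent map of $q$ has rank at least $2$, not $3$. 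A point where $dq$ has rank $2$ with image a \emph{symplectic} $2$-plane of $T_X$ would produce exactly the same rank-$2$ composite, so nondegeneracy of $\omega_X$ and $\omega_\cV$ alone cannot exclude a divisorial component of $\Delta_2$ of this kind sitting over the locus where $\omega_\cV\neq 0$. What the identity does give you is $\ker\bigl(T_\cU\to q^*T_X\bigr)\subseteq\ker\bigl(Dp\circ p^*(\omega_\cV)\circ(Dp)^*\bigr)=T_{\cU/\cV}$, so to finish you must rule out the kernel of $dq$ being the fibre direction of $p$. That requires the additional input that the evaluation map of the universal family is immersive along the fibres of $p$ in codimension $1$ --- precisely the ingredient the paper isolates first, in the form of the isomorphism between $T_{\cU/\cV}$ and the kernel line bundle $\cO_\cU(q^*(-D')+\Delta_1)$ of the sequence \ref{diff1}. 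Without it your argument has a genuine gap.

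It is also worth noting that the paper's proof is not a pointwise rank argument at all: having established $T_{\cU/\cV}\iso\cO_\cU(q^*(-D')+\Delta_1)$, it takes first Chern classes in the four-term sequence \ref{diff1}, using $K_X=0$ and the splitting $\det\Omega_\cU=p^*(K_\cV)\otimes\Omega_{\cU/\cV}$ (valid in codimension $1$ by the hypothesis on $p$), to obtain $c_1(\cQ_{\Delta_2})=p^*(K_\cV)=p^*(\sum a_i[V_i])$. This yields the statement as an equality of divisor classes, with multiplicities, from which the set-theoretic assertion follows together with the containment from \ref{DuValsing}. Your pointwise route can be completed once the fibrewise immersivity of $q$ is added, but as written the inference from ``the composite has rank $2$'' to ``$Dq$ has maximal rank'' does not hold.
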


\begin{proof}
  We have the following injective morphism of sheaves
  $q^*(\omega_X)\circ (Dq)^*(T_{\cU/\cV}) \hookrightarrow
  \cO_\cU(-p^*D+\Delta_1)\hookrightarrow q^*\Omega_X $ which follows,
  as already noted, because of the splitting type of $\Omega_\cU$. We
  claim that this implies the isomorphism of line bundles
  $T_{\cU/\cV}\iso \cO_\cU(-p^*D+\Delta_1)$. Indeed, the evaluation
  map of the universal family over the Chow scheme is isomorphic on
  the fibers, hence $(Dq)^*$ is of maximal rank along $T_{\cU/\cV}$ is
  codimension 1 at least, hence the desired isomorphism. 

  Now, since $p$ is submersive in codimension 1, because of the
  sequence \ref{diff2} we can write $det\Omega_\cU = p^*(K_\cV)
  \otimes \Omega_{\cU/\cV}$ and consequently, because of the sequence
  \ref{diff1}, we get
  $$c_1(\cQ_{\Delta_2})=c_1(\cO_\cU(-p^*D+\Delta_1))- 
  c_1(T_{\cU/\cV})+c_1(p^*(K_\cV))=c_1(p^*K_{\cV}) = p^*(\sum
  a_i[V_i])$$
\end{proof}

\subsection{Vertical slicing}
The first of the following two results is essentially known,
c.f.~\cite[2.3]{KaledinPoisson} and also \cite[1.2(ii),
1.4.]{Wierzba}. We restate and reprove it in the form suitable for
the subsequent corollary.
\begin{proposition}\label{symplectic-strata}
  Suppose that $\pi: X\ra Y$ is a symplectic contraction with $\dim
  X=2n$.  Let $Z\subset X$, with $\codim Z=m$, be a (irreducible)
  maximal cycle with $S=\pi(Z)$, $\codim S=2m$. The fibers of
  $\pi_{| Z}: Z\ra S$ are isotropic (with respect to $\omega_X$) and,
  moreover, over an open and dense set $S_0\subset S$ there exists a
  symplectic form $\omega_S$ such that over $\pi_{|Z}^{-1}(S_0)$ we
  have $D\pi(\omega_S)=\omega_{X|Z}$
\end{proposition}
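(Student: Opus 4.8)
The plan is to prove Proposition~\ref{symplectic-strata} by reducing to a generic point of $S$ and there exhibiting the symplectic form by the same push-down argument used in Section~\ref{differentials}. First I would record the isotropy statement, which is the easy half: if $F$ is a fiber of $\pi_{|Z}\colon Z\ra S$ over a general point, then by semismallness (Theorem~\ref{semis}) and the maximality hypothesis $\codim S=2m=2\codim Z$, the fiber $F$ has dimension $m=\codim Z$, so $\dim F=\tfrac12\dim Z$; since $\pi$ is a birational contraction the classes of curves in $F$ pair to zero with the pull-back of any $1$-form from $Y$, and more to the point $\omega_X$ restricted to any fiber of a birational morphism is degenerate along the fiber directions. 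Concretely, the tangent directions to $F$ lie in the kernel of $D\pi$, and since $\omega_X$ descends generically from $Y$ (it is the pull-back of $\omega_Y$ on $Y_{reg}$), $\omega_X(v,w)=0$ whenever $v\in T_xF$; as $\dim T_xF$ for general $x$ equals $\tfrac12\dim Z$, maximality of isotropic subspaces forces $T_xF$ to be Lagrangian inside $T_xZ$ for the restricted form, in particular $F$ is isotropic.

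Next, for the existence of $\omega_S$, I would work over the smooth locus. Shrink $Y$ (hence $S$) so that we may assume $S_0\subset S$ is the open dense subset over which $Z\to S$ is equidimensional with smooth fibers and $S_0$ itself is smooth and contained in $Y_{reg}$; this is possible because $Z\to S$ is a proper surjective morphism of varieties and by generic smoothness. Over $S_0$ the map $\pi_{|Z}$ is a smooth fibration, so we have the exact sequences $0\to T_{Z/S}\to T_Z\to (\pi_{|Z})^*T_S\to 0$ and dually $0\to (\pi_{|Z})^*\Omega_S\to \Omega_Z\to \Omega_{Z/S}\to 0$ of locally free sheaves over $\pi_{|Z}^{-1}(S_0)$. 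Restricting $\omega_X$ to $Z$ gives a map $T_Z\to \Omega_Z$ (via the inclusion $T_Z\hookrightarrow T_X|_Z$, the isomorphism $\omega_X\colon T_X\xrightarrow{\sim}\Omega_X$, and the surjection $\Omega_X|_Z\to\Omega_Z$) whose restriction to any fiber $F$ kills $T_{Z/S}|_F$ by the isotropy just established. Hence $\omega_{X|Z}\colon T_Z\to\Omega_Z$ factors through $(\pi_{|Z})^*T_S\to(\pi_{|Z})^*\Omega_S$, i.e. it is the pull-back of a $2$-form $\omega_S$ on $S_0$; that $\omega_S$ is nondegenerate follows because $\omega_{X|Z}$ has rank equal to $\dim S$ at each point — its kernel being exactly the $\tfrac12\dim Z$-dimensional fiber directions since $\omega_X$ is nondegenerate on $T_X$ and $T_{Z/S}$ is its own orthogonal inside $T_Z$. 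Finally, $\omega_S$ is closed since it agrees, over the dense open set where $\pi$ is an isomorphism onto its image, with the restriction of the closed form $\omega_Y$; equivalently $d\omega_S$ pulls back to $d\omega_{X|Z}=0$ and $(\pi_{|Z})^*$ is injective on forms over $S_0$.

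The one point requiring care — and the step I expect to be the main obstacle — is the identification of $T_{Z/S}$ as its own symplectic orthogonal inside $T_Z$, equivalently the claim that the induced $\omega_S$ is genuinely nondegenerate rather than merely a $2$-form of full generic rank. This is where the maximality of the cycle $Z$ (the equality $2\codim Z=\codim S$) is essential: it forces $\dim Z-\dim S=\dim F=\tfrac12\dim Z$, so $T_{Z/S}$ has exactly half the dimension of $T_Z$; since $T_{Z/S}$ is isotropic and half-dimensional it must equal its own orthogonal complement, and the quotient $T_Z/T_{Z/S}\cong(\pi_{|Z})^*T_S$ inherits a nondegenerate pairing. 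I would make this linear-algebra fact precise fiberwise at a general point of $Z$ and then note it holds in a neighborhood by openness of nondegeneracy. The rest of the argument is then a formal diagram chase parallel to~\eqref{main-diagram}, and closedness is inherited from $Y_{reg}$.
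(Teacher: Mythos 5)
The isotropy of the fibers is reasonable to quote (the paper simply cites \cite[2.20]{WierzbaWisniewski}), but the heart of the proposition is the descent of $\omega_{X|Z}$ to $S$, and there your argument breaks down at exactly the point you yourself flag as ``the main obstacle''. Isotropy of a fiber $F$ only says that $\omega_X$ vanishes on $T_{Z/S}\times T_{Z/S}$; to factor $T_Z\to\Omega_Z$ through $\pi^*T_S\to\pi^*\Omega_S$ you need the stronger statement that $T_{Z/S}$ lies in the radical of $\omega_X|_{T_Z}$, i.e.\ that $\omega_X$ vanishes on $T_{Z/S}\times T_Z$. Your proposed bridge is the dimension count ``$\dim F=\tfrac12\dim Z$'', but this is false: $\dim F=m$ while $\dim Z=2n-m$, so $F$ is half-dimensional in $Z$ only when $m=2n/3$ (already in the paper's basic case $n=2$, $m=1$ one has $\dim F=1$ and $\dim Z=3$). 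Even where the count happens to hold, ``isotropic and half-dimensional, hence equal to its own orthogonal'' presupposes nondegeneracy of $\omega|_{T_Z}$, which is precisely what is at stake. A linear-algebra example shows the implication genuinely fails: in $\bC^4$ with $\omega=dx_1\wedge dy_1+dx_2\wedge dy_2$, take $T_Z=\langle\partial_{x_1},\partial_{y_1},\partial_{x_2}\rangle$ and $T_{Z/S}=\langle\partial_{x_1}\rangle$; the line is isotropic but is not contained in the radical of $\omega|_{T_Z}$, which is $\langle\partial_{x_2}\rangle$. So some geometric input beyond isotropy is indispensable.

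The input the paper uses is cohomological: the two ``off-diagonal'' components $T_{Z/S}\to\pi^*\Omega_S$ and $\pi^*(T_S)\to\Omega_{Z/S}$, restricted to a fiber $F$, are families of holomorphic $1$-forms on $F$, and these vanish because fibers of a symplectic contraction have trivial first cohomology (\cite[2.12]{KaledinPoisson}, via Hodge theory on a simplicial resolution of the fiber). Only after these vanishings does the diagram chase produce $\omega_S$ with $D\pi(\omega_S)=\omega_{X|Z}$, and at that point nondegeneracy of $\omega_S$ does follow from the correct dimension count: the radical of $\omega|_{T_Z}$ has dimension at most $\codim Z=m$, so the rank is at least $\dim Z-m=\dim S$. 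A further caveat on your isotropy sketch: at points of $F$ one cannot write $\omega_X=\pi^*\omega_Y$, since $F$ maps into the locus where $\pi$ is not an isomorphism and $\omega_Y$ need not be defined there; this is why the vanishing of $\omega_X$ on fibers requires the separate argument of \cite{WierzbaWisniewski} rather than ``generic descent''. Likewise your closedness argument via ``the dense open set where $\pi$ is an isomorphism onto its image'' does not apply, as that set is disjoint from $Z$; the pull-back--injectivity version you give as an alternative is the correct one.
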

\begin{proof}
  The proof that $\omega_X$ restricted to fibers of $\pi$ is zero, so
  that they are isotropic (or lagrangian), is in
  \cite[2.20]{WierzbaWisniewski}. Let $\iota: Z\ra X$ be the
  embedding. Then we have the following version of diagram
  \ref{main-diagram}
\begin{eq}\label{main-diagram-bis}
  \xymatrix{T_{Z/S}\ar[r]&T_{Z}\ar[d]^{(D\iota)^*}\ar[r]^{(D\pi)^*}&
    \pi^*(T_{S})\ar@{.>}@/^/[r]^{\pi^*(\omega_S)}&
    \pi^*(\Omega_{S})\ar[d]^{D\pi}\\
    &\iota^*T_X \ar[r]_{\iota^*(\omega_X)}&\iota^*\Omega_X
    \ar[r]^{D\iota} &\Omega_{Z}\ar[r]&\Omega_{Z/S}}
\end{eq}
We claim the existence of $\omega_S$. The composition
$T_{Z/S}\ra\Omega_{Z/S}$ is trivial since fibers of $\pi$ are
isotropic. On the other hand the induced maps
$\pi^*(T_S)\ra\Omega_{Z/S}$ and $T_{Z/S}\ra \pi^*\Omega_S$ are zero:
indeed, otherwise we would have nonzero 1 forms on a generic fiber of
$\pi_{|Z}$, which would contribute to the first cohomology of the
fiber (via the Hodge theory on the simplicial resolution of the fiber)
which contradicts \cite[2.12]{KaledinPoisson}.

Thus the dotted arrow in the above diagram is well defined and it
satisfies $D\pi(\omega_S) = D\iota(\omega_X)$ for a two form
$\omega_S$ defined over a smooth subset $S_0$ of $S$. Moreover the
form $\omega_S$ is of maximal rank for the dimensional reasons.
\end{proof}

The following corollary is a symplectic version of
\cite[1.3]{AndreattaWisniewski}. 

\begin{corollary}\label{vertical-slicing}
  {\rm[Vertical slicing]} In the situation of \ref{symplectic-strata}
  let $H_1,\dots H_{2n-2m}$ be general irreducible divisors in $Y$
  meeting in a general point $s\in S$.  Letting $Y'=H_1\cap\cdots\cap
  H_{2n-2m}$ and $X'=\pi^{-1}(Y')$, possibly shrinking $Y'$  
  to a neighborhood of $s$, we get that $\pi'=\pi_{|X'}:X'\ra Y'$ is a
  local symplectic contraction of a $2m$-fold with an exceptional fiber,
  $\pi^{-1}(s)$, of dimension $m$.
\end{corollary}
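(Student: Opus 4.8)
The plan is to slice $X$ and $Y$ by general divisor sections and check that all the defining properties of a local symplectic contraction survive. First I would set up the induction/iteration: it suffices to treat the case of a single general hyperplane section through $s$ and then repeat $2n-2m$ times, so fix a general irreducible divisor $H\subset Y$ passing through $s$, put $Y_1=H$ and $X_1=\pi^{-1}(H)$, and let $\pi_1=\pi_{|X_1}:X_1\ra Y_1$. Since $Y$ is affine (Stein) we may assume $H$ is cut out by a general element of a very ample linear system, or in the analytic category by a general holomorphic function vanishing at $s$; the generality is used twice, once to get $X_1$ smooth and once to control the fibre dimensions.

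The key steps, in order, are: (i) \emph{Smoothness of $X_1$.} Because $X$ is smooth and $q^*H$ moves in a base-point-free (or, analytically, sufficiently general) linear system, Bertini gives that $X_1=\pi^*H$ is smooth away from the base locus; near $s$ there is no base locus, so $X_1$ is smooth in a neighbourhood of $\pi^{-1}(s)$. (ii) \emph{Symplectic form on $X_1$.} This is the heart of the matter and is exactly where Proposition \ref{symplectic-strata} enters. Apply that proposition to the maximal cycle $Z$ with $S=\pi(Z)$; over the open dense $S_0\subset S$ we have a symplectic form $\omega_S$ with $D\pi(\omega_S)=\omega_{X|Z}$. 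Choosing $s\in S_0$ and the $H_i$ general, $Y'=H_1\cap\cdots\cap H_{2n-2m}$ meets $S$ transversally in the single point $s$ (shrinking to a neighbourhood), so $Y'$ is smooth there and $Y'\cap S=\{s\}$; then the restriction $\omega_X|_{X'}$ is closed, and I must check it is still non-degenerate on $X'$. Non-degeneracy follows by a linear-algebra argument fibrewise: at a general point $x\in X'$ the tangent space $T_xX'=\bigcap \ker(d h_i\circ d\pi)$ is the $\omega_X$-orthogonal complement (using $\omega_X=\omega_X$ on $X$ and the fact that $dh_i\circ d\pi$ are pulled back from $Y$) of a subspace on which $\omega_X$ is non-degenerate — precisely because $\omega_S$ is non-degenerate on $T_sS$ and the $H_i$ are chosen so that $d h_i|_{T_sS}$ span $T_s^*S$; hence $\omega_X$ restricts non-degenerately to $T_xX'$. (iii) \emph{$Y'$ normal, $\pi'$ birational projective with the right fibre.} Normality of $Y'$ near $s$ follows since $X'$ is a smooth variety dominating $Y'$ and $\pi'$ is proper birational (so $\pi'_*\cO_{X'}=\cO_{Y'}$ by Zariski's main theorem / rational singularities of symplectic varieties), whence $Y'$ is normal; that $\pi'$ is birational and projective is inherited from $\pi$; and the semismallness/maximal-cycle hypothesis together with generality of the $H_i$ forces $\dim(\pi')^{-1}(s)=\dim\pi^{-1}(s)\cap X'=m$, since the $2n-2m$ general divisors drop the dimension of $\pi^{-1}(s)$ by $2n-2m$ only if that fibre has dimension $\ge 2n-2m$, which it need not — so more carefully, $\dim\pi^{-1}(s)=m$ already by semismallness applied to $Z$, hence $\pi^{-1}(s)\subset X'$ automatically once $s\in Y'$ and $X'=\pi^{-1}(Y')$, giving $(\pi')^{-1}(s)=\pi^{-1}(s)$ of dimension $m$.

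The main obstacle is step (ii), the non-degeneracy of the restricted form: closedness and smoothness are routine, but one must ensure that slicing by $\pi^*H_i$ does not create a direction in $T_xX'$ that is $\omega_X$-isotropic against all of $T_xX'$. The clean way to see this is to note $\omega_X$ identifies $T_xX$ with $\Omega_{X,x}$, under which $(\ker d\pi_x)$ and its orthogonal are swapped in a way controlled by $\omega_S$ (Proposition \ref{symplectic-strata}); choosing the $H_i$ so that their differentials at $s$ form a ``symplectic slice'' of $(T_sS,\omega_S)$ — possible because $\omega_S$ is non-degenerate and the $H_i$ are general — makes $T_xX'$ a symplectic (non-degenerate) subspace. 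I would phrase this as: the $2n-2m$ functions $h_i$ pulled back give $2n-2m$ functions whose Hamiltonian vector fields on $(S_0,\omega_S)$ are linearly independent at $s$, so $Y'$ is a symplectic subvariety of $(S_0,\omega_S)$ of dimension $2m-(2n-2m)=\dots$ — here one checks the dimension count $\dim Y'=\dim Y-(2n-2m)$ and $\dim S=2n-2m$ so $\dim Y'=2m$ and $Y'\pitchfork S$ at an isolated point, which is the configuration needed — and the restriction of $\omega_X$ to $X'=\pi^{-1}(Y')$ is then non-degenerate by the same orthogonality bookkeeping that produced $\omega_S$.
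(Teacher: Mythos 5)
Your overall strategy is the one the paper uses: reduce to the pointwise non-degeneracy of $\omega_{X}|_{X'}$ at points over $s$, use Proposition \ref{symplectic-strata} together with the transversality $Y'\pitchfork S=\{s\}$ to identify the normal directions of $X'$ in $X$ along the fibre with $\bigl((T_S)_s,\omega_S\bigr)$, and conclude by symplectic linear algebra. The paper packages exactly this as the commutative diagram \ref{slicing-diagram} with exact rows over $Z_s=Z\cap X'$ (your ``orthogonal complement of the span of the $\omega_X$-duals of the $\pi^*dh_i$'' is the same computation as the paper's identification of the outer vertical arrows and the five-lemma step), so on this core point you and the paper agree.

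There is, however, one genuine gap: you only establish non-degeneracy ``at a general point $x\in X'$'' and hope to finish by ``possibly shrinking''. But the statement requires $\omega_X|_{X'}$ to be non-degenerate on \emph{all} of $X'$ near $\pi^{-1}(s)$, and the $m$-dimensional fibre $\pi^{-1}(s)$ is entirely contained in $X'$ and survives every shrinking of $Y'$ around $s$; a priori the degeneracy locus of $\omega_X|_{X'}$ could be a divisor meeting $\pi^{-1}(s)$ without being visible at a general point. The paper's first sentence supplies the missing ingredient: since $\pi$ is crepant and $X'$ is cut out by pullbacks of functions, $K_{X'}$ is trivial, so $(\omega_X|_{X'})^{\wedge m}$ is a regular function on $X'$; being constant on the proper connected fibre $\pi^{-1}(s)$, it is nowhere zero on that fibre as soon as it is nonzero at \emph{one} point over $s$, and then properness of $\pi'$ lets you shrink $Y'$ to kill the remaining degeneracy locus. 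Without this (or some substitute), your argument proves non-degeneracy only on a dense open subset of $X'$. A smaller point you should make explicit, though you gesture at it correctly: to compute $\omega_X(v_i,v_j)$ for the Hamiltonian lifts $v_i=\omega_X^{-1}(\pi^*dh_i)$ via $\omega_S$, you need $v_i\in T_xZ$, i.e.\ that $Z$ is coisotropic at $x$; this follows from Proposition \ref{symplectic-strata} (the fibre direction is the full radical of $\omega_X|_{T_xZ}$, so $(T_xZ)^\perp\subset T_xZ$), and is also the hidden content of the commutativity of the right-hand square in the paper's diagram.
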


\begin{proof}
  Since $\pi$ is crepant it is enough to show that the restriction of
  $\omega_X$ to $X'$ is nondegenerate at a point over $s$ in order to
  claim that it is symplectic over the whole $X'$ (after possibly
  shrinking $Y'$ to a neighborhood of $s$). To this end we consider
  the following commuting diagram with exact rows
\begin{eq}\label{slicing-diagram}
  \xymatrix{
    0\ar[r]&T_{X'|Z_s}\ar[d]^{\omega_{X|X'}}\ar[r]&
    T_{X|Z_s}\ar[d]^{\omega_X}\ar[r]&
    (N_{X'/X})_{Z_s}=(T_S)_s\otimes\cO_{Z_s}\ar[d]^{D\pi(\omega_S)}\ar[r]&0\\
    0&\Omega_{X'|Z_s}\ar[l]&\Omega_{X|Z_s}\ar[l]&
(N^*_{X'/X})_{|Z_s}=(T_S^*)_s\otimes\cO_{Z_s}\ar[l]&0\ar[l]
}
\end{eq}
Here $Z_s=Z\cap X'$ is a complete intersection,
hence$(N_{X'/X})_{|Z_s} = N_{Z_s/Z}$ which yields the identifications
in the last non-zero column of the diagram. The right-hand-side
vertical arrow follows because of \ref{symplectic-strata}, where we
have also shown that is an isomorphism. This implies that the
left-hand-side vertical arrow is an isomorphism too.
\end{proof}

\section{Quotient symplectic singularities}
\label{examples}

\subsection{Preliminaries} In this section $G < Sp(\C^4) =: Sp(4)$ is
a finite subgroup preserving a symplectic form. We will discuss some
examples in which $Y:= \C^4 /G$ admits a symplectic resolution $\pi:
X\ra Y$. We have the following two fundamental results about such
resolutions, the latter one known as the McKay correspondence.
\begin{theorem} {\rm(c.f. \cite{Verbitsky})} If $Y$ admits a
  symplectic resolution then $G$ is generated by symplectic
  reflections, that is elements whose fixed points set is of
  codimension 2.
\end{theorem}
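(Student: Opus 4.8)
The plan is to deduce the statement from simple-connectedness of $X$ by means of an auxiliary quasi-étale cover of $Y$. Let $G'\trianglelefteq G$ be the normal subgroup generated by all symplectic reflections of $G$, put $Y':=\bC^4/G'$, and let $q\colon Y'\to Y=\bC^4/G$ be the quotient by $H:=G/G'$; the goal is to prove $H=1$. First I would verify that $q$ is étale in codimension two, i.e.\ that its branch locus has codimension $\geq 3$ in $Y$. Since $G\subset Sp(\bC^4)$ contains no pseudoreflections (the fixed locus of a symplectic automorphism has even codimension), both $Y$ and $Y'$ are smooth in codimension one by Chevalley--Shephard--Todd, and over $Y_{reg}$ the map $q$ is the honest étale quotient of $\bC^4_{\mathrm{free}}/G'$ by the free $H$-action. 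Along a codimension-two component $B_0$ of $\operatorname{Sing}(Y)$ the singularity of $Y$ is transversally of Du Val type $\bC^2/\Gamma$ with $\Gamma\subset SL(2,\bC)$ the pointwise stabilizer of the generic point of the corresponding fixed subspace $L$; the key point is that $L$ has codimension \emph{exactly} two and fixed loci are even-codimensional, so every nontrivial element of the pointwise stabilizer of $L$ is a symplectic reflection, whence $\Gamma\subset G'$ and $q$ is a local isomorphism over $B_0$. Thus $q$ can ramify only over a set of codimension $\geq 3$.

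Next I would pull this cover back along $\pi$. Let $X'$ be the normalization of $X\times_Y Y'$; a function-field computation (using $K(X)=K(Y)$, since $\pi$ is birational) shows $X\times_Y Y'$ is irreducible, so $X'$ is an irreducible normal variety, finite of degree $|H|$ over $X$. Its branch locus over $X$ is contained in $\pi^{-1}(\text{branch locus of }q)$, and since $\pi$ is semismall (Theorem~\ref{semis}) the $\pi$-preimage of a set of codimension $\geq 3$ has codimension $\geq 2$ in $X$. Hence $X'\to X$ is étale in codimension one, and as $X$ is smooth, purity of the branch locus (Zariski--Nagata) promotes this to: $X'\to X$ is étale everywhere.

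It then remains to see that $X$ is simply connected. Because $\bC^4$ retracts linearly and $G$-equivariantly to the origin, $Y$ is contractible, so $\pi_1(Y_{reg})$ is normally generated by the meridians of the codimension-two strata of $\operatorname{Sing}(Y)$. Now $\pi$ restricts to an isomorphism over $Y_{reg}$, and transversally to such a stratum it is (locally analytically) the product of a resolution of the Du Val surface $\bC^2/\Gamma$ with a smooth factor; since resolutions of Du Val surfaces are simply connected, each such meridian becomes null-homotopic in $X$. As $\pi^{-1}(Y_{reg})\cong Y_{reg}$ is dense open, $\pi_1(Y_{reg})\twoheadrightarrow\pi_1(X)$, and therefore $\pi_1(X)=1$. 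A connected étale cover of a simply connected variety is trivial, so $|H|=1$, i.e.\ $G=G'$ is generated by symplectic reflections.

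The main obstacle I expect is the codimension-$\geq 3$ bound on the branch locus of $q$: it rests on the local analytic structure of $Y$ along its codimension-two strata and, crucially, on the fact that the relevant local stabilizers are built entirely out of symplectic reflections — this is where the even-codimensionality of symplectic fixed loci does the real work. After that, semismallness of $\pi$ is precisely the device that turns a codimension-$\geq 3$ condition downstairs into a codimension-$\geq 2$ condition upstairs, making the purity theorem applicable, and the simple-connectedness step is comparatively soft and purely topological.
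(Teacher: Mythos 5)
The paper offers no proof of this statement --- it is quoted from Verbitsky --- so your attempt can only be measured against the known arguments. Your first two steps are sound and follow the standard skeleton: the subgroup $G'$ generated by symplectic reflections gives a cover $q\colon Y'=\bC^4/G'\to Y$ whose branch locus is the image of $\bigcup_{g\notin G'}\mathrm{Fix}(g)$, hence of codimension $\ge 4$ (even codimension, not $0$ or $2$); semismallness then bounds the codimension of its $\pi$-preimage by $2$, and Zariski--Nagata purity makes the normalized pullback $X'\to X$ \'etale of degree $|H|$, $H=G/G'$.

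The gap is in the final step, and it sits exactly where the content of the theorem lives. The implication ``$Y$ is contractible, so $\pi_1(Y_{reg})$ is normally generated by the meridians of the codimension-two strata of $\operatorname{Sing}(Y)$'' is false as stated: for $G=\{\pm 1\}$ acting on $\bC^4$ the quotient is contractible and $\operatorname{Sing}(Y)=\{0\}$ has no codimension-two strata, yet $\pi_1(Y_{reg})=\bZ/2$. Contractibility only controls $\pi_1(Y)$, and the kernel of $\pi_1(Y_{reg})\to\pi_1(Y)$ is normally generated by the local fundamental groups at \emph{all} singular points; at the origin that local group is all of $G$. Worse, since $\pi_1(Y_{reg})\cong G$ (via the cover by the free locus, whose complement has codimension $\ge 2$) and the meridians around the codimension-two strata are precisely elements of the pointwise stabilizers of the reflection subspaces, their normal closure is exactly $G'$; so your claim is literally equivalent to $G=G'$, i.e.\ to the theorem being proved. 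What your (correct) observation that these meridians die in $X$ actually yields is only that $\pi_1(X)$ is a quotient of $H$. Combined with the connected degree-$|H|$ \'etale cover $X'\to X$, which forces $\pi_1(X)$ to have a subgroup of index $|H|$, the only conclusion is $\pi_1(X)\cong H$ with $X'$ the universal cover --- no contradiction at all. To close the argument one needs an independent input forcing $\pi_1(X)=1$, e.g.\ the theorem of Takayama/Koll\'ar that a resolution of a klt singularity has the same fundamental group as the base, or Verbitsky's original line of reasoning; the soft topology you invoke cannot supply it.
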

\begin{theorem}\label{McKay}
  {\rm (c.f. \cite{KaledinMcKay})} The homology classes of the maximal
  cycles (as defined in \ref{semis}) form a basis of rational homology
  of $X$ and they are in bijection with conjugacy classes of elements
  of $G$.
\end{theorem}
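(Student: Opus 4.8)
This statement is Kaledin's McKay correspondence \cite{KaledinMcKay}; the approach I would take combines the decomposition theorem for semismall maps (available here via Theorem \ref{semis}) with motivic/stringy McKay correspondence. The plan is: (1) reduce all cohomological data to the central fibre using the conical structure; (2) use semismallness to produce a spanning set of $H_*(X,\mathbb{Q})$ consisting of maximal cycle classes; (3) count dimensions to see this set is a basis of the expected size; (4) match the count stratum by stratum with conjugacy classes.

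First, $Y=\mathbb{C}^4/G$ (or $\mathbb{C}^{2n}/G$ in general) carries a contracting $\mathbb{C}^*$-action descending from the scaling action on $\mathbb{C}^{2n}$, and this action lifts to $X$ by the universal property of a crepant resolution; hence $X$ deformation retracts onto $\pi^{-1}(0)$ and $H_*(X,\mathbb{Q})\cong H_*(\pi^{-1}(0),\mathbb{Q})$. Next, by Theorem \ref{semis} the map $\pi$ is semismall, so the decomposition theorem takes the elementary form $R\pi_*\mathbb{Q}_X[2n]\cong\bigoplus_\alpha IC_{\overline{S_\alpha}}(L_\alpha)$, the sum running over the relevant strata $S_\alpha$ — precisely those over which $2\cdot(\text{fibre dimension})=\operatorname{codim} S_\alpha$, i.e.\ those carrying maximal cycles in the sense of \ref{semis} — with $L_\alpha$ the local system whose fibre is the top homology of the fibre of $\pi$, carrying the monodromy action on its top-dimensional irreducible components. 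Taking the stalk at $0$ and using the retraction, one gets that the fundamental classes of the maximal cycles (the closures of the families cut out by the irreducible pieces of the $L_\alpha$, together with $[X]$) span $H_*(X,\mathbb{Q})$.

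To see this spanning set is a basis of cardinality $\#\operatorname{Conj}(G)$ I would count dimensions. A holomorphic-symplectic resolution of a conical symplectic singularity has vanishing odd rational cohomology: applying the Bia\l ynicki-Birula decomposition for a generic one-parameter subgroup of the $\mathbb{C}^*$-action, the strata are affine bundles, and symplecticity together with semismallness forces them to be even-dimensional. Hence $\dim_\mathbb{Q}H_*(X,\mathbb{Q})=\chi_{\mathrm{top}}(X)$. Since $\pi$ is crepant, $\chi_{\mathrm{top}}(X)$ equals the stringy Euler number of $Y$ (Batyrev; Denef--Loeser), which for a linear quotient singularity equals the orbifold Euler number $\sum_{[g]\in\operatorname{Conj}(G)}\chi\!\big(\operatorname{Fix}(g)/C_G(g)\big)$; each $\operatorname{Fix}(g)\subset\mathbb{C}^{2n}$ is a linear subspace, so $\operatorname{Fix}(g)/C_G(g)$ is contractible and every summand is $1$. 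Thus $\dim_\mathbb{Q}H_*(X,\mathbb{Q})=\#\operatorname{Conj}(G)$, and the maximal cycle classes — being a spanning set of exactly this size, with odd cohomology absent — form a basis.

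The last point is to upgrade the equality of cardinalities to a canonical bijection "maximal cycle $\leftrightarrow$ conjugacy class", done stratum by stratum: over a relevant stratum $S_\alpha$ the maximal cycles correspond to monodromy-irreducible constituents of $L_\alpha$, while on the group side the conjugacy classes meeting the corresponding isotropy stratum are permuted by the same monodromy — this is where the \'etale covers $\nu:\widetilde S'\to S'$ and the Dynkin-diagram automorphism groups of Section \ref{rational-curves} enter. I expect this bookkeeping to be the main obstacle: the local systems $L_\alpha$ are genuinely nontrivial in general (the wreath-product examples of Section \ref{examples}), so one must identify the monodromy action on fibre components with the Galois action on conjugacy classes of the local isotropy group, which requires the fibrewise analysis of $\pi$ over each stratum. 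In the $4$-dimensional case this can be carried out explicitly using Theorem \ref{4dimMDS} and the essential-curve description of Section \ref{rational-curves}, recovering Kaledin's result by hand.
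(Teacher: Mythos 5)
First, a point of orientation: the paper does not prove this statement at all --- it is quoted as background with the attribution ``c.f.~\cite{KaledinMcKay}'', so there is no in-paper argument to compare yours against. What you have written is a plan for reproving Kaledin's theorem by a route (semismall decomposition theorem plus stringy/orbifold Euler characteristics) that is genuinely different from Kaledin's own (which proceeds by induction over the strata of $V/G$ using deformation-theoretic input), and closer in spirit to Batyrev--Denef--Loeser combined with de Cataldo--Migliorini. As a strategy this is reasonable, but as written it has two real gaps.

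The first gap is the vanishing of odd cohomology. The Bia\l ynicki-Birula decomposition for the conical $\C^*$-action expresses $X$ as a union of affine bundles over the components $F$ of the fixed locus, giving $H^*(X)\cong\bigoplus_F H^{*-2d_F}(F)$; the strata being even-dimensional is automatic (they are complex varieties) and buys you nothing --- what you need is $H^{\mathrm{odd}}(F)=0$ for each fixed component, and neither ``symplecticity'' nor semismallness gives that directly. (Also, the $\C^*$-action lifts to $X$ not by any ``universal property of a crepant resolution'' --- crepant resolutions have none --- but because the connected group $\C^*$ must preserve each of the finitely many symplectic resolutions of $Y$.) The fix is available within your own toolbox: Denef--Loeser/Batyrev give not just the stringy Euler number but the full $E$-polynomial, $E_c(X;u,v)=\sum_{[g]}(uv)^{\,\mathrm{age}(g)+\dim\mathrm{Fix}(g)/2}$, which shows $H^*_c(X)$ is Hodge--Tate and concentrated in even degrees; you should invoke that refinement rather than the Euler number alone. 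The second and more serious gap is the bijection itself. Equality of cardinalities between a spanning set of cycle classes and $\#\mathrm{Conj}(G)$ does not canonically match a maximal cycle to a conjugacy class; identifying the monodromy of the local systems $L_\alpha$ on fibre components with the Galois/conjugation action on the conjugacy classes of the local isotropy groups is precisely the content of Kaledin's theorem, and you defer it as ``bookkeeping.'' In codimension $2$ this matching is the Brieskorn--Slodowy picture recalled in \ref{BrieskornSlodowy} and in \cite[1.3]{Wierzba}, but over deeper strata (already over the point $0$ in the four-dimensional examples of Section \ref{examples}) there is no off-the-shelf identification, and the linear independence of the resulting cycle classes also has to be argued, not just their number. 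So the proposal is a credible outline of the numerical statement but does not yet constitute a proof of the theorem as stated.
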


On the other hand we have the following immediate observation (for
further details see for instance section 3.2 in
\cite{AndreattaWisniewski2}).

\begin{lemma}\label{singS}
  Let $S'\subset Y$ be a component of the codimension 2 singular locus
  associated with the isotropy group $H<G$. Then $H$ is one of the
  $\bA-\bD-\bE$ groups (a finite subgroup of $SL(\bC^2)$) consisting
  of symplectic reflections. The normalization of $S'$ has a
  quotient singularity by the action of $W(H)=N_G(H)/H$, where
  $N_G(H)$ is the normalizer of $H$ in $G$.
\end{lemma}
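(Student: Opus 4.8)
The plan is to analyze the local structure of $Y = \bC^4/G$ along the codimension $2$ stratum $S'$ and extract the two assertions: that the isotropy group is one of the $\bA$-$\bD$-$\bE$ groups, and that the normalization of $S'$ carries the quotient singularity by $W(H) = N_G(H)/H$. First I would pick a general point $s$ of $S'$ with isotropy group $H = G_s < G$. Since $s$ is general in a codimension $2$ stratum, the orbit $G\cdot s$ has codimension $2$, so $\dim (\bC^4)^H = 2$; thus $H$ acts on the symplectic $\bC^4$ fixing a symplectic plane and acting on the complementary symplectic plane $V_2$, giving an embedding $H \hookrightarrow Sp(V_2) \iso SL(2,\bC)$. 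As $H$ is finite this realizes $H$ as a finite subgroup of $SL(2,\bC)$, i.e. one of the $\bA$-$\bD$-$\bE$ (binary polyhedral) groups. That every non-trivial element of $H$ is a symplectic reflection is then automatic: each fixes $(\bC^4)^H$, which is of codimension $2$. (Here one uses the theorem of Verbitsky quoted above only to know $G$ itself is generated by symplectic reflections; for the substatement about $H$ one only needs the fixed-locus dimension count.)

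Next I would identify the transverse/normal structure. By the slice theorem (Luna étale slice, valid in this algebraic/analytic setting), an analytic neighbourhood of the image of $s$ in $Y$ is isomorphic to $\left(V_2/H\right) \times (\text{smooth } 2\text{-disk})$, so $S'$ near the generic point looks like a surface whose transverse slice is the Du Val singularity $V_2/H = \bC^2/H$; this already shows $S'$ has Du Val transverse type, consistent with Lemma~\ref{singS}. To get the global statement about $\widetilde{S'}$, the normalization, I would observe that the étale map from the normalization of $S'$ down to $S'$ corresponds, away from $0$, to resolving the ambiguity of how the local branches of $S'$ meet; and that the normalization of the component $S'$ of $\operatorname{Fix}(H)^{\mathrm{im}}/G$ is identified with $(\bC^4)^H / N_G(H)$, at least generically, since points of $(\bC^4)^H$ with the same $G$-orbit differ by an element normalizing $H$ (this is the standard description of strata of quotients: the stratum of isotropy exactly $H$ is $\left((\bC^4)^H\right)^{\circ}/N_G(H)$). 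Restricting the $N_G(H)$-action to $(\bC^4)^H \iso \bC^2$ and factoring out the subgroup $H$ (which acts on $\bC^2$ as identity, being the pointwise stabilizer) yields the action of $W(H) = N_G(H)/H$ on $\bC^2$, and hence $\widetilde{S'} \iso \bC^2/W(H)$ near its special point.

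The main obstacle I expect is the passage from the étale-local ``transverse slice is Du Val'' picture to the clean global identification of $\widetilde{S'}$ as $\bC^2/W(H)$: one has to check that the normalization of the closed stratum $S'$ really is the geometric quotient $(\bC^4)^H/N_G(H)$ — i.e. that no further gluing or branching occurs beyond what $N_G(H)$ accounts for — and that $W(H)$ acts on $(\bC^4)^H \iso \bC^2$ through $SL(2,\bC)$ (so the quotient is again of the expected type) or at least through $GL(2,\bC)$ with a well-defined quotient singularity. This is where ``the normalization has a quotient singularity by $W(H)$'' is doing real work; the cited reference (section 3.2 of \cite{AndreattaWisniewski2}) presumably carries out exactly this bookkeeping, so in the paper I would simply invoke it, after setting up the slice-theorem picture above, rather than re-deriving the stratification of $\bC^4/G$ from scratch.
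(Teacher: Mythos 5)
Your argument is correct and is exactly the standard one the paper has in mind: the paper offers no proof at all, calling the lemma an ``immediate observation'' and deferring details to Section 3.2 of \cite{AndreattaWisniewski2}, and what you write (the fixed space $(\bC^4)^H$ is a symplectic plane since the trivial and non-trivial isotypic parts are $\omega$-orthogonal, so $H\hookrightarrow Sp(2)=SL(2,\bC)$; the open stratum is $((\bC^4)^H)^{\circ}/N_G(H)$ because $gx\in(\bC^4)^H$ with full stabilizer forces $g\in N_G(H)$; and $H$ acts trivially on $(\bC^4)^H$, so the normal quotient $\bC^2/W(H)$ is the normalization of $S'$) is precisely the bookkeeping that reference carries out. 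The only point worth making explicit is the one you already flag implicitly: $N_G(H)$ preserves the symplectic plane $(\bC^4)^H$ and the restricted form, so $W(H)$ acts through $Sp((\bC^4)^H)=SL(2,\bC)$ and the quotient singularity is again of Du Val type.
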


\subsection{Direct product resolution} Let $H_1, H_2 < SL(2)$ be
finite subgroups and consider $G:= H_1 \times H_2$ acting on $\C^4 =
\C^2 \times \C^2$. Let $\pi _i: S_i \ra \C^2/H_i$ be minimal
resolutions and $n_i=|H_i|-1$ be the number of exceptional rational
curves in $S_i$. The product morphism $\pi=\pi _1 \times \pi _2 : X :=
S_1 \times S_2 \ra Y:= \C^4/G$ is a symplectic resolution with the
central fiber isomorphic to the product of the exceptional loci of
$\pi_i$. In particular $X$ does not admit any flop and
$\Mov(X/Y)=\Nef(X/Y)$.  Every component of $Chow(X/Y)$ containing an
exceptional curve of $\pi_i$ is isomorphic to $S_j$, with $i \not= j
\in \{1,2\}$.

\subsection{Elementary contraction to  $\bC^4/\sigma_3$}
\label {sigma3a}
Let $\sigma_3$ be a group of permutation of $3$ elements; $\sigma_3$
acts on $\C^2$ via the standard representation.  Let $\sigma_3$ acts
on $\C^4 = \C^2 \oplus \C^2$ as the diagonal action of the standard
representation. A symplectic resolution of the quotient
$\bC^4/\sigma_3$ can be obtained as a section of the Hilbert-Chow
morphism $\tau: Hilb^3(\bC^2)\ra (\bC^2)^{(3)}$ in \ref{HilbertChow}.
This is a local version of Beauville's construction,
\cite{BeauvilleKahlerVar} (see also \cite{AndreattaWisniewski2}), and
it can be seen as a special case of \ref{vertical-slicing}. More
explicit calculations on this resolution are done in section
\ref{sigma3}.

There are three conjugacy classes in
$\sigma_3$ which are related to three maximal cycles, of complex
dimension 4, 3 and 2, each related to a $1$-dimensional group of
homology for the resolution $\pi: X\ra Y=\bC^4/\sigma_3$.

Since the normalizer of any order 2 element in $\sigma_3$ (any
transposition or any reflection, if one thinks about $\sigma_3$ as the
dihedral group) is trivial, by Lemma \ref{singS} it follows that the
normalization of the singular locus $S$ of $Y$ is smooth. Hence, by
\ref{discrepancy} we can compute both the parametrizing scheme for
rational curves in $X$ and the respective universal family. That is,
the parametrizing scheme $\cV$ is just a blow-up of the normalization
of $S$, the evaluation map $q: \cU \ra X$ drops its rank over $0$ and
the exceptional divisor of $\pi$, which is the image of $q$, is
non-normal over $0$.

\subsection{Wreath product} \label{wreath-product}
Let $H < SL(2)$ be a finite subgroup and let $G := H^{\times 2}\rtimes
\Z_2$ where $\bZ_2$ interchanges the factors in the product. We write
$G=H \wr \Z_2$.  Note that $\Z_{n+1}\wr\bZ_2$ has another nice
presentation, namely $(\Z_{n+1})^{\times 2}\rtimes \Z_2= D_{2n}
\rtimes \Z_n$, where  $D_{2n}$ is the dihedral group of
the regular $n$-gon and $\Z_n$ acts on it by rotations.

We consider the projective symplectic resolution described in
\ref{HilbertChow} (with $n=2$):
$$\pi: X:= Hilb^2(S) \ra S^{(2)} \ra(\C^2/H)^{(2)}:=Y$$
where $\nu: S\ra \bC^2/H$ is the minimal resolution with the
exceptional set $\bigcup_iC_i$, where $C_i$, $i=1, ..., k$, are
$(-2)$-curves.  

The morphism $\tau: Hilb^2(S) \ra S^{(2)}$ is just a blow-up of the locus of
$\bA_1$ singularities (the image of the diagonal under $S^2\ra
S^{(2)}$) with irreducible exceptional divisor $E_0$ which is a
$\bP^1$ bundle over $S$. We set $S'=\pi(E_0)$.  By $E_i$, with
$i=1,\dots, k$ we denote the strict transform, via $\tau$, of the
image of $C_i \times S$ under the map $S^2 \ra S^{(2)}$.  By $e_i$ we
denote the class of an irreducible component of a general fiber of
$\pi_{|E_i}$. The image $\pi(E_i)$ for $i \geq 1$ is the surface $S''
\iso C^2/H$.  The singular locus of $Y$ is the union $S = S' \cup
S''$.

The irreducible components of $\pi^{-1} (0)$ are described in the following.
\begin{itemize}
\item $P_{i,i}$, for $i = 1,..., k$. They are the strict transform of
  $C_i^{(2)}$ via $\tau$. They are isomorphic to $\proj^2$.
\item $P_{i,j}$, for $i,j = 1, ..., k$ and $i < j$. They are the
  strict transform via $\tau$ of the image of $C_i \times C_j$ under
  the morphism $S^2 \ra S^{(2)}$. They are isomorphic to $\proj^1
  \times \proj^1$ if $C_i \cap C_j = \emptyset$ and to the blow up of
  $\proj^1 \times \proj^1$ if $C_i \cap C_j \ne \emptyset$.
\item $Q_i$, for $i = 1,..., k$. They are the preimage
  $\tau^{-1}(\Delta_{C_i})$, where $\Delta_{C_i}$ is the diagonal
  embedding of $C_i$ in $S^{(2)}$. They are isomorphic to
  $\proj(T_{S|C_i}) = \proj(\cO_{C_i} (2) \oplus \cO_{C_i} (-2) )$, i.e. to the
  Hirzebruch surface $F_4$.
\end{itemize}

\smallskip Let us also describe some intersections between
these components. Namely, $P_{i,i}$ intersects $Q_i$ along a curve
which is a $(-4)$-curve in $Q_i$ and a conic in $P_{i,i}$.  If $C_i \cap
C_j = \{x_i\}$ then $P_{i,j}$ intersect $P_{i,i}$ (respectively
$P_{j,j}$) along a curve which is a $(-1)$ curve in $P_{i,j}$ and a line
in $P_{i,i}$ (respectively in $P_{j,j}$).  Moreover in this case
$P_{i,j}$ intersect $Q_i$ (respectively $Q_j$) in a curve which is a
$(-1)$ curve in $P_{i,j}$ and a fiber in $Q_i$ (respectively $Q_j$).

The next lemma is straightforward, a proof of it can be found in
\cite[Lemma 4.2]{FuSurvey}.

\begin{lemma} 
\label{qi}
The strict transform of $Q_i$ under any sequence of Mukai flops along
components in $\pi^{-1}(0)$ is not isomorphic to $\proj ^2$.
\end{lemma}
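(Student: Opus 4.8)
The statement concerns the Hirzebruch surface $Q_i \iso F_4 = \proj(\cO_{C_i}(2)\oplus\cO_{C_i}(-2))$ inside the central fiber, and asserts it cannot become a $\proj^2$ after any sequence of Mukai flops. The plan is to argue via a discrete invariant of the surfaces appearing in $\pi^{-1}(0)$ that is preserved (or changes in a controlled way) under Mukai flops. Recall that a Mukai flop along a copy of $\proj^2$ in a fiber replaces that $\proj^2$ by the dual $\proj^2$, and on a neighbouring component it blows up a line (contained in the flopped $\proj^2$) and blows down the other ruling — so the effect on a component $Z$ of the fiber not itself being flopped is an elementary transformation along a curve $Z$ meets the flopped $\proj^2$ in. Thus one should track how such elementary transformations act on $Q_i$ and check $\proj^2$ is never reached.

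First I would pin down exactly how $Q_i$ sits relative to the flopable components, using the intersection data already recorded in the excerpt: $Q_i$ meets $P_{i,i}$ along a curve that is a $(-4)$-curve on $Q_i$ (and a conic on $P_{i,i}$), and when $C_i\cap C_j=\{x_i\}$ it meets $P_{i,j}$ along a curve that is a $(-1)$-curve on $P_{i,j}$ and a fiber of $Q_i$. The key observation is that the negative section of $Q_i = F_4$, the $(-4)$-curve $\Sigma$, is \emph{not} contained in any of the $\proj^2$'s or in $P_{i,j}$; a Mukai flop only modifies $Q_i$ by blowing up/down along curves lying in the flopped $\proj^2$, which can only be the $(-4)$-curve $\Sigma$ itself (where $Q_i\cap P_{i,i}$ is a conic, not a line, so $\Sigma$ is not a line in any $\proj^2$) or fibers of $Q_i$ (where $Q_i\cap P_{i,j}$ is a fiber). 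Elementary transformations of a Hirzebruch surface centered at points of — or along — fibers change $F_n$ only to $F_{n\pm1}$, and crucially they cannot decrease the self-intersection of the negative section below... here the point is that one must show the self-intersection of the negative section stays $\le -1$, i.e. the surface stays a genuine Hirzebruch surface $F_n$ with $n\ge 1$ or a blow-up thereof, never $\proj^2$ (which would be "$F_{-1}$"). The clean way: the negative section $\Sigma$ has $\Sigma^2 = -4$ initially; each elementary transformation along a fiber changes $\Sigma^2$ by at most $+1$; but one also shows that whenever $\Sigma^2$ would increase, the transformation center lies \emph{off} $\Sigma$, which does not increase $\Sigma^2$ — so $\Sigma^2$ stays $\le -4 < 0$, while on $\proj^2$ every curve has positive self-intersection, a contradiction.

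Alternatively — and this is likely the cleaner route, the one the authors probably take, citing \cite[Lemma 4.2]{FuSurvey} — one uses Theorem~\ref{DuValsing}: the strict transforms $V_i$ of $Q_i$ (or rather the curve $Q_i$ traces out in the parametrizing scheme $\cV$) and the discrepancy/differential-form data distinguish $Q_i$ from a flopable $\proj^2$. Concretely, a flopable $\proj^2$ is contracted by an extremal contraction on which $-D\cdot C = ?$ and the curve-family is a $\proj^1$- or conic-bundle; by the structure results \ref{dualityMovEss} and \ref{4dimMDS} the components $P_{i,i}$ are exactly the flopable ones and $Q_i$ is never extremal on any SQM because its relevant $1$-cycle class is a positive combination of the $e_\alpha$'s (never a single $\lambda_i$-flopping class), hence $Q_i$ is never the exceptional locus of an elementary contraction and never gets flopped; and since no flop centered elsewhere can turn an $F_4$-type surface into $\proj^2$ (by the fiber-elementary-transformation analysis above), we are done.

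\textbf{Main obstacle.} The delicate point is controlling what a Mukai flop does to $Q_i$ when the flopped $\proj^2$ meets $Q_i$ in a curve that is \emph{not} obviously a fiber or the negative section — in principle the flop center could be a line of the $\proj^2$ meeting $Q_i$ in a curve of class $(\text{section}) + k(\text{fiber})$, and one must rule out, over all SQM models and all intermediate configurations (where $Q_i$'s strict transform is an iterated blow-up of $F_4$, not literally $F_4$), that iterating such moves reaches $\proj^2$. The safe argument is the invariant one: exhibit a curve class on the strict transform of $Q_i$ whose self-intersection stays negative (or, dually, show $K$ of the strict transform of $Q_i$ restricted to it stays such that it cannot be $\proj^2$), and check Mukai-flop moves only ever produce elementary modifications that preserve the sign of this invariant. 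Carrying this bookkeeping through all cases of how the flopable $\proj^2$'s ($P_{i,i}$ and, after flops, their transforms) can meet the transforms of $Q_i$ is the real content; everything else is formal.
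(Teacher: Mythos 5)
The paper does not actually prove this lemma: it declares it straightforward and refers to \cite[Lemma 4.2]{FuSurvey}, so there is no internal argument to compare yours against in detail. Your overall strategy --- exhibit a numerical feature of $Q_i\iso F_4$ that persists under all Mukai flops and is incompatible with $\proj^2$ --- is the right kind of argument and is in the spirit of the cited reference. But as written the proposal is a plan rather than a proof, and the single step carrying all the content is asserted, not established.

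Concretely, the local effect of a Mukai flop along a component $P\iso\proj^2$ on an adjacent component $Z$ is not in general an ``elementary transformation along a fiber''. Resolving the flop as blow-up of $P$ followed by blow-down to $P^*$, one sees: if $Z\cap P$ is a point, then $Z$ is blown up at that point; if $Z\cap P$ is a line of $P$ which is a $(-1)$-curve of $Z$, that curve is blown down and $Z^+$ meets $P^*$ in a point; and if $Z\cap P$ is a conic of $P$ --- which is exactly the case for $Q_i\cap P_{i,i}$ --- then $Z^+\iso Z$ and nothing happens. So in the one configuration you analyse carefully the flop does not touch $Q_i$ at all, while the genuinely dangerous moves are blow-downs of $(-1)$-curves meeting the $(-4)$-section $\Sigma$: each such blow-down raises $\Sigma^2$ by one, and five of them would make $\Sigma^2$ positive. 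Your assertion that ``whenever $\Sigma^2$ would increase, the transformation center lies off $\Sigma$'' is precisely the claim that needs proof, and it is not obviously true: the fibers of $Q_i$ (the curves $Q_i\cap P_{i,j}$, which are the natural candidates for later blow-downs) all meet $\Sigma$. What actually closes the argument is the explicit combinatorics of the configurations, visible in the flop diagrams at the end of Section \ref{examples}, where the $Q_i$ are omitted precisely because they are never modified (no flopable $\proj^2$ ever meets the strict transform of $Q_i$ in a $(-1)$-curve, so $Q_i$ is only ever left alone or blown up, and its Picard number never drops to one); alternatively one cites \cite[Lemma 4.2]{FuSurvey}. Without one of these the proof is incomplete. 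Your ``alternative cleaner route'' via Theorem \ref{DuValsing} is too vague to evaluate and still contains an unresolved placeholder.
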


\subsection{Resolutions of $\bC^4/(\bZ_3\wr\bZ_2)$}\label{A_2+A_1}
The Figure 1 presents a ``realistic'' description of configurations of
components in the special fiber of symplectic resolutions of
$\bC^4/(\bZ_3\wr\bZ_2)$.  By abuse, the strict transforms of the
components and the results of the flopping of $\bP^2$'s are denoted by
the same letters.
\begin{figure}[h]
\includegraphics[width=12cm]{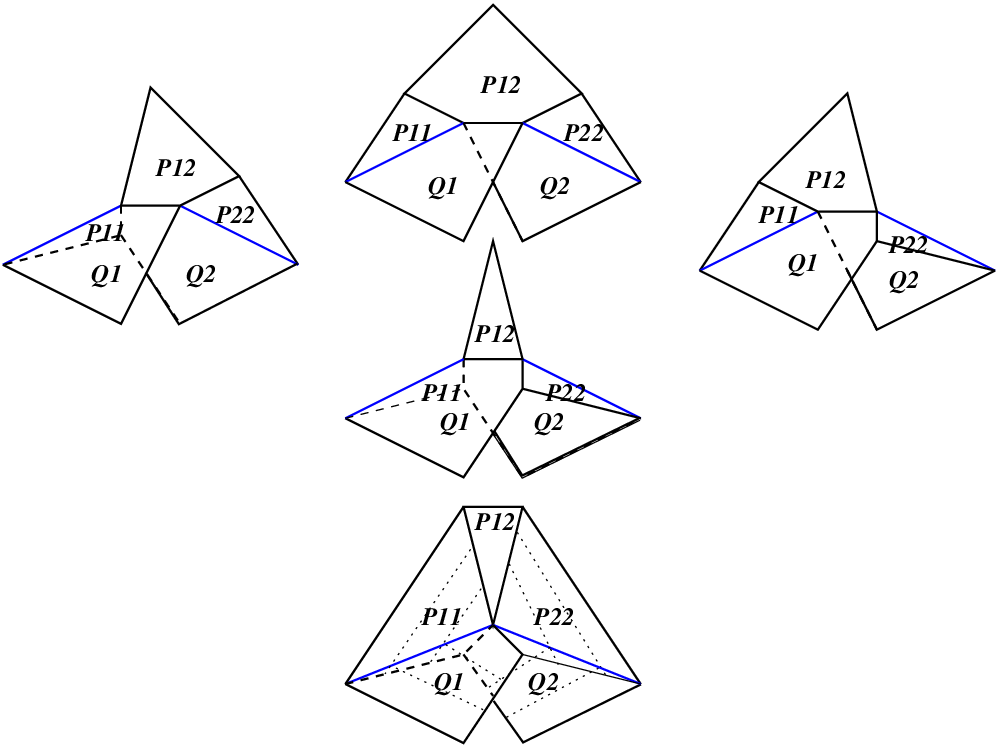}
\caption{Components of the central fiber in resolutions
  of $\bC^4/(\bZ_3\wr\bZ_2)$}
\end{figure}

The position of these configurations in Figure 1 is consistent with
the decomposition of the cone $\Mov(X/Y)$ presented in Diagram
\ref{C4/Z3xZ2-diagram}. In particular, the configuration at the top is
associated with the Hilbert-Chow resolution.  Note that the central
configuration of this diagram cointains three copies of $\bP^2$,
denoted $P_{ij}$, which contain lines whose classes are $e_0-e_1$,
$e_0-e_2$ and $e_1+e_2-e_0$.

On the other hand, the configuration in the bottom is associated with
the resolution which can be factored by two different divisorial
elementary contractions of classes $e_1$ and $e_2$. In fact,
contracting both $e_1$ and $e_2$ is a resolution of $\bA_2$
singularities which is a part of a resolution of $Y$ which comes from
presenting $\bZ_3\wr\bZ_2$ as $D_6\rtimes\bZ_3$.
That is, $X$ is then obtained by first resolving the singularities of
the action of $D_6=\sigma_3$ and then by resolving the singularities of
the $\bZ_3$ action on this resolution. 
The rulings of the respective surfaces coming from this last blow up
are indicated by dotted line segments. 
We will call such $X$ a $D_6\rtimes\bZ_3$-resolution.

This example is convenient for understanding the contents of
Theorem \ref{DuValsing} and of Proposition \ref{discrepancy}. We
refer to diagram \ref{evaluation-diagram} and let $S'$ and $S''$ be
the closure of the locus of $\bA_1$ and $\bA_2$ singularities in
$Y=\bC^4/(\bZ_3\wr\bZ_2)$. From Lemma \ref{singS} we find out that
the normalization of $S'$ as well as $S''$ has a singularity of type $\bA_2$.

By $\cV_0$ we denote the component of $Chow(X/Y)$ dominating $S'$ and
parametrizing curves equivalent to $e_0$, while by $\cV_1$ and $\cV_2$
we denote components dominating $S''$ parameterizing deformations of
$e_1$ and $e_2$. The surfaces $\cV_i$ may depend on the resolution
and, in fact, while $\cV_1$ and $\cV_2$ remain unchanged, the
component $\cV_0$ will change under flops.

\begin{lemma}\label{changeV}
  If $X$ is the Hilbert-Chow resolution then $\cV_0$ is the minimal
  resolution of $\bA_2$ singularity. If $X$ is the
  $D_6\rtimes\bZ_3$-resolution then $\cV_0$ is non-minimal, with one
  $(-1)$ curve in the central position of three exceptional curves.
\end{lemma}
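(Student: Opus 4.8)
The claim is about the parametric scheme $\cV_0$ for lines of class $e_0$ (curves over the $\bA_1$-locus $S'$) in two specific resolutions of $Y=\bC^4/(\bZ_3\wr\bZ_2)$. By the discussion around diagram \ref{evaluation-diagram} together with \ref{DuValsing}, in either resolution $\cV_0$ is a resolution of the surface $\widetilde{S'}$, which has an $\bA_2$ singularity at $\nu^{-1}(0)$ (this was noted just above, from \ref{singS}); so the content of the lemma is to determine which resolution of that $\bA_2$ surface singularity occurs, i.e. whether the exceptional chain of $\mu:\cV_0\to\widetilde{S'}$ has two $(-2)$-curves or a chain of three curves with a $(-1)$-curve in the middle. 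The strategy is therefore to compute, in each of the two models, the discrepancies $a_i$ of the components $V_i\subset\cV_0$ using \ref{discrepancy}: the $a_i$ are governed by where the evaluation map $q:\cU\to X$ drops rank, equivalently where the pushed-down form $\omega_{\cV_0}$ vanishes along $V_i$.

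\textbf{Step 1: the Hilb-Chow case.} Here I would argue directly. In the wreath-product picture of section \ref{wreath-product} (with $H=\bZ_3$, so $S\to\bC^2/\bZ_3$ has exceptional curves $C_1,C_2$ a chain of two $(-2)$-curves), curves of class $e_0$ are fibers of the $\bP^1$-bundle $E_0\to S$ coming from $\tau:Hilb^2(S)\to S^{(2)}$ being the blow-up of the diagonal. Thus $\cV_0$ is naturally identified with $S$, which is already the minimal resolution of the $\bA_2$ singularity $\bC^2/\bZ_3\simeq\widetilde{S'}$ (here the normalization map $\nu$ and the étale-in-codimension-one structure match up because $W(\bZ_3)$-action on the normalization of $S'$ is by $\bZ_3$ giving exactly $\bA_2$). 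In particular all $V_i$ have discrepancy $0$, consistent with $\omega_{\cV_0}$ being nowhere zero, i.e. $q$ has maximal rank in codimension one; so $\cV_0$ is the minimal resolution.

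\textbf{Step 2: the $D_6\rtimes\bZ_3$ case.} Here the point is that $X$ factors through a divisorial contraction contracting $E_1$ and $E_2$ (classes $e_1,e_2$), and this intermediate step is precisely resolving the $\sigma_3=D_6$-singularities first, then resolving the residual $\bZ_3$-action; so $X$ sits over the resolution of $\bC^4/\sigma_3$ discussed in \ref{sigma3a}. The essential curves $e_0$ over $S'$ now have, along the central fiber, a member which is a flopping/special curve: I would track, via the explicit configuration in Figure 1 and the intersection data of section \ref{wreath-product} (e.g. the $(-1)$-curves in the flopped $\bP^2$'s $P_{ij}$ carrying classes $e_0-e_1$, $e_0-e_2$, $e_1+e_2-e_0$), the degeneration of the family of $e_0$-curves over $0\in S'$, and show that the central fiber of $p:\cU\to\cV_0$ breaks into a chain exhibiting three components of $\mu^{-1}(\nu^{-1}(0))$, the middle one being a curve along which $\omega_{\cV_0}$ vanishes, i.e. of discrepancy $1$. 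By \ref{DuValsing} this component is exactly where $q$ drops rank ($p^{-1}(V_i)\subset\Delta_2$), and \ref{discrepancy} then gives the numerical shape: a non-minimal resolution of $\bA_2$ with a single $(-1)$-curve in the central position.

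\textbf{Main obstacle.} The serious work is Step 2: making the degeneration analysis of the $e_0$-family over the $\bA_1$-locus precise in the $D_6\rtimes\bZ_3$-model, i.e. genuinely identifying the three exceptional curves of $\mu$ and pinning down that the middle one has discrepancy exactly $1$ (not $0$, and not higher), rather than merely one $(-1)$-curve. I expect this to require either an explicit toric/local-coordinate computation in the $\sigma_3$-resolution of \ref{sigma3a} (using that the normalizer of the reflection in $\sigma_3$ is trivial, so the normalization of $S'$ is smooth there and one can compute $\cV$ and $q$ explicitly as in \ref{sigma3a}), followed by tracking how the subsequent $\bZ_3$-resolution modifies $\cV_0$, or a direct analysis of how a Mukai flop of the relevant $\bP^2$ alters the universal family $p:\cU\to\cV_0$ by a blow-up/blow-down (as discussed right after diagram \ref{evaluation-diagram}), introducing exactly one blown-up central curve. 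The bookkeeping of which $\bP^2$'s are flopped between the Hilb-Chow and $D_6\rtimes\bZ_3$ models, organized by the chamber decomposition in \ref{C4/Z3xZ2-diagram}, is what controls the count.
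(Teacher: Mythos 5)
Your Step 1 is fine and matches the paper, which simply calls the first statement immediate: in the Hilb--Chow model the $e_0$-curves are the fibers of the $\bP^1$-bundle $E_0\to S$, so $\cV_0\cong S$ is the minimal resolution of $\bA_2$. The problem is Step 2, which you yourself flag as the ``main obstacle'': you describe two possible routes but carry out neither, so the actual content of the second statement --- that the exceptional locus of $\mu:\cV_0\to\widetilde S'$ is a chain of three curves with a single $(-1)$-curve in the middle (i.e.\ the blow-up of the minimal $\bA_2$-resolution at the intersection point of its two $(-2)$-curves, a $(-3),(-1),(-3)$ chain) --- is never established. Knowing from \ref{DuValsing} that $\cV_0$ is \emph{some} resolution of the $\bA_2$ singularity does not pin down which one, and the discrepancy machinery of \ref{discrepancy} only converts knowledge of the discrepancies into knowledge of $\Delta_2$ (or vice versa); you still need an independent geometric handle on $\cV_0$ itself.

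The missing idea, which is exactly what the paper's two-line argument supplies, is the following. In the $D_6\rtimes\bZ_3$-model, $\cV_0$ maps to the quotient by $\bZ_3$ of the Chow surface of lines in the resolution of $\bC^4/\sigma_3$. That Chow surface was computed at the end of \ref{sigma3}: it is smooth and already carries one $(-1)$-curve (parametrizing the rulings of the cubic cone $S_3$). The residual $\bZ_3$-action on it is the lift of the original linear action restricted to the fixed locus of the rotations in $\sigma_3=D_6$, so it has exactly two isolated fixed points, corresponding to the two eigenvectors, and each contributes a cubic cone (i.e.\ $\frac13(1,1)$) singularity to the quotient. Resolving these two points produces the two outer $(-3)$-curves flanking the $(-1)$-curve, which is the asserted non-minimal configuration. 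Your proposed route (a) is essentially this, but without identifying the two cubic-cone fixed points and the pre-existing $(-1)$-curve from \ref{sigma3} the count of exceptional components and their self-intersections is not obtained; your route (b), tracking blow-ups of the universal family through the flops of Figure 1, could also be made to work but is likewise only sketched.
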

\begin{proof}
  The first statement is immediate. To see the second one, note that
  we have the map of $\cV_0$ to Chow of lines in the resolution of
  $\bC^4/\sigma_3$ divided by $\bZ_3$ action. The $\bZ_3$-action in
  question is just a lift up of the original linear action on the
  fixed point set of rotations in $\sigma_3=D_6$ hence $\cV_0$
  resolves 2 cubic cone singularities associated with the eigenvectors of
  the original action.
\end{proof}

One may verify that in the $D_6\rtimes\bZ_3$-resolution the
exceptional set in $\cV_0$ 
parametrizes curves consisting of three components: $\Q_2\cap P_{11}$,
$Q_1\cap P_{22}$ and a line in $P_{12}$, whose classes are,
respectively, $e_2$, $e_1$ and $e_0-(e_1+e_2)$.

\subsection{Resolutions of  $\bC^4/(\bZ_{n+1}\wr\bZ_2)$}\label{A_n+A_1}
We use the notation introduced in Theorem \ref{cone-structure},
Corollary \ref{Weyl} and Theorem \ref{hyperplane}. Suppose to be in the set up of \ref{wreath-product} 
with $H=\bZ_{n+1}$. In particular the classes
$e_i$, for $i=1,\dots, n$, are identified to simple roots associated with consecutive nodes
of the Dynkin diagram $\bA_n$.

\begin{theorem}\label{cone-division}
  Let $X\rightarrow Y=\C^4/(\Z_{n+1}\wr \Z_2)$ be a
  symplectic resolution as above.  The division of $\Mov(X/Y)=\langle
  e_0,\dots , e_n\rangle ^\vee$ into Mori chambers is defined by
  hyperplanes $\lambda_{ij}^\perp$ for $1\leq i\leq j\leq n$, where
  $\lambda_{ij}=e_0-(e_i+e_{i+1}+\cdots+e_{j-1}+e_j).$
  
 \end{theorem}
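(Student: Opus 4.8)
The plan is to identify the flopping classes $\lambda_{ij}$ as the numerical classes of curves that get contracted by the small (non-divisorial) elementary contractions occurring among the various SQM models of $X$, and then to show that these are exactly the classes $e_0-(e_i+\cdots+e_j)$. By Theorem~\ref{hyperplane} we already know that the walls of the Mori chamber subdivision of $\Mov(X/Y)$ are hyperplanes $\lambda^\perp$ for a finite set of classes $\lambda\in N_1(X/Y)$; by Theorem~\ref{4dimMDS} every such wall corresponds to a Mukai flop of a $\bP^2$, and the flopping class is the class of a line in that $\bP^2$. So the task is really a bookkeeping problem: enumerate the $\bP^2$'s (and their strict transforms) that appear in the central fibers of all SQM models, and compute the class of a line in each. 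The root-system description of Theorem~\ref{cone-structure}, with $-(e_\alpha\cdot E_\beta)$ the Cartan matrix of $\bA_n\oplus\bA_1$ (here $e_0$ the $\bA_1$-root, $e_1,\dots,e_n$ the $\bA_n$-roots), pins down $N_1(X/Y)$ and $N^1(X/Y)$ and their pairing, so that linear-algebra identities like $\lambda_{ij}\cdot E_k$ can be checked once the $\lambda_{ij}$ are known to be effective curve classes.

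First I would set up the combinatorial model of the central fiber. Starting from the Hilb-Chow resolution and the explicit list in \ref{wreath-product} of the components $P_{i,i}\iso\bP^2$, $P_{i,j}$, and $Q_i$ (here with $H=\bZ_{n+1}$, so the exceptional curves $C_1,\dots,C_n$ of the $\bA_n$-resolution form a chain, $C_i\cap C_{i+1}\ne\emptyset$), I would track what happens under the Mukai flops. Each $\bP^2$ in the fiber of a given SQM model, when flopped, replaces a line-class $\ell$ by $-\ell$ in the relevant direction and changes the other components by the strict-transform rule. The claim is that the $\bP^2$'s one ever meets, across all models, are parametrized by pairs $1\le i\le j\le n$, and that a line in the $(i,j)$-th one has class $\lambda_{ij}=e_0-(e_i+\cdots+e_j)$. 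The base case is $i=j$: the initial $\bP^2$'s $P_{i,i}$ already have line-classes that, after the blow-up $\tau$, can be computed from the intersection data recorded in \ref{wreath-product} (the conic $P_{i,i}\cap Q_i$, the lines $P_{i,i}\cap P_{i,j}$), giving $e_0-e_i$. For $i<j$ one sees, exactly as in the $n=2$ case worked out in Lemma~\ref{changeV} and the remark after it, that flopping produces a $\bP^2$ containing a curve that is a chain $Q_j\cap P_{i,i}$, $Q_i\cap P_{j,j}$, and a line in $P_{i,j}$ — no, more precisely one should track a nested family — whose total class is $e_0-(e_i+e_{i+1}+\cdots+e_j)$; this is the class $\lambda_{ij}$, using that the successive $C_k$, $i\le k\le j$, form a connected subchain.

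Next I would verify directly that for each pair $i\le j$ the class $\lambda_{ij}=e_0-(e_i+\cdots+e_j)$ does span a ray on which some SQM model of $X$ admits a flopping contraction, i.e.\ that $\lambda_{ij}^\perp$ genuinely meets the interior of $\Mov(X/Y)$ and that on the appropriate side the strict transform is a $\bP^2$-contraction. For the first point it suffices to check $\lambda_{ij}\cdot E_k\ge 0$ for all $k$ with at least one strict inequality; using the $\bA_n\oplus\bA_1$ Cartan pairing, $E_0\cdot e_0=-2$, $E_0\cdot e_k=0$ ($k\ge1$), $E_k\cdot e_l=-$(Cartan entry), one computes $\lambda_{ij}\cdot E_0=2$, $\lambda_{ij}\cdot E_{i-1}=-1$ or $0$, etc.; the relevant positivity is a short computation with the chain. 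Conversely, to see these are the \emph{only} flopping classes, I would argue that any wall of the chamber decomposition, by \ref{hyperplane} and \ref{4dimMDS}, comes from flopping a $\bP^2$ in some model, and the enumeration above is exhaustive: every $\bP^2$-component that can occur in a central fiber of an SQM model of this particular $Y$ has line-class among the $\lambda_{ij}$. This exhaustiveness is the main obstacle — it requires showing the flopping process terminates with no new $\bP^2$'s outside the list, which I would handle by an induction on the combinatorial complexity of the fiber configuration (tracking, for each model, the multiset of line-classes of its $\bP^2$-components and showing it stays inside $\{\lambda_{ij}\}$ under flops), together with the finiteness already guaranteed by $X$ being an MDS over $Y$. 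The geometry of the chain $C_1,\dots,C_n$ (versus a disconnected or branched configuration) is what forces the indices to come in consecutive blocks $i,i+1,\dots,j$, hence the precise shape of $\lambda_{ij}$.
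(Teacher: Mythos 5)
Your overall strategy --- start from the Hilbert--Chow model, identify flopping classes with line classes of $\bP^2$'s in the central fibers of the SQM models, and track these classes under Mukai flops --- is the same as the paper's. But the step you yourself flag as ``the main obstacle'', exhaustiveness, is exactly where your argument stops being an argument: ``an induction on the combinatorial complexity of the fiber configuration, tracking the multiset of line-classes and showing it stays inside $\{\lambda_{ij}\}$ under flops'' is a restatement of the claim, not a mechanism for proving it, and enumerating \emph{all} SQM models is a large unstructured task (there are far more chambers than walls). The paper supplies the missing device: it picks $D_0=\sum_i\alpha_iE_i$ in the relative interior of the face $\Mov(X/Y)\cap e_0^\perp$ with $D_0\cdot e_i=\beta_i$ chosen so that $\beta_1+\cdots+\beta_{i-1}<\beta_i$, and moves along the half-line $D_t=D_0-(t/2)E_0$. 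Two things make this work. First, since $\Mov(X/Y)=(\Mov(X/Y)\cap e_0^\perp)+\bR_{\geq0}\cdot(-E_0)$ and no flopping hyperplane meets the relative interior of the face $\Mov(X/Y)\cap e_0^\perp$ (it is a face of the Hilbert--Chow chamber), \emph{every} flopping hyperplane must cross this one half-line; so it suffices to identify the walls met along it rather than all walls of all chambers. Second, the inequalities on the $\beta_i$ make the thresholds $\gamma_{ij}=D_0\cdot(e_i+\cdots+e_j)$ pairwise distinct and totally ordered, so the half-line crosses one wall at a time, each step is a single Mukai flop preserving projectivity, and the induction has a well-defined finite shape of $\binom{n+1}{2}$ steps; the verification that each intermediate model contains only $\bP^2$'s with line classes of the form $\pm\lambda_{rs}$ is then carried out explicitly in the diagrams of the next subsection. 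Without some such device your proposal does not terminate in a proof.

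A smaller point: your criterion for $\lambda_{ij}^\perp$ to meet the interior of $\Mov(X/Y)$ (checking $\lambda_{ij}\cdot E_k\geq 0$) is not the right test. Since $\Mov(X/Y)=\langle e_0,\dots,e_n\rangle^\vee$ is dual to the simplicial cone $\Ess(X/Y)$, the hyperplane $\lambda^\perp$ meets the interior of $\Mov(X/Y)$ if and only if $\lambda$ lies in neither $\Ess(X/Y)$ nor $-\Ess(X/Y)$, i.e.\ if and only if $\lambda$ has coefficients of both signs in the basis $e_0,\dots,e_n$ --- which $\lambda_{ij}=e_0-(e_i+\cdots+e_j)$ visibly does. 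The harder half, that each $\lambda_{ij}$ is actually \emph{realized} by a flopped $\bP^2$ in some model, again comes only from the explicit walk along the half-line.
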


 A proof of this theorem will occupy the rest of this section. 

 We know one Mori chamber of $\Mov(X/Y)$, the one associated with the
 Hilbert-Chow resolution. The faces of this chamber are supported by
 $e_0^\perp$ and by $-\lambda_{ii}^\perp=(e_i-e_0)^\perp$, see
 e.g. the above discussion. Thus, in particular, if $\lambda\in
 N_1(X)$ is a flopping class then $\lambda^\perp$ does not meet the
 relative interior of the face $\Mov(X)\cap e_0^\perp$.

 On the other hand, $\Mov(X/Y)=\Mov(X/Y)\cap e_0^\perp+\bR_{\geq 0}\cdot
 (-E_0)$. Thus, if we take any $D_0$ in the relative interior of
 $\Mov(X)\cap e_0^\perp$ then, by the above observation, the half-line
 $D_0+\bR_{\geq 0}\cdot(-E_0)$ must meet the hyperplane $\lambda^\perp$,
 for any flopping class $\lambda$. Hence the theorem will be proved if,
 for a choice of $D_0$, we will show that all the hyperplanes
 $\lambda^\perp$ that $D_0+\bR_{\geq 0}\cdot(-E_0)$ meets actually
 come from the classes $\lambda_{ij}$.

 Let us choose a sequence (a vector) of $n$ positive numbers
 $\overline{\beta}=(\beta_i)$ such that $\beta_1+\cdots+\beta_{i-1}<
 \beta_i$, for $i=2,\dots,n$. We set $\gamma_{ij}= \beta_i +
 \cdots+\beta_j$. Then, by our assumption,
\begin{eq}\label{gammas}
  \gamma_{11}<\gamma_{22}<\gamma_{12}<\gamma_{33}<\gamma_{23}<
  \gamma_{13}<\gamma_{44}<\gamma_{34}\cdots 
\end{eq}

Let $A$ be the intersection matrix for the root system $\bA_n$. The
matrix $-A$ is negative definite, therefore there exists a unique
vector $\overline{\alpha}=(\alpha_i)$ such that
$(-A)\cdot\overline{\alpha} = \overline{\beta}$. If we now set
$D_0=\sum_i\alpha_iE_i$ then $D_0\cdot e_0=0$ and $D_0\cdot
e_i=\beta_i>0$ for $i=1,\dots,n$; hence $D_0$ is in the relative interior
of $\Mov(X)\cap e_0^\perp$. What is more, if we set $D_t=D_0-(t/2)E_0$
then $D_t\cdot\lambda_{ij} = t-\gamma_{ij}$; so that $\gamma_{ij}$ is
the threshold value of $t$ for the form $\lambda_{ij}$ on the
half-line $\{D_t:t\in\bR_{\geq 0}\}$. The SQM model of $X$ on which
the divisor $D_t$ is ample will be denoted by $X_t$.

Now our theorem is equivalent to saying that the models $X_t$ are in
bijection with connected components (open intervals) in
$\bR_{>0}\setminus\{\gamma_{ij}\}$. This can be verified by starting
from the Hilb-Chow resolution associated with the interval $(0,\gamma_{11})$ and
proceeding inductively as it follows.  Let $t$ be in the interval
$(\gamma_{ij}, \gamma_{i'j'})$, where $\gamma_{ij}$ and
$\gamma_{i'j'}$ are consecutive numbers in the sequence of
$\gamma$'s. We verify first that the $\bP^2$-s which are in the
exceptional locus of $X_t$ have lines whose classes are only of type
$\pm\lambda_{rs}$. Secondly, that pairs $(i,j)$ and $(i',j')$ are among
those $(r,s)$ which occur on $X_t$. The sign of $\pm\lambda_{rs}$ will
depend on the position of $\gamma_{rs}$ with respect to $t$. Hence we
flop the $\bP^2$ with lines of type $-\lambda_{i'j'}$ and proceed to
the next interval.  Note that with this single flop we keep the
(relative) projectivity of the model (over $Y$).  The argument will
stop when $X_t$ contains only one $\bP^2$, with lines in the class
$+\lambda_{1n}$.

We run this algorithm for $n=7$ in the next section.

\subsection{Explicit flops}
The aim of the following diagrams is to describe explicitly the
different SQM models obtained as resolutions of the 4-dimensional
symplectic singularity coming from the action of $\bZ_n\wr \bZ_2$, for
$n = 7$ (or less). We use the set up and notation from Theorem
\ref{cone-division} and its proof.  In particular each SQM model
corresponds to a connected component in
$\bR_{>0}\setminus\{\gamma_{ij}\}$, where $\gamma_{ij}$ are the
threshold values associated with flops in \ref{gammas}. The diagram
titled $(\gamma_{ij},\gamma_{i'j'})$ corresponds to the SQM model in
the interval bounded by these thresholds. Each diagram describes the
components of the special fiber of the resolution; these are the only
things which vary from one model to the other.  We actually omit the
components which cannot become $\bP^2$, i.e. the ones which will be
never flopped and which are isomorphic to $F_4$ (see Lemma \ref{qi}).

In each diagram a node corresponds to a component of the special
fiber, i.e.~to a surface.  The isomorphism class of the surface is
denoted by the following codes: $\bured=\bP^2$, $\bugre=F_1$,
$\bubla=\bP^1\times\bP^1$ and $\bublu$ denotes blow-up of $\bP^2$ in
two points (or $\bP^1\times\bP^1$ in one point).

The nodes of two intersecting components are connected by a line
segment: this segment can be dotted, if the incidence between the
components is a point, or solid, if the incidence is a curve. In fact,
one can easily compute the cohomological classes of all incidence
curves in our diagrams; we label only some of them for the sake of
clarity of the picture. We use the classes in cohomology of essential
curves, which, as above, we denote by $e_0, e_1,\dots,e_n$, where
$e_0$ is the class of a fiber over the surface of $\bA_1$
singularities and $e_1,\dots, e_n$ are components of the fiber over
$\bA_n$ singularity. Moreover, for $0\leq i\leq j\leq n$, by
$\lambda_{ij}$ we denote $e_0-(e_i+\cdots+e_j)$.  In the diagrams
below, for instance, $e_i$'s appear as classes of rulings of quadrics
as well as $F_1$'s, while $\pm \lambda_{ij}$'s are classes of lines in
$\bP^2$ or sections of $F_1$'s. For example, incidence curves for
$\bP^1\times\bP^1$, represented as edges of our diagrams at the vertex
denoted by $\bubla$, have the same classes at the opposite ends of the
vertex: e.g.~the class of $\begin{xy}<12pt,0pt>: (-0.7,0)*={} ;
  (0,0)*={\bubla} **@{-}\end{xy}\; $ is the same as of
$\; \begin{xy}<12pt,0pt>: (0.7,0)*={} ; (0,0)*={\bubla}
  **@{-}\end{xy}$. Also, if the class of $\begin{xy}<12pt,0pt>:
  (-0.7,0)*={} ; (0,0)*={\bugre} **@{-}\end{xy}\; $ is $\lambda_{ij}$
and the class of $\; \begin{xy}<12pt,0pt>: (0.7,0)*={} ;
  (0,0)*={\bugre} **@{-}\end{xy}$ is $\lambda_{i+1,j}$ then the class
of ruling, e.g.~the class $\; \begin{xy}<12pt,0pt>: (0,0.5)*={} ;
  (0,0)*={\bugre} **@{-}\end{xy}\; $, is equal to
$\lambda_{ij}-\lambda_{i+1,j}=e_{i+1}$.

We think that the reader has now the necessary instructions to use the
diagrams in order to take the "journey" through the different SQM
models, following the path in $\Mov(X/Y)$ given, in the previous
section, by the half-line $D_0+\bR_{\geq 0}\cdot(-E_0)$.  We can
accompany the reader in the first steps: the first diagram represents
the special fiber in the Hilb-Chow resolution, as described in
\ref{wreath-product}.  To go from this first model to the second, one
has to cross the wall in $\Mov(X/Y)$ given by the hyperplane
$\lambda_{11}^\perp$, that is one has to flop the $\bP^2$ in the lower
node, let us call it $(1,1)$. This will change the sign of the line of
this $\bP^2$ in the cohomology of $X$, from $-\lambda_{11}$ to
$\lambda_{11}$. It will also change the component in the node $(1,2)$,
from $\bublu$, the blow-up of $\bP^1\times\bP^1$ in one point, into
$\bugre$, which is $F_1$. The incidence between the components in the
nodes $(1,1)$ and $(1,2)$ will change from a curve into a point.
The step from the second to the third
model consists in crossing the wall in $\Mov(X/Y)$ given by the hyperplane
$\lambda_{22}^\perp$. This is the flop of the $\bP^2$ in the node $(2,2)$.  It will
change the components in the node $(1,2)$, $(1,3)$ and $(3,2)$ and
their incidence, as indicated in the third diagram. 

The journey will
proceed in this way and it will end in the model described by the last
diagram: here we have only one $\bP^2$, which is in the node $(1,6)$,
a bunch of $F_1$, in the nodes $(1,i)$ and $(j,6)$, $i\not= 6$ and
$j\not= 1$ , and $\bP^1\times\bP^1$ in the other nodes.

$$\begin{array}{ccc}
(0,\gamma_{11})&&(\gamma_{11},\gamma_{22})\\ \\
\begin{xy}<24pt,0pt>:
(0,5)*={\bubla}="F16", (1,5)*={\bubla}="F26", (2,5)*={\bubla}="F36", (3,5)*={\bubla}="F46", 
                                                                     (4,5)*={\bublu}="F56", (5,5)*={\bured}="F66", 
(0,4)*={\bubla}="F15", (1,4)*={\bubla}="F25", (2,4)*={\bubla}="F35", (3,4)*={\bublu}="F45", (4,4)*={\bured}="F55", 
(0,3)*={\bubla}="F14", (1,3)*={\bubla}="F24", (2,3)*={\bublu}="F34", (3,3)*={\bured}="F44",  
(0,2)*={\bubla}="F13", (1,2)*={\bublu}="F23", (2,2)*={\bured}="F33", 
(0,1)*={\bublu}="F12", (1,1)*={\bured}="F22",
(0,0)*={\bured}="F11",
"F11";(0,5.5) **@{-}, "F22";(1,5.5) **@{-}, "F33";(2,5.5) **@{-}, "F44";(3,5.5) **@{-}, "F55";(4,5.5) **@{-},
"F22";(-0.5,1) **@{-}, "F33";(-0.5,2) **@{-}, "F44";(-0.5,3) **@{-}, "F55";(-0.5,4) **@{-}, "F66";(-0.5,5) **@{-},
"F12";(0.5,0.5) **@{-}, "F23";(1.5,1.5) **@{-}, "F34";(2.5,2.5) **@{-}, "F45";(3.5,3.5) **@{-}, "F56";(4.5,4.5) **@{-},
"F22";"F13" **@{.}, "F23";"F14" **@{.}, "F33";"F15" **@{.}, "F34";"F16" **@{.}, "F44";"F26" **@{.}, "F45";"F36" **@{.},
"F55";"F46" **@{.}, "F12";"F56" **@{.}, "F13";"F46" **@{.}, "F14";"F36" **@{.}, "F15";"F26" **@{.},
(0.3,-0.2)*={-\lambda_{11}}, (0.7,0.5)*={e_0}, (1.3,0.8)*={-\lambda_{22}}, (1.7,1.5)*={e_0}, 
(2.3,1.8)*={-\lambda_{33}}, (2.7,2.5)*={e_0}, (3.3,2.8)*={-\lambda_{44}}, (3.7,3.5)*={e_0}, 
(4.3,3.8)*={-\lambda_{55}}, (4.7,4.5)*={e_0}, (5.3,4.8)*={-\lambda_{66}},
(-0.5,1)*={e_2}, (-0.5,2)*={e_3}, (-0.5,3)*={e_4}, (-0.5,4)*={e_5}, (-0.5,5)*={e_6},
(0,5.5)*={e_1}, (1,5.5)*={e_2}, (2,5.5)*={e_3}, (3,5.5)*={e_4}, (4,5.5)*={e_5}
\end{xy}&\phantom{xx}&
\begin{xy}<24pt,0pt>:
(0,5)*={\bubla}="F16", (1,5)*={\bubla}="F26", (2,5)*={\bubla}="F36", (3,5)*={\bubla}="F46", 
                                                                     (4,5)*={\bublu}="F56", (5,5)*={\bured}="F66", 
(0,4)*={\bubla}="F15", (1,4)*={\bubla}="F25", (2,4)*={\bubla}="F35", (3,4)*={\bublu}="F45", (4,4)*={\bured}="F55", 
(0,3)*={\bubla}="F14", (1,3)*={\bubla}="F24", (2,3)*={\bublu}="F34", (3,3)*={\bured}="F44",  
(0,2)*={\bubla}="F13", (1,2)*={\bublu}="F23", (2,2)*={\bured}="F33", 
(0,1)*={\bugre}="F12", (1,1)*={\bured}="F22",
(0,0)*={\bured}="F11",
"F12";(0,5.5) **@{-}, "F22";(1,5.5) **@{-}, "F33";(2,5.5) **@{-}, "F44";(3,5.5) **@{-}, "F55";(4,5.5) **@{-},
"F22";(-0.5,1) **@{-}, "F33";(-0.5,2) **@{-}, "F44";(-0.5,3) **@{-}, "F55";(-0.5,4) **@{-}, "F66";(-0.5,5) **@{-},
"F12";(0.5,0.5) **@{-}, "F23";(1.5,1.5) **@{-}, "F34";(2.5,2.5) **@{-}, "F45";(3.5,3.5) **@{-}, "F56";(4.5,4.5) **@{-},
"F22";"F13" **@{.}, "F23";"F14" **@{.}, "F33";"F15" **@{.}, "F34";"F16" **@{.}, "F44";"F26" **@{.}, "F45";"F36" **@{.},
"F55";"F46" **@{.}, "F12";"F56" **@{.}, "F13";"F46" **@{.}, "F14";"F36" **@{.}, "F15";"F26" **@{.},
"F11";"F12" **@{.},
(0.3,-0.2)*={\lambda_{11}}, (0.7,0.5)*={e_1}, (1.3,0.8)*={-\lambda_{22}}, (1.7,1.5)*={e_0}, 
(2.3,1.8)*={-\lambda_{33}}, (2.7,2.5)*={e_0}, (3.3,2.8)*={-\lambda_{44}}, (3.7,3.5)*={e_0}, 
(4.3,3.8)*={-\lambda_{55}}, (4.7,4.5)*={e_0}, (5.3,4.8)*={-\lambda_{66}},
(-0.5,1)*={-\lambda_{12}}, (-0.5,2)*={e_3}, (-0.5,3)*={e_4}, (-0.5,4)*={e_5}, (-0.5,5)*={e_6},
(0,5.5)*={e_1}, (1,5.5)*={e_2}, (2,5.5)*={e_3}, (3,5.5)*={e_4}, (4,5.5)*={e_5}
\end{xy}
\end{array}$$
$$\begin{array}{ccc}
(\gamma_{22},\gamma_{12})&\phantom{xx}&(\gamma_{12},\gamma_{33})\\ \\
\begin{xy}<24pt,0pt>:
(0,5)*={\bubla}="F16", (1,5)*={\bubla}="F26", (2,5)*={\bubla}="F36", (3,5)*={\bubla}="F46", 
                                                                     (4,5)*={\bublu}="F56", (5,5)*={\bured}="F66", 
(0,4)*={\bubla}="F15", (1,4)*={\bubla}="F25", (2,4)*={\bubla}="F35", (3,4)*={\bublu}="F45", (4,4)*={\bured}="F55", 
(0,3)*={\bubla}="F14", (1,3)*={\bubla}="F24", (2,3)*={\bublu}="F34", (3,3)*={\bured}="F44",  
(0,2)*={\bublu}="F13", (1,2)*={\bugre}="F23", (2,2)*={\bured}="F33", 
(0,1)*={\bured}="F12", (1,1)*={\bured}="F22",
(0,0)*={\bured}="F11",
"F12";(0,5.5) **@{-}, "F33";(2,5.5) **@{-}, "F44";(3,5.5) **@{-}, "F55";(4,5.5) **@{-},
"F23";(1,5.5) **@{-}, "F33";(-0.5,2) **@{-}, "F44";(-0.5,3) **@{-}, "F55";(-0.5,4) **@{-}, "F66";(-0.5,5) **@{-},
"F23";(1.5,1.5) **@{-}, "F34";(2.5,2.5) **@{-}, "F45";(3.5,3.5) **@{-}, "F56";(4.5,4.5) **@{-},
  "F22";"F13" **@{-}, 
"F22";"F13" **@{.}, "F23";"F14" **@{.}, "F33";"F15" **@{.}, "F34";"F16" **@{.}, "F44";"F26" **@{.}, "F45";"F36" **@{.},
"F55";"F46" **@{.}, "F23";"F56" **@{.}, "F13";"F46" **@{.}, "F14";"F36" **@{.}, "F15";"F26" **@{.},
  "F11";"F12" **@{.}, "F12";"F22" **@{.}, "F22";"F23" **@{.},
(0.3,-0.2)*={\lambda_{11}}, (1.3,0.8)*={\lambda_{22}}, (1.7,1.5)*={e_2}, 
(2.3,1.8)*={-\lambda_{33}}, (2.7,2.5)*={e_0}, (3.3,2.8)*={-\lambda_{44}}, (3.7,3.5)*={e_0}, 
(4.3,3.8)*={-\lambda_{55}}, (4.7,4.5)*={e_0}, (5.3,4.8)*={-\lambda_{66}},
(-0.5,1)*={-\lambda_{12}}, (-0.5,2)*={e_3}, (-0.5,3)*={e_4}, (-0.5,4)*={e_5}, (-0.5,5)*={e_6},
(0,5.5)*={e_1}, (1,5.5)*={e_2}, (2,5.5)*={e_3}, (3,5.5)*={e_4}, (4,5.5)*={e_5},
(0.4,2.2)*={-\lambda_{23}}
\end{xy}&&
\begin{xy}<24pt,0pt>:
(0,5)*={\bubla}="F16", (1,5)*={\bubla}="F26", (2,5)*={\bubla}="F36", (3,5)*={\bubla}="F46", 
                                                                     (4,5)*={\bublu}="F56", (5,5)*={\bured}="F66", 
(0,4)*={\bubla}="F15", (1,4)*={\bubla}="F25", (2,4)*={\bubla}="F35", (3,4)*={\bublu}="F45", (4,4)*={\bured}="F55", 
(0,3)*={\bubla}="F14", (1,3)*={\bubla}="F24", (2,3)*={\bublu}="F34", (3,3)*={\bured}="F44",  
(0,2)*={\bugre}="F13", (1,2)*={\bugre}="F23", (2,2)*={\bured}="F33", 
(0,1)*={\bured}="F12", (1,1)*={\bugre}="F22",
(0,0)*={\bugre}="F11",
"F11";(0.5,0) **@{-}, (0.7,0)*={e_2}, "F22";(1,0.6) **@{-}, (1.2,0.6)*={e_1},
"F13";(0,5.5) **@{-}, "F33";(2,5.5) **@{-}, "F44";(3,5.5) **@{-}, "F55";(4,5.5) **@{-},
"F23";(1,5.5) **@{-}, "F33";(-0.5,2) **@{-}, "F44";(-0.5,3) **@{-}, "F55";(-0.5,4) **@{-}, "F66";(-0.5,5) **@{-},
"F23";(1.5,1.5) **@{-}, "F34";(2.5,2.5) **@{-}, "F45";(3.5,3.5) **@{-}, "F56";(4.5,4.5) **@{-},
  "F13";"F22" **@{-}, "F12";(0,-0.5) **@{-}, "F12";(1.5,1) **@{-},
"F22";"F13" **@{.}, "F23";"F14" **@{.}, "F33";"F15" **@{.}, "F34";"F16" **@{.}, "F44";"F26" **@{.}, "F45";"F36" **@{.},
"F55";"F46" **@{.}, "F23";"F56" **@{.}, "F13";"F46" **@{.}, "F14";"F36" **@{.}, "F15";"F26" **@{.},
   "F22";"F23" **@{.},  "F12";"F13" **@{.},  "F11";"F22" **@{.}, 
(-0.2,-0.3)*={\lambda_{11}}, (1.6,1)*={\lambda_{22}}, (1.7,1.5)*={e_2}, 
(2.3,1.8)*={-\lambda_{33}}, (2.7,2.5)*={e_0}, (3.3,2.8)*={-\lambda_{44}}, (3.7,3.5)*={e_0}, 
(4.3,3.8)*={-\lambda_{55}}, (4.7,4.5)*={e_0}, (5.3,4.8)*={-\lambda_{66}},
(-0.2,0.8)*={\lambda_{12}}, (-0.5,2)*={-\lambda_{13}}, (-0.5,3)*={e_4}, (-0.5,4)*={e_5}, (-0.5,5)*={e_6},
(0,5.5)*={e_1}, (1,5.5)*={e_2}, (2,5.5)*={e_3}, (3,5.5)*={e_4}, (4,5.5)*={e_5},
   (0.4,2.2)*={-\lambda_{23}}, (0.6,1.6)*={e_1}
\end{xy}
\end{array}$$

$$\begin{array}{ccc}
(\gamma_{33},\gamma_{23})&\phantom{xx}&(\gamma_{23},\gamma_{13})\\ \\
\begin{xy}<24pt,0pt>:
(0,5)*={\bubla}="F16", (1,5)*={\bubla}="F26", (2,5)*={\bubla}="F36", (3,5)*={\bubla}="F46", 
                                                                     (4,5)*={\bublu}="F56", (5,5)*={\bured}="F66", 
(0,4)*={\bubla}="F15", (1,4)*={\bubla}="F25", (2,4)*={\bubla}="F35", (3,4)*={\bublu}="F45", (4,4)*={\bured}="F55", 
(0,3)*={\bubla}="F14", (1,3)*={\bublu}="F24", (2,3)*={\bugre}="F34", (3,3)*={\bured}="F44",  
(0,2)*={\bugre}="F13", (1,2)*={\bured}="F23", (2,2)*={\bured}="F33", 
(0,1)*={\bured}="F12", (1,1)*={\bugre}="F22",
(0,0)*={\bugre}="F11",
"F11";(0.5,0) **@{-}, (0.7,0)*={e_2}, "F22";(1,0.6) **@{-}, (1.2,0.6)*={e_1},
"F13";(0,5.5) **@{-}, "F34";(2,5.5) **@{-}, "F44";(3,5.5) **@{-}, "F55";(4,5.5) **@{-},
"F23";(1,5.5) **@{-}, "F23";(-0.5,2) **@{-}, "F44";(-0.5,3) **@{-}, "F55";(-0.5,4) **@{-}, "F66";(-0.5,5) **@{-},
"F34";(2.5,2.5) **@{-}, "F45";(3.5,3.5) **@{-}, "F56";(4.5,4.5) **@{-},
  "F13";"F22" **@{-}, "F12";(0,-0.5) **@{-}, "F12";(1.5,1) **@{-}, "F33";"F24" **@{-},
"F22";"F13" **@{.}, "F23";"F14" **@{.}, "F24";"F15" **@{.}, "F34";"F16" **@{.}, "F44";"F26" **@{.}, "F45";"F36" **@{.},
"F55";"F46" **@{.}, "F34";"F56" **@{.}, "F13";"F46" **@{.}, "F14";"F36" **@{.}, "F15";"F26" **@{.},
   "F22";"F23" **@{.},  "F12";"F13" **@{.},  "F11";"F22" **@{.}, "F23";"F33" **@{.}, "F34";"F33" **@{.},
(-0.2,-0.3)*={\lambda_{11}}, (1.6,1)*={\lambda_{22}}, (1.2,1.8)*={-\lambda_{23}}, 
(2.3,1.8)*={\lambda_{33}}, (2.7,2.5)*={e_3}, 
(3.3,2.8)*={-\lambda_{44}}, (3.7,3.5)*={e_0}, 
(4.3,3.8)*={-\lambda_{55}}, (4.7,4.5)*={e_0}, (5.3,4.8)*={-\lambda_{66}},
(-0.2,0.8)*={\lambda_{12}}, (-0.4,1.8)*={-\lambda_{13}}, (-0.5,3)*={e_4}, (-0.5,4)*={e_5}, (-0.5,5)*={e_6},
(0,5.5)*={e_1}, (1,5.5)*={e_2}, (2,5.5)*={e_3}, (3,5.5)*={e_4}, (4,5.5)*={e_5},
   (0.6,1.6)*={e_1}, (1.5,2.9)*={-\lambda_{34}},
\end{xy}&&
\begin{xy}<24pt,0pt>:
(0,5)*={\bubla}="F16", (1,5)*={\bubla}="F26", (2,5)*={\bubla}="F36", (3,5)*={\bubla}="F46", 
                                                                     (4,5)*={\bublu}="F56", (5,5)*={\bured}="F66", 
(0,4)*={\bubla}="F15", (1,4)*={\bubla}="F25", (2,4)*={\bubla}="F35", (3,4)*={\bublu}="F45", (4,4)*={\bured}="F55", 
(0,3)*={\bublu}="F14", (1,3)*={\bugre}="F24", (2,3)*={\bugre}="F34", (3,3)*={\bured}="F44",  
(0,2)*={\bured}="F13", (1,2)*={\bured}="F23", (2,2)*={\bugre}="F33", 
(0,1)*={\bured}="F12", (1,1)*={\bublu}="F22",
(0,0)*={\bugre}="F11",
"F11";(0.5,0) **@{-}, (0.7,0)*={e_2}, "F23";(1,0.6) **@{-}, (1.2,0.6)*={e_1}, "F33";(2,1.6) **@{-}, (2.2,1.6)*={e_2}, 
"F13";(0,5.5) **@{-}, "F34";(2,5.5) **@{-}, "F44";(3,5.5) **@{-}, "F55";(4,5.5) **@{-},
"F24";(1,5.5) **@{-}, "F44";(-0.5,3) **@{-}, "F55";(-0.5,4) **@{-}, "F66";(-0.5,5) **@{-},
"F34";(2.5,2.5) **@{-}, "F45";(3.5,3.5) **@{-}, "F56";(4.5,4.5) **@{-},
"F13";"F22" **@{-}, "F12";(0,-0.5) **@{-}, "F12";(1.5,1) **@{-}, "F33";"F24" **@{-},
"F23";(2.4,2) **@{-}, "F23";"F14" **@{-},
"F22";"F13" **@{.}, "F24";"F15" **@{.}, "F34";"F16" **@{.}, "F44";"F26" **@{.}, "F45";"F36" **@{.},
"F55";"F46" **@{.}, "F34";"F56" **@{.}, "F24";"F46" **@{.}, "F14";"F36" **@{.}, "F15";"F26" **@{.},
    "F12";"F13" **@{.},  "F11";"F33" **@{.},  "F34";"F33" **@{.}, "F13";"F23" **@{.}, "F23";"F24" **@{.},
(-0.2,-0.3)*={\lambda_{11}}, (1.6,1)*={e_3}, (1.1,1.8)*={\lambda_{23}}, 
(1.7,2.5)*={e_2}, (2.7,2.5)*={e_3}, (3.3,2.8)*={-\lambda_{44}}, (3.7,3.5)*={e_0}, 
(4.3,3.8)*={-\lambda_{55}}, (4.7,4.5)*={e_0}, (5.3,4.8)*={-\lambda_{66}}, (2.5,2)*={\lambda_{33}},
(-0.2,0.8)*={\lambda_{12}}, (-0.4,1.8)*={-\lambda_{13}}, (-0.5,3)*={e_4}, (-0.5,4)*={e_5}, (-0.5,5)*={e_6},
(0,5.5)*={e_1}, (1,5.5)*={e_2}, (2,5.5)*={e_3}, (3,5.5)*={e_4}, (4,5.5)*={e_5}, 
(1.5,2.9)*={-\lambda_{34}}, (0.5,2.9)*={-\lambda_{24}}, 
\end{xy}
\end{array}$$
$$\begin{array}{ccc}
(\gamma_{13},\gamma_{44})&\phantom{xx}&(\gamma_{44},\gamma_{34})\\ \\
\begin{xy}<24pt,0pt>:
(0,5)*={\bubla}="F16", (1,5)*={\bubla}="F26", (2,5)*={\bubla}="F36", (3,5)*={\bubla}="F46", 
                                                                     (4,5)*={\bublu}="F56", (5,5)*={\bured}="F66", 
(0,4)*={\bubla}="F15", (1,4)*={\bubla}="F25", (2,4)*={\bubla}="F35", (3,4)*={\bublu}="F45", (4,4)*={\bured}="F55", 
(0,3)*={\bugre}="F14", (1,3)*={\bugre}="F24", (2,3)*={\bugre}="F34", (3,3)*={\bured}="F44",  
(0,2)*={\bured}="F13", (1,2)*={\bugre}="F23", (2,2)*={\bugre}="F33", 
(0,1)*={\bugre}="F12", (1,1)*={\bubla}="F22",
(0,0)*={\bugre}="F11",
"F11";(0.5,0) **@{-}, (0.7,0)*={e_2}, "F23";(1,0.6) **@{-}, (1.2,0.6)*={e_1}, "F33";(2,1.6) **@{-}, (2.2,1.6)*={e_2}, 
"F14";(0,5.5) **@{-}, "F34";(2,5.5) **@{-}, "F44";(3,5.5) **@{-}, "F55";(4,5.5) **@{-},
"F24";(1,5.5) **@{-}, "F44";(-0.5,3) **@{-}, "F55";(-0.5,4) **@{-}, "F66";(-0.5,5) **@{-},
"F34";(2.5,2.5) **@{-}, "F45";(3.5,3.5) **@{-}, "F56";(4.5,4.5) **@{-},
 "F12";(0,-0.5) **@{-}, "F12";(1.5,1) **@{-}, "F33";"F24" **@{-},
  "F23";(2.4,2) **@{-},  "F23";"F14" **@{-},  "F13";"F23" **@{-}, "F12";"F13" **@{-}, 
"F22";"F13" **@{.}, "F24";"F15" **@{.}, "F34";"F16" **@{.}, "F44";"F26" **@{.}, "F45";"F36" **@{.},
"F55";"F46" **@{.}, "F34";"F56" **@{.}, "F24";"F46" **@{.}, "F14";"F36" **@{.}, "F15";"F26" **@{.},
"F11";"F33" **@{.},  "F34";"F33" **@{.}, "F23";"F24" **@{.},  "F13";"F22" **@{.}, "F13";"F14" **@{.},
"F12";"F23" **@{.},
(-0.2,-0.3)*={\lambda_{11}}, (1.6,1)*={e_3}, (1.6,1.9)*={\lambda_{23}}, 
(1.7,2.5)*={e_2}, (2.7,2.5)*={e_3}, (3.3,2.8)*={-\lambda_{44}}, (3.7,3.5)*={e_0}, 
(4.3,3.8)*={-\lambda_{55}}, (4.7,4.5)*={e_0}, (5.3,4.8)*={-\lambda_{66}}, (2.5,2)*={\lambda_{33}},
(-0.2,0.6)*={\lambda_{12}}, (-0.3,1.8)*={\lambda_{13}}, (-0.4,2.8)*={-\lambda_{14}}, 
(-0.5,4)*={e_5}, (-0.5,5)*={e_6},
(0,5.5)*={e_1}, (1,5.5)*={e_2}, (2,5.5)*={e_3}, (3,5.5)*={e_4}, (4,5.5)*={e_5}, (1.5,2.9)*={-\lambda_{34}},
(0.5,2.9)*={-\lambda_{24}}, (1.2,1.5)*={e_1},
\end{xy}&\phantom{xx}&
\begin{xy}<24pt,0pt>:
(0,5)*={\bubla}="F16", (1,5)*={\bubla}="F26", (2,5)*={\bubla}="F36", (3,5)*={\bubla}="F46", 
                                                                     (4,5)*={\bublu}="F56", (5,5)*={\bured}="F66", 
(0,4)*={\bubla}="F15", (1,4)*={\bubla}="F25", (2,4)*={\bublu}="F35", (3,4)*={\bugre}="F45", (4,4)*={\bured}="F55", 
(0,3)*={\bugre}="F14", (1,3)*={\bugre}="F24", (2,3)*={\bured}="F34", (3,3)*={\bured}="F44",  
(0,2)*={\bured}="F13", (1,2)*={\bugre}="F23", (2,2)*={\bugre}="F33", 
(0,1)*={\bugre}="F12", (1,1)*={\bubla}="F22",
(0,0)*={\bugre}="F11",
"F11";(0.5,0) **@{-}, (0.7,0)*={e_2},"F23";(1,0.6) **@{-}, (1.2,0.6)*={e_1}, "F33";(2,1.6) **@{-}, (2.2,1.6)*={e_2}, 
"F14";(0,5.5) **@{-}, "F34";(2,5.5) **@{-}, "F45";(3,5.5) **@{-}, "F55";(4,5.5) **@{-},
"F24";(1,5.5) **@{-}, "F34";(-0.5,3) **@{-}, "F55";(-0.5,4) **@{-}, "F66";(-0.5,5) **@{-},
"F45";(3.5,3.5) **@{-}, "F56";(4.5,4.5) **@{-},
 "F12";(0,-0.5) **@{-}, "F12";(1.5,1) **@{-}, "F33";"F24" **@{-},
 "F23";(2.4,2) **@{-},  "F23";"F14" **@{-},  "F13";"F23" **@{-}, "F12";"F13" **@{-}, 
 "F44";"F35" **@{-}, 
"F22";"F13" **@{.}, "F24";"F15" **@{.}, "F34";"F16" **@{.}, "F35";"F26" **@{.}, "F45";"F36" **@{.},
"F55";"F46" **@{.}, "F45";"F56" **@{.}, "F24";"F46" **@{.}, "F14";"F36" **@{.}, "F15";"F26" **@{.},
"F11";"F33" **@{.}, "F34";"F33" **@{.}, "F23";"F24" **@{.},  "F13";"F22" **@{.}, "F13";"F14" **@{.},
"F34";"F44" **@{.}, "F44";"F45" **@{.},
"F12";"F23" **@{.},
(-0.2,-0.3)*={\lambda_{11}}, (1.6,1)*={e_3}, (1.6,1.9)*={\lambda_{23}}, 
(1.7,2.5)*={e_2}, (3.3,2.8)*={\lambda_{44}}, (3.7,3.5)*={e_4}, 
(4.3,3.8)*={-\lambda_{55}}, (4.7,4.5)*={e_0}, (5.3,4.8)*={-\lambda_{66}}, (2.5,2)*={\lambda_{33}},
(-0.2,0.6)*={\lambda_{12}}, (-0.3,1.8)*={\lambda_{13}}, (-0.5,4)*={e_5}, (-0.5,5)*={e_6},
(0,5.5)*={e_1}, (1,5.5)*={e_2}, (2,5.5)*={e_3}, (3,5.5)*={e_4}, (4,5.5)*={e_5}, (1.8,2.9)*={-\lambda_{34}},
(0.5,2.9)*={-\lambda_{24}}, (1.2,1.5)*={e_1}, (2.5,3.9)*={-\lambda_{45}},  (-0.4,2.8)*={-\lambda_{14}}, 
\end{xy}\end{array}
$$

$$\begin{array}{ccc}
(\gamma_{34},\gamma_{24})&\phantom{xx}&(\gamma_{24},\gamma_{14})\\ \\
\begin{xy}<24pt,0pt>:
(0,5)*={\bubla}="F16", (1,5)*={\bubla}="F26", (2,5)*={\bubla}="F36", (3,5)*={\bubla}="F46", 
                                                                     (4,5)*={\bublu}="F56", (5,5)*={\bured}="F66", 
(0,4)*={\bubla}="F15", (1,4)*={\bublu}="F25", (2,4)*={\bugre}="F35", (3,4)*={\bugre}="F45", (4,4)*={\bured}="F55", 
(0,3)*={\bugre}="F14", (1,3)*={\bured}="F24", (2,3)*={\bured}="F34", (3,3)*={\bugre}="F44",  
(0,2)*={\bured}="F13", (1,2)*={\bugre}="F23", (2,2)*={\bublu}="F33", 
(0,1)*={\bugre}="F12", (1,1)*={\bubla}="F22",
(0,0)*={\bugre}="F11",
"F11";(0.5,0) **@{-}, (0.7,0)*={e_2},
"F23";(1,0.6) **@{-}, (1.2,0.6)*={e_1}, "F34";(2,1.6) **@{-}, (2.2,1.6)*={e_2}, "F44";(3,2.6) **@{-}, (3.2,2.6)*={e_3}, 
"F14";(0,5.5) **@{-}, "F35";(2,5.5) **@{-}, "F45";(3,5.5) **@{-}, "F55";(4,5.5) **@{-},
"F24";(1,5.5) **@{-}, "F24";(-0.5,3) **@{-}, "F55";(-0.5,4) **@{-}, "F66";(-0.5,5) **@{-},
"F45";(3.5,3.5) **@{-}, "F56";(4.5,4.5) **@{-},
"F12";(0,-0.5) **@{-}, "F12";(1.5,1) **@{-}, "F33";"F24" **@{-},
 "F23";(2.5,2) **@{-}, "F23";"F14" **@{-},  "F13";"F23" **@{-}, "F12";"F13" **@{-}, 
 "F44";"F35" **@{-}, "F25";"F34" **@{-}, "F34";(3.4,3) **@{-},
"F22";"F13" **@{.}, "F24";"F15" **@{.}, "F25";"F16" **@{.}, "F35";"F26" **@{.}, "F45";"F36" **@{.},
"F55";"F46" **@{.}, "F45";"F56" **@{.}, "F35";"F46" **@{.}, "F14";"F36" **@{.}, "F15";"F26" **@{.},
"F11";"F44" **@{.}, "F23";"F24" **@{.}, "F13";"F22" **@{.}, "F13";"F14" **@{.}, 
"F44";"F45" **@{.}, "F24";"F34" **@{.}, "F34";"F35" **@{.},
"F12";"F23" **@{.},
(-0.2,-0.3)*={\lambda_{11}}, (1.6,1)*={e_3}, (2.6,2)*={e_4}, (1.6,1.9)*={\lambda_{23}}, 
(3.5,3)*={\lambda_{44}}, 
(3.7,3.5)*={e_4}, 
(4.3,3.8)*={-\lambda_{55}}, (4.7,4.5)*={e_0}, (5.3,4.8)*={-\lambda_{66}},
(-0.2,0.6)*={\lambda_{12}}, (-0.3,1.8)*={\lambda_{13}}, (-0.4,2.9)*={-\lambda_{14}}, (-0.5,4)*={e_5}, (-0.5,5)*={e_6},
(0,5.5)*={e_1}, (1,5.5)*={e_2}, (2,5.5)*={e_3}, (3,5.5)*={e_4}, (4,5.5)*={e_5}, (2.3,2.9)*={\lambda_{34}},
(1.3,2.8)*={-\lambda_{24}}, (1.2,1.5)*={e_1}, 
(2.5,3.9)*={-\lambda_{45}},(1.5,3.9)*={-\lambda_{35}}, 

\end{xy}&&
\begin{xy}<24pt,0pt>:
(0,5)*={\bubla}="F16", (1,5)*={\bubla}="F26", (2,5)*={\bubla}="F36", (3,5)*={\bubla}="F46", 
                                                                     (4,5)*={\bublu}="F56", (5,5)*={\bured}="F66", 
(0,4)*={\bublu}="F15", (1,4)*={\bugre}="F25", (2,4)*={\bugre}="F35", (3,4)*={\bugre}="F45", (4,4)*={\bured}="F55", 
(0,3)*={\bured}="F14", (1,3)*={\bured}="F24", (2,3)*={\bugre}="F34", (3,3)*={\bugre}="F44",  
(0,2)*={\bured}="F13", (1,2)*={\bublu}="F23", (2,2)*={\bubla}="F33", 
(0,1)*={\bugre}="F12", (1,1)*={\bubla}="F22",
(0,0)*={\bugre}="F11",
"F11";(0.5,0) **@{-}, (0.7,0)*={e_2},
"F24";(1,0.6) **@{-}, (1.2,0.6)*={e_1}, "F34";(2,1.6) **@{-}, (2.2,1.6)*={e_2}, "F44";(3,2.6) **@{-}, (3.2,2.6)*={e_3}, 
"F14";(0,5.5) **@{-}, "F35";(2,5.5) **@{-}, "F45";(3,5.5) **@{-}, "F55";(4,5.5) **@{-},
"F25";(1,5.5) **@{-},  "F55";(-0.5,4) **@{-}, "F66";(-0.5,5) **@{-},
"F45";(3.5,3.5) **@{-}, "F56";(4.5,4.5) **@{-},
"F12";(0,-0.5) **@{-}, "F12";(1.5,1) **@{-}, 
"F23";(2.5,2) **@{-}, "F23";"F14" **@{-},  "F13";"F23" **@{-}, "F12";"F13" **@{-}, 
"F44";"F35" **@{-}, "F25";"F34" **@{-}, "F34";(3.4,3) **@{-},  
"F24";"F15" **@{-},  "F24";"F34" **@{-}, 
"F22";"F13" **@{.}, "F25";"F16" **@{.}, "F35";"F26" **@{.}, "F45";"F36" **@{.},
"F55";"F46" **@{.}, "F45";"F56" **@{.}, "F35";"F46" **@{.}, "F25";"F36" **@{.}, "F15";"F26" **@{.},
"F11";"F44" **@{.}, "F13";"F22" **@{.}, "F13";"F14" **@{.}, 
"F44";"F45" **@{.}, "F34";"F35" **@{.}, "F33";"F24" **@{.},"F24";"F14" **@{.}, "F24";"F25" **@{.},
"F12";"F34" **@{.},
(-0.2,-0.3)*={\lambda_{11}}, (1.6,1)*={e_3}, (2.6,2)*={e_4},
(3.5,3)*={\lambda_{44}}, 
(3.7,3.5)*={e_4}, 
(4.3,3.8)*={-\lambda_{55}}, (4.7,4.5)*={e_0}, (5.3,4.8)*={-\lambda_{66}},
(-0.2,0.6)*={\lambda_{12}}, (-0.3,1.8)*={\lambda_{13}}, (-0.4,2.9)*={-\lambda_{14}}, (-0.5,4)*={e_5}, (-0.5,5)*={e_6},
(0,5.5)*={e_1}, (1,5.5)*={e_2}, (2,5.5)*={e_3}, (3,5.5)*={e_4}, (4,5.5)*={e_5}, (2.5,2.9)*={\lambda_{34}},
(1.3,2.8)*={\lambda_{24}}, (1.2,1.5)*={e_1},  (2.2,2.5)*={e_2}, 
(2.5,3.9)*={-\lambda_{45}},(1.5,3.9)*={-\lambda_{35}},(0.5,3.9)*={-\lambda_{25}},
\end{xy}\end{array}$$

$$\begin{array}{ccc}
(\gamma_{14},\gamma_{55})&\phantom{xx}&(\gamma_{55},\gamma_{45})\\ \\
\begin{xy}<24pt,0pt>:
(0,5)*={\bubla}="F16", (1,5)*={\bubla}="F26", (2,5)*={\bubla}="F36", (3,5)*={\bubla}="F46", 
                                                                     (4,5)*={\bublu}="F56", (5,5)*={\bured}="F66", 
(0,4)*={\bugre}="F15", (1,4)*={\bugre}="F25", (2,4)*={\bugre}="F35", (3,4)*={\bugre}="F45", (4,4)*={\bured}="F55", 
(0,3)*={\bured}="F14", (1,3)*={\bugre}="F24", (2,3)*={\bugre}="F34", (3,3)*={\bugre}="F44",  
(0,2)*={\bugre}="F13", (1,2)*={\bubla}="F23", (2,2)*={\bubla}="F33", 
(0,1)*={\bugre}="F12", (1,1)*={\bubla}="F22",
(0,0)*={\bugre}="F11",
"F11";(0.5,0) **@{-}, (0.7,0)*={e_2},
"F24";(1,0.6) **@{-}, (1.2,0.6)*={e_1}, "F34";(2,1.6) **@{-}, (2.2,1.6)*={e_2}, "F44";(3,2.6) **@{-}, (3.2,2.6)*={e_3},
"F15";(0,5.5) **@{-}, "F35";(2,5.5) **@{-}, "F45";(3,5.5) **@{-}, "F55";(4,5.5) **@{-},
"F25";(1,5.5) **@{-},  "F55";(-0.5,4) **@{-}, "F66";(-0.5,5) **@{-},
"F45";(3.5,3.5) **@{-}, "F56";(4.5,4.5) **@{-},
"F14";(0,-0.5) **@{-}, "F12";(1.5,1) **@{-}, 
"F23";(2.5,2) **@{-},  "F13";"F23" **@{-},
"F44";"F35" **@{-}, "F25";"F34" **@{-}, "F34";(3.4,3) **@{-},  
"F24";"F15" **@{-},  "F24";"F34" **@{-}, "F24";"F14" **@{-},
"F22";"F13" **@{.}, "F25";"F16" **@{.}, "F35";"F26" **@{.}, "F45";"F36" **@{.},
"F55";"F46" **@{.}, "F45";"F56" **@{.}, "F35";"F46" **@{.}, "F25";"F36" **@{.}, "F15";"F26" **@{.},
"F11";"F44" **@{.}, "F13";"F22" **@{.}, "F13";"F14" **@{.}, 
"F44";"F45" **@{.}, "F34";"F35" **@{.}, "F33";"F24" **@{.}, "F24";"F25" **@{.}, "F23";"F14" **@{.},
"F14";"F15" **@{.}, "F12";"F34" **@{.}, "F13";"F24" **@{.},
(-0.2,-0.3)*={\lambda_{11}}, (1.6,1)*={e_3}, (2.6,2)*={e_4},
(3.5,3)*={\lambda_{44}}, 
(3.7,3.5)*={e_4}, 
(4.3,3.8)*={-\lambda_{55}}, (4.7,4.5)*={e_0}, (5.3,4.8)*={-\lambda_{66}},
(-0.2,0.6)*={\lambda_{12}}, (-0.2,1.6)*={\lambda_{13}}, (-0.4,2.9)*={\lambda_{14}},  (-0.5,5)*={e_6},
(0,5.5)*={e_1}, (1,5.5)*={e_2}, (2,5.5)*={e_3}, (3,5.5)*={e_4}, (4,5.5)*={e_5},
(2.5,2.9)*={\lambda_{34}}, (1.5,2.9)*={\lambda_{24}},
(1.2,1.5)*={e_1},  (2.2,2.5)*={e_2}, 
(2.5,3.9)*={-\lambda_{45}},(1.5,3.9)*={-\lambda_{35}},(0.5,3.9)*={-\lambda_{25}},(-0.5,3.9)*={-\lambda_{15}},
\end{xy}&&
\begin{xy}<24pt,0pt>:
(0,5)*={\bubla}="F16", (1,5)*={\bubla}="F26", (2,5)*={\bubla}="F36", (3,5)*={\bublu}="F46", 
                                                                     (4,5)*={\bugre}="F56", (5,5)*={\bured}="F66", 
(0,4)*={\bugre}="F15", (1,4)*={\bugre}="F25", (2,4)*={\bugre}="F35", (3,4)*={\bured}="F45", (4,4)*={\bured}="F55", 
(0,3)*={\bured}="F14", (1,3)*={\bugre}="F24", (2,3)*={\bugre}="F34", (3,3)*={\bugre}="F44",  
(0,2)*={\bugre}="F13", (1,2)*={\bubla}="F23", (2,2)*={\bubla}="F33", 
(0,1)*={\bugre}="F12", (1,1)*={\bubla}="F22",
(0,0)*={\bugre}="F11",
"F11";(0.5,0) **@{-}, (0.7,0)*={e_2},
"F24";(1,0.6) **@{-}, (1.2,0.6)*={e_1}, "F34";(2,1.6) **@{-}, (2.2,1.6)*={e_2}, "F44";(3,2.6) **@{-}, (3.2,2.6)*={e_3},
"F15";(0,5.5) **@{-}, "F35";(2,5.5) **@{-}, "F45";(3,5.5) **@{-}, "F56";(4,5.5) **@{-},
"F25";(1,5.5) **@{-}, "F45";(-0.5,4) **@{-}, "F66";(-0.5,5) **@{-}, "F56";(4.5,4.5) **@{-},
"F14";(0,-0.5) **@{-}, "F12";(1.5,1) **@{-}, 
"F23";(2.5,2) **@{-},   "F13";"F23" **@{-},
"F44";"F35" **@{-}, "F25";"F34" **@{-}, "F34";(3.4,3) **@{-},  
"F24";"F15" **@{-},  "F24";"F34" **@{-}, "F24";"F14" **@{-},
"F55";"F46" **@{-}, 
"F22";"F13" **@{.}, "F25";"F16" **@{.}, "F35";"F26" **@{.}, "F45";"F36" **@{.},
"F35";"F46" **@{.}, "F25";"F36" **@{.}, "F15";"F26" **@{.},
"F11";"F44" **@{.}, "F13";"F22" **@{.}, "F13";"F14" **@{.}, 
"F44";"F45" **@{.}, "F34";"F35" **@{.}, "F33";"F24" **@{.}, "F24";"F25" **@{.}, "F23";"F14" **@{.},
"F14";"F15" **@{.}, "F12";"F34" **@{.}, "F13";"F24" **@{.},
"F45";"F55" **@{.}, "F55";"F56" **@{.}, 
(-0.2,-0.3)*={\lambda_{11}}, (1.6,1)*={e_3}, (2.6,2)*={e_4},
(3.5,3)*={\lambda_{44}}, 
(4.3,3.8)*={\lambda_{55}}, (4.7,4.5)*={e_5}, (5.3,4.8)*={-\lambda_{66}},
(-0.2,0.6)*={\lambda_{12}}, (-0.2,1.6)*={\lambda_{13}}, (-0.4,2.9)*={\lambda_{14}},  (-0.5,5)*={e_6},
(0,5.5)*={e_1}, (1,5.5)*={e_2}, (2,5.5)*={e_3}, (3,5.5)*={e_4}, (4,5.5)*={e_5},
(2.5,2.9)*={\lambda_{34}}, (1.5,2.9)*={\lambda_{24}},
(1.2,1.5)*={e_1},  (2.2,2.5)*={e_2}, 
(3.1,3.8)*={-\lambda_{45}}, (1.5,3.9)*={-\lambda_{35}},(0.5,3.9)*={-\lambda_{25}},(-0.5,3.9)*={-\lambda_{15}},
(3.5,4.9)*={-\lambda_{56}}, 
\end{xy}
\end{array}$$

$$\begin{array}{ccc}
(\gamma_{45},\gamma_{35})&\phantom{xx}&(\gamma_{35},\gamma_{25})\\ \\
\begin{xy}<24pt,0pt>:
(0,5)*={\bubla}="F16", (1,5)*={\bubla}="F26", (2,5)*={\bublu}="F36", (3,5)*={\bugre}="F46", 
                                                                     (4,5)*={\bugre}="F56", (5,5)*={\bured}="F66", 
(0,4)*={\bugre}="F15", (1,4)*={\bugre}="F25", (2,4)*={\bured}="F35", (3,4)*={\bured}="F45", (4,4)*={\bugre}="F55", 
(0,3)*={\bured}="F14", (1,3)*={\bugre}="F24", (2,3)*={\bugre}="F34", (3,3)*={\bublu}="F44",  
(0,2)*={\bugre}="F13", (1,2)*={\bubla}="F23", (2,2)*={\bubla}="F33", 
(0,1)*={\bugre}="F12", (1,1)*={\bubla}="F22",
(0,0)*={\bugre}="F11",
"F11";(0.5,0) **@{-}, (0.7,0)*={e_2},
"F24";(1,0.6) **@{-}, (1.2,0.6)*={e_1}, "F34";(2,1.6) **@{-}, (2.2,1.6)*={e_2}, "F45";(3,2.6) **@{-}, (3.2,2.6)*={e_3},
"F55";(4,3.6) **@{-}, (4.2,3.6)*={e_4},
"F15";(0,5.5) **@{-}, "F35";(2,5.5) **@{-}, "F46";(3,5.5) **@{-}, "F56";(4,5.5) **@{-},
"F25";(1,5.5) **@{-}, "F35";(-0.5,4) **@{-}, "F66";(-0.5,5) **@{-}, "F56";(4.5,4.5) **@{-},
"F14";(0,-0.5) **@{-}, "F12";(1.5,1) **@{-}, 
"F23";(2.5,2) **@{-}, "F13";"F23" **@{-},
"F44";"F35" **@{-}, "F25";"F34" **@{-},  "F34";(3.4,3) **@{-},  
"F24";"F15" **@{-},  "F24";"F34" **@{-}, "F24";"F14" **@{-},
"F55";"F46" **@{-}, "F45";(4.5,4) **@{-}, "F45";"F36" **@{-},
"F22";"F13" **@{.}, "F25";"F16" **@{.}, "F35";"F26" **@{.},
"F25";"F36" **@{.}, "F15";"F26" **@{.},
"F11";"F55" **@{.}, "F13";"F22" **@{.}, "F13";"F14" **@{.}, 
"F34";"F35" **@{.}, "F33";"F24" **@{.}, "F24";"F25" **@{.}, "F23";"F14" **@{.},
"F14";"F15" **@{.}, "F12";"F34" **@{.}, "F13";"F24" **@{.},
"F55";"F56" **@{.}, "F35";"F45" **@{.}, "F45";"F46" **@{.}, 
(-0.2,-0.3)*={\lambda_{11}}, (1.6,1)*={e_3}, (2.6,2)*={e_4},
(3.5,3)*={e_5}, 
(4.4,4)*={\lambda_{55}}, (4.7,4.5)*={e_5}, (5.3,4.8)*={-\lambda_{66}},
(-0.2,0.6)*={\lambda_{12}}, (-0.2,1.6)*={\lambda_{13}}, (-0.4,2.9)*={\lambda_{14}},  (-0.5,5)*={e_6},
(0,5.5)*={e_1}, (1,5.5)*={e_2}, (2,5.5)*={e_3}, (3,5.5)*={e_4}, (4,5.5)*={e_5},
(2.5,2.9)*={\lambda_{34}}, (1.5,2.9)*={\lambda_{24}}, (1.2,1.5)*={e_1},  (2.2,2.5)*={e_2}, 
(3.3,3.8)*={\lambda_{45}}, (1.5,3.9)*={-\lambda_{35}},(0.5,3.9)*={-\lambda_{25}},(-0.5,3.9)*={-\lambda_{15}},
(3.5,4.9)*={-\lambda_{56}}, (2.5,4.9)*={-\lambda_{46}},
\end{xy}&\phantom{xx}&
\begin{xy}<24pt,0pt>:
(0,5)*={\bubla}="F16", (1,5)*={\bublu}="F26", (2,5)*={\bugre}="F36", (3,5)*={\bugre}="F46", 
                                                                     (4,5)*={\bugre}="F56", (5,5)*={\bured}="F66", 
(0,4)*={\bugre}="F15", (1,4)*={\bured}="F25", (2,4)*={\bured}="F35", (3,4)*={\bugre}="F45", (4,4)*={\bugre}="F55", 
(0,3)*={\bured}="F14", (1,3)*={\bugre}="F24", (2,3)*={\bublu}="F34", (3,3)*={\bubla}="F44",  
(0,2)*={\bugre}="F13", (1,2)*={\bubla}="F23", (2,2)*={\bubla}="F33", 
(0,1)*={\bugre}="F12", (1,1)*={\bubla}="F22",
(0,0)*={\bugre}="F11",
"F11";(0.5,0) **@{-}, (0.7,0)*={e_2},
"F24";(1,0.6) **@{-}, (1.2,0.6)*={e_1}, "F35";(2,1.6) **@{-}, (2.2,1.6)*={e_2}, "F45";(3,2.6) **@{-}, (3.2,2.6)*={e_3},
"F55";(4,3.6) **@{-}, (4.2,3.6)*={e_4},
"F15";(0,5.5) **@{-}, "F36";(2,5.5) **@{-}, "F46";(3,5.5) **@{-}, "F56";(4,5.5) **@{-},
"F25";(1,5.5) **@{-}, "F25";(-0.5,4) **@{-}, "F66";(-0.5,5) **@{-}, "F56";(4.5,4.5) **@{-},
"F14";(0,-0.5) **@{-}, "F12";(1.5,1) **@{-}, 
"F23";(2.5,2) **@{-},   "F13";"F23" **@{-},
"F25";"F34" **@{-}, "F34";(3.4,3) **@{-},  
"F24";"F15" **@{-},  "F24";"F34" **@{-}, "F24";"F14" **@{-},
"F55";"F46" **@{-},  "F35";(4.5,4) **@{-}, "F45";"F36" **@{-}, "F35";"F26" **@{-},
"F22";"F13" **@{.}, "F25";"F16" **@{.},
"F15";"F26" **@{.},
"F11";"F55" **@{.}, "F13";"F22" **@{.}, "F13";"F14" **@{.}, 
"F33";"F24" **@{.}, "F24";"F25" **@{.}, "F23";"F14" **@{.},
"F14";"F15" **@{.}, "F12";"F45" **@{.}, "F13";"F24" **@{.},
"F55";"F56" **@{.}, "F45";"F46" **@{.}, "F44";"F35" **@{.},
"F25";"F35" **@{.}, "F35";"F36" **@{.},
(-0.2,-0.3)*={\lambda_{11}}, (1.6,1)*={e_3}, (2.6,2)*={e_4},
(3.5,3)*={e_5}, 
(4.4,4)*={\lambda_{55}}, (4.7,4.5)*={e_5}, (5.3,4.8)*={-\lambda_{66}},
(-0.2,0.6)*={\lambda_{12}}, (-0.2,1.6)*={\lambda_{13}}, (-0.4,2.9)*={\lambda_{14}},  (-0.5,5)*={e_6},
(0,5.5)*={e_1}, (1,5.5)*={e_2}, (2,5.5)*={e_3}, (3,5.5)*={e_4}, (4,5.5)*={e_5},
(1.5,2.9)*={\lambda_{24}}, (1.2,1.5)*={e_1},  (2.2,2.5)*={e_2}, (3.5,3.9)*={\lambda_{45}}, 
(2.4,3.9)*={\lambda_{35}},(0.5,3.9)*={-\lambda_{25}},(-0.5,3.9)*={-\lambda_{15}},
(3.5,4.9)*={-\lambda_{56}}, (2.5,4.9)*={-\lambda_{46}}, (1.5,4.9)*={-\lambda_{36}},
\end{xy}\end{array}$$
$$\begin{array}{ccc}
(\gamma_{25},\gamma_{15})&\phantom{xx}&(\gamma_{15},\gamma_{66})\\ \\
\begin{xy}<24pt,0pt>:
(0,5)*={\bublu}="F16", (1,5)*={\bugre}="F26", (2,5)*={\bugre}="F36", (3,5)*={\bugre}="F46", 
                                                                     (4,5)*={\bugre}="F56", (5,5)*={\bured}="F66", 
(0,4)*={\bured}="F15", (1,4)*={\bured}="F25", (2,4)*={\bugre}="F35", (3,4)*={\bugre}="F45", (4,4)*={\bugre}="F55", 
(0,3)*={\bured}="F14", (1,3)*={\bublu}="F24", (2,3)*={\bubla}="F34", (3,3)*={\bubla}="F44",  
(0,2)*={\bugre}="F13", (1,2)*={\bubla}="F23", (2,2)*={\bubla}="F33", 
(0,1)*={\bugre}="F12", (1,1)*={\bubla}="F22",
(0,0)*={\bugre}="F11",
"F11";(0.5,0) **@{-}, (0.7,0)*={e_2},
"F25";(1,0.6) **@{-}, (1.2,0.6)*={e_1}, "F35";(2,1.6) **@{-}, (2.2,1.6)*={e_2}, "F45";(3,2.6) **@{-}, (3.2,2.6)*={e_3},
"F55";(4,3.6) **@{-}, (4.2,3.6)*={e_4},
"F15";(0,5.5) **@{-}, "F36";(2,5.5) **@{-}, "F46";(3,5.5) **@{-}, "F56";(4,5.5) **@{-},
"F26";(1,5.5) **@{-}, "F66";(-0.5,5) **@{-}, "F56";(4.5,4.5) **@{-},
"F14";(0,-0.5) **@{-},"F12";(1.5,1) **@{-}, "F34";(3.4,3) **@{-}, "F23";(2.5,2) **@{-}, 
"F22";"F23" **@{-},   "F13";"F23" **@{-},
"F24";"F15" **@{-},  "F24";"F34" **@{-}, "F24";"F14" **@{-},
"F55";"F46" **@{-}, "F44";"F45" **@{-}, "F25";(4.5,4) **@{-}, "F45";"F36" **@{-}, 
"F35";"F26" **@{-},"F34";"F35" **@{-}, "F25";"F16" **@{-},
"F22";"F13" **@{.}, 
"F11";"F55" **@{.}, "F13";"F22" **@{.},
"F33";"F24" **@{.}, "F23";"F14" **@{.},
"F14";"F15" **@{.}, "F12";"F45" **@{.}, "F13";"F35" **@{.},
"F55";"F56" **@{.}, "F45";"F46" **@{.}, "F44";"F35" **@{.},
"F35";"F36" **@{.}, "F25";"F34" **@{.}, "F15";"F25" **@{.}, "F25";"F26" **@{.},
(-0.2,-0.3)*={\lambda_{11}}, (1.6,1)*={e_3}, (2.6,2)*={e_4}, (3.5,3)*={e_5}, 
(4.4,4)*={\lambda_{55}}, (4.7,4.5)*={e_5}, (5.3,4.8)*={-\lambda_{66}},
(-0.2,0.6)*={\lambda_{12}}, (-0.2,1.6)*={\lambda_{13}}, (-0.4,2.9)*={\lambda_{14}},  (-0.5,5)*={e_6},
(0,5.5)*={e_1}, (1,5.5)*={e_2}, (2,5.5)*={e_3}, (3,5.5)*={e_4}, (4,5.5)*={e_5},
(1.2,1.5)*={e_1},  (2.2,2.5)*={e_2}, 
(3.5,3.9)*={\lambda_{45}}, (2.4,3.9)*={\lambda_{35}},(1.5,3.9)*={\lambda_{25}},(-0.5,3.9)*={-\lambda_{15}},
(3.5,4.9)*={-\lambda_{56}}, (2.5,4.9)*={-\lambda_{46}}, (1.5,4.9)*={-\lambda_{36}}, (0.5,4.9)*={-\lambda_{26}},
\end{xy}&&
\begin{xy}<24pt,0pt>:
(0,5)*={\bugre}="F16", (1,5)*={\bugre}="F26", (2,5)*={\bugre}="F36", (3,5)*={\bugre}="F46", 
                                                                     (4,5)*={\bugre}="F56", (5,5)*={\bured}="F66", 
(0,4)*={\bured}="F15", (1,4)*={\bugre}="F25", (2,4)*={\bugre}="F35", (3,4)*={\bugre}="F45", (4,4)*={\bugre}="F55", 
(0,3)*={\bugre}="F14", (1,3)*={\bubla}="F24", (2,3)*={\bubla}="F34", (3,3)*={\bubla}="F44",  
(0,2)*={\bugre}="F13", (1,2)*={\bubla}="F23", (2,2)*={\bubla}="F33", 
(0,1)*={\bugre}="F12", (1,1)*={\bubla}="F22",
(0,0)*={\bugre}="F11",
"F11";(0.5,0) **@{-}, (0.7,0)*={e_2},
"F25";(1,0.6) **@{-}, (1.2,0.6)*={e_1}, "F35";(2,1.6) **@{-}, (2.2,1.6)*={e_2}, "F45";(3,2.6) **@{-}, (3.2,2.6)*={e_3},
"F55";(4,3.6) **@{-}, (4.2,3.6)*={e_4},
"F16";(0,5.5) **@{-}, "F36";(2,5.5) **@{-}, "F46";(3,5.5) **@{-}, "F56";(4,5.5) **@{-},
"F26";(1,5.5) **@{-}, "F66";(-0.5,5) **@{-}, "F56";(4.5,4.5) **@{-},
"F14";(0,-0.5) **@{-},"F12";(1.5,1) **@{-}, "F34";(3.4,3) **@{-}, "F23";(2.5,2) **@{-}, 
"F22";"F23" **@{-},   "F13";"F23" **@{-},
  "F24";"F34" **@{-}, "F24";"F14" **@{-},
"F55";"F46" **@{-}, "F44";"F45" **@{-}, "F25";(4.5,4) **@{-}, "F45";"F36" **@{-}, 
"F35";"F26" **@{-},"F34";"F35" **@{-}, "F25";"F16" **@{-}, "F15";"F25" **@{-}, "F14";"F15" **@{-}, 
"F22";"F13" **@{.}, 
"F11";"F55" **@{.}, "F13";"F22" **@{.},
"F33";"F24" **@{.}, "F23";"F14" **@{.},
"F12";"F45" **@{.}, "F13";"F35" **@{.},
"F55";"F56" **@{.}, "F45";"F46" **@{.}, "F44";"F35" **@{.},
"F35";"F36" **@{.}, "F25";"F34" **@{.},  "F25";"F26" **@{.},"F24";"F15" **@{.}, "F15";"F16" **@{.},
(-0.2,-0.3)*={\lambda_{11}}, (1.6,1)*={e_3}, (2.6,2)*={e_4}, (3.5,3)*={e_5}, 
(4.4,4)*={\lambda_{55}}, (4.7,4.5)*={e_5}, (5.3,4.8)*={-\lambda_{66}},
(-0.2,0.6)*={\lambda_{12}}, (-0.2,1.6)*={\lambda_{13}}, (-0.2,2.5)*={\lambda_{14}},  (-0.5,4.9)*={-\lambda_{16}},
(0,5.5)*={e_1}, (1,5.5)*={e_2}, (2,5.5)*={e_3}, (3,5.5)*={e_4}, (4,5.5)*={e_5},
(1.2,1.5)*={e_1},  (2.2,2.5)*={e_2}, 
(3.5,3.9)*={\lambda_{45}}, (2.4,3.9)*={\lambda_{35}},(1.5,3.9)*={\lambda_{25}},(-0.3,3.9)*={\lambda_{15}},
(3.5,4.9)*={-\lambda_{56}}, (2.5,4.9)*={-\lambda_{46}}, (1.5,4.9)*={-\lambda_{36}}, (0.5,4.9)*={-\lambda_{26}},
\end{xy}
\end{array}$$


$$\begin{array}{ccc}
(\gamma_{66},\gamma_{16})&&(\gamma_{16},+\infty)\\ \\
\makebox(145,110){6 more flops in the upper row}
&&
\begin{xy}<24pt,0pt>:
(0,5)*={\bured}="F16", (1,5)*={\bugre}="F26", (2,5)*={\bugre}="F36", (3,5)*={\bugre}="F46", 
                                                                     (4,5)*={\bugre}="F56", (5,5)*={\bugre}="F66", 
(0,4)*={\bugre}="F15", (1,4)*={\bubla}="F25", (2,4)*={\bubla}="F35", (3,4)*={\bubla}="F45", (4,4)*={\bubla}="F55", 
(0,3)*={\bugre}="F14", (1,3)*={\bubla}="F24", (2,3)*={\bubla}="F34", (3,3)*={\bubla}="F44",  
(0,2)*={\bugre}="F13", (1,2)*={\bubla}="F23", (2,2)*={\bubla}="F33", 
(0,1)*={\bugre}="F12", (1,1)*={\bubla}="F22",
(0,0)*={\bugre}="F11",
"F11";(0.5,0) **@{-}, (0.7,0)*={e_2},
"F25";(1,0.6) **@{-}, (1.2,0.6)*={e_1}, "F35";(2,1.6) **@{-}, (2.2,1.6)*={e_2}, "F45";(3,2.6) **@{-}, (3.2,2.6)*={e_3},
"F55";(4,3.6) **@{-}, (4.2,3.6)*={e_4}, (5.2,4.6)*={e_5}, "F66"; (5,4.5) **@{-}, "F66"; (5.5,5) **@{-},
"F16";(0,5) **@{-}, "F36";(2,5) **@{-}, "F46";(3,5) **@{-}, "F56";(4,5) **@{-},
"F26";(1,5) **@{-}, "F66";(0,5) **@{-}, 
"F14";(0,-0.5) **@{-},"F12";(1.5,1) **@{-}, "F34";(3.4,3) **@{-}, "F23";(2.5,2) **@{-}, 
"F11";"F16" **@{-},
"F22";"F26" **@{-},   
"F33";"F36" **@{-},
"F44";"F46" **@{-},
"F55";"F56" **@{-},
"F13";"F33" **@{-},
 "F14";"F44" **@{-}, 
 "F15";(4.5,4) **@{-}, 
"F11";"F66" **@{.},
"F12";"F56" **@{.}, 
"F13";"F46" **@{.},
"F13";"F46" **@{.},
"F14";"F36" **@{.},
"F15";"F26" **@{.},
"F22";"F13" **@{.}, 
"F23";"F14" **@{.},
"F33";"F15" **@{.}, 
"F34";"F16" **@{.},
 "F44";"F26" **@{.},
 "F55";"F46" **@{.},
 "F45";"F36" **@{.},
(-0.2,-0.3)*={\lambda_{11}}, (1.6,1)*={e_3}, (2.6,2)*={e_4}, (3.5,3)*={e_5}, 
(4.6,4)*={e_{6}}, 
(-0.2,0.6)*={\lambda_{12}}, (-0.2,1.6)*={\lambda_{13}},
(-0.2,2.5)*={\lambda_{14}}, (-0.5,4.9)*={\lambda_{16}},
(1.4,5.3)*={\lambda_{26}}, (2.4,5.3)*={\lambda_{36}}, (3.4,5.3)*={\lambda_{46}}, 
(4.4,5.3)*={\lambda_{56}}, (5.4,5.3)*={\lambda_{66}},
(-0.2,3.6)*={\lambda_{15}},
\end{xy}
\end{array}$$

Let us note that similar diagrams are provided in
\cite{FuMukaiFlops}. The method used in that paper is similar to ours
since the starting point is Hilb-Chow resolution but there each step
involves several flops. However, the resulting diagrams in
\cite{FuMukaiFlops} are not quite correct since they imply that the
components of the exceptional fiber in the final chamber are one
$\bP^2$ and all the rest $F_1$'s.

\section{Appendix}

\subsection{Contraction to the nilpotent cone}\label{nilpotent-cone}
In this section we recall known facts about flag varieties of
simple Lie groups and about contractions to the nilpotent cone. This subject
is classical and well documented, see e.g.~\cite{Slodowy} or
\cite{nilorbits} with the references therein. For a more recent survey 
see also \cite{Namikawarev}. 
Our point of view is
somehow more geometric, related to homogeneous varieties, in the
spirit of \cite{Ottaviani}, and directed on understanding the picture
at the level of the related root systems. We refer to
\cite[Ch.~18]{TauvelRupert} for generalities on root systems.

Let $G$ be a complex simple algebraic group with the Lie algebra
$\g$. By $R$ we denote the set of roots of $\g$ and consider the
lattices of roots and of weights $\Lambda_R \subset \Lambda_W$ of the
algebra (or group) in question; let $V=\Lambda_R\otimes\R$. By
$B$ we denote a Borel subgroup of $G$ and $F=G/B$ is its flag variety.
It is known that we have a natural isomorphism $\Pic F\iso\Lambda_W$
under which $\Nef(F)\subset N^1(F)$ is identified with the Weyl
chamber in $V$. Under this identification any irreducible
representation $U_w$ of $G$ with the highest weight $w$ corresponds to the
complete linear system on $F$ of a nef line bundle whose associated
map, $F\ra \bP(U_w)$, maps $F$ to the unique closed orbit.

Moreover, the sum of the positive roots $\rho=\sum_{\alpha\in
  R^+}\alpha$ can be identified with the anticanonical class $-K_F$
and the Weyl formula, describing the dimension of irreducible
representations, yields the Hilbert polynomial on $\Pic F$. That is, for
every $\lambda\in\Lambda_W$, the dimension formula, or the Euler
characteristic of the respective line bundle on $F$, can be written as
a polynomial
$$H(\lambda)=\prod_{\alpha\in R^+} 
\frac{((\lambda+\rho/2),\alpha)}{(\rho/2,\alpha)}$$ where $(\ \ ,\ )$
denotes the Killing form and $R^+$ is the set of positive roots.  Note
that the above polynomial is of degree equal to $\dim F$;
$H(-\lambda-\rho)=(-1)^{\dim F}H(\lambda)$ is exactly Serre duality.

The Killing form allows to relate $V$ to its dual. For every root
$\alpha\in R$ we set $V^*\ni\alpha^\vee=(v\mapsto
2(\alpha,v)/(\alpha,\alpha))$. The facets of the Weyl chamber are
supported by the simple roots, that is they are hypersurfaces defined
by forms $\alpha^\vee$. 

\begin{lemma}\label{contrF}
  The extremal contraction $\hat\pi_\alpha: F\ra F_\alpha$ associated with
  the facet $\alpha^\perp\cap\Nef(F)$ is a $\bP^1$ bundle and
  $\alpha^\vee$ is the class of the extremal curve in $N_1(F)$. 
  In $\Pic(F) = \Lambda_W$ the
  class of the relative cotangent bundle, $\Omega(F/F_\alpha)$,  is $-\alpha$.
\end{lemma}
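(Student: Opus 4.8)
The plan is to identify the extremal contraction $\hat\pi_\alpha$ with a homogeneous fibration and then compute the relative (co)tangent bundle directly on the fibers. First I would recall that the facet $\alpha^\perp\cap\Nef(F)$, being supported by the simple root $\alpha$ with the Weyl chamber on one side, corresponds under the identification $\Pic F\iso\Lambda_W$ to the parabolic subgroup $P_\alpha\supset B$ obtained by adjoining to $B$ the root subgroup of $-\alpha$; thus $F_\alpha=G/P_\alpha$, the fibers of $\hat\pi_\alpha:G/B\ra G/P_\alpha$ are $P_\alpha/B\iso\bP^1$, and $\hat\pi_\alpha$ is a $\bP^1$-bundle (a locally trivial fibration, since $G\ra G/P_\alpha$ is a $P_\alpha$-bundle and $P_\alpha$ acts on $P_\alpha/B\iso\bP^1$). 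Since the fiber is a rational curve contracted by $\hat\pi_\alpha$, its class generates the extremal ray $R$ dual to the facet; to see that this class is exactly $\alpha^\vee\in N_1(F)$, I would intersect it with line bundles $\cL_\lambda$ corresponding to $\lambda\in\Lambda_W$: the degree of $\cL_\lambda$ restricted to the fiber $P_\alpha/B\iso\bP^1$ is $\langle\lambda,\alpha^\vee\rangle=2(\lambda,\alpha)/(\alpha,\alpha)$, which is the pairing of $\lambda$ with $\alpha^\vee$, so the numerical class of the fiber is $\alpha^\vee$.

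Next, for the relative cotangent bundle, I would use the standard description of the tangent bundle of a rational homogeneous space together with restriction to a fiber. The fiber $P_\alpha/B\iso\bP^1$ sits inside $G/B$, and the relative tangent sheaf $T_{F/F_\alpha}$ restricted to this fiber is the tangent bundle of $\bP^1$, i.e.\ $\cO_{\bP^1}(2)$; hence $\Omega_{F/F_\alpha}$ restricted to the fiber is $\cO_{\bP^1}(-2)$, which has degree $-2=\langle-\alpha,\alpha^\vee\rangle$. Since $\Omega_{F/F_\alpha}$ is a line bundle on $F$ and $\Pic F\to\Pic(\text{fiber})$ determines a class in $\Lambda_W$ up to the sublattice vanishing on $\alpha^\vee$, I would pin down the class precisely by equivariance: $\Omega_{F/F_\alpha}$ is a $G$-linearized line bundle, and the weight of its fiber at the base point $eB/B$ — which is the $B$-weight on the cotangent space to $P_\alpha/B$ at that point, namely the weight on $(\g_{-\alpha})^*\iso\g_\alpha$, i.e.\ $-(-\alpha)$... more carefully, $T_{F/F_\alpha}$ at $eB$ is the $B$-module $\mathfrak p_\alpha/\mathfrak b\iso\g_{-\alpha}$ of weight $-\alpha$, so $\Omega_{F/F_\alpha}$ at $eB$ has weight $\alpha$. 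Here I must be careful with the sign convention for how $B$-weights at $eB$ translate into elements of $\Lambda_W\iso\Pic F$: with the convention used in \ref{root-systems} (simple positive roots $e_i$, dominant = nef), the line bundle whose fiber at $eB$ carries $B$-weight $\alpha$ corresponds to $-\alpha\in\Lambda_W$, because the global sections of a dominant bundle $\cL_\lambda$ realize $U_\lambda$ with lowest weight $-\lambda$ occurring at $eB$. This gives $[\Omega_{F/F_\alpha}]=-\alpha$ in $\Pic F=\Lambda_W$.

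The main obstacle, and the step I would spend the most care on, is exactly this sign bookkeeping: matching the several conventions in play — which simple root the parabolic $P_\alpha$ omits, whether the Weyl chamber is the dominant or antidominant cone under $\Pic F\iso\Lambda_W$, and the direction of the identification of a $B$-weight at the base point with an element of the weight lattice. A clean way to bypass part of this is to verify the claim on the minuscule/low-rank examples where everything is explicit — e.g.\ $G=SL_3$, $F$ the full flag variety, $\hat\pi_\alpha$ the projection to $\bP^2$, where $\Omega_{F/\bP^2}$ is directly computable and equals $\cO_F(-\text{(simple root)})$ in $\Pic F\iso\Z^2$ — and then note that the general statement is forced by $G$-equivariance and the compatibility of all the identifications across the simple factors. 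I would also remark that the formula $[\Omega_{F/F_\alpha}]=-\alpha$ is consistent with the relative Euler sequence $0\to\Omega_{F/F_\alpha}\to\hat\pi_\alpha^*(\cdots)\to\cdots$ and with $-K_F=\rho$ restricting to degree $\langle\rho,\alpha^\vee\rangle$ on the fiber, providing an arithmetic check that the sign is correct.
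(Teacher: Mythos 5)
Your argument is correct, but it is not the route the paper takes. You prove the lemma by pure Lie theory: you identify $\hat\pi_\alpha$ with the homogeneous fibration $G/B\ra G/P_\alpha$ (fiber $P_\alpha/B\iso\bP^1$), read off the class of the fiber from the degrees $\langle\lambda,\alpha^\vee\rangle$ of restricted line bundles, and compute $\Omega(F/F_\alpha)$ as the associated bundle $G\times_B(\mathfrak p_\alpha/\fb)^*\iso G\times_B\g_\alpha$, pinning down the sign by the convention under which $G\times_B\C_{-\lambda}$ corresponds to $\lambda\in\Lambda_W$ (equivalently, by the normalization $-K_F=\rho$, since $\det(\g/\fb)$ has weight $-\rho$). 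The paper instead stays entirely inside the numerical framework it sets up around the Weyl dimension formula: it observes that the Hilbert polynomial $H(\lambda)=\prod_{\alpha\in R^+}((\lambda+\rho/2),\alpha)/((\rho/2),\alpha)$ restricted to $\alpha^\perp$ has degree $\dim F-1$ and that $\alpha^\vee(\rho)=2$, which forces the contraction of the facet $\alpha^\perp\cap\Nef(F)$ to be a $\bP^1$-bundle with fiber class $\alpha^\vee$, and then identifies the relative canonical class as $-\alpha$ from the relative Serre duality $H(s_\alpha(\lambda)-\alpha)=-H(\lambda)$. Your approach buys an explicit geometric model of the contraction and of $\Omega(F/F_\alpha)$ as an equivariant bundle (which is also what makes Lemma \ref{class-alpha} transparent), at the cost of the sign bookkeeping you rightly flag as the delicate point; the paper's approach avoids parabolics and linearizations altogether and extracts both conclusions from properties of a single polynomial, which is why the appendix introduces $H(\lambda)$ in the first place. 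Your consistency checks ($SL_3$, and $\deg(-K_F)|_{\text{fiber}}=\langle\rho,\alpha^\vee\rangle=2$) are sound, though the latter only constrains the pairing of $[\Omega(F/F_\alpha)]$ with $\alpha^\vee$, so the equivariant weight computation (or the paper's duality identity) is genuinely needed to fix the full class.
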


\begin{proof}
  Note that the restriction of the polynomial $H(\lambda)$ to the
  hyperplane $\alpha^\perp$ defined by the face $\alpha^\vee$ is of degree
  $\dim F-1$ and $\alpha^\vee(\rho)=2$,
  \cite[18.7.6]{TauvelRupert}. This means that the extremal
  contraction $F\ra F_\alpha$ associated with the facet
  $\alpha^\perp\cap\Nef(F)$ is a $\bP^1$ bundle and $\alpha^\vee$ is
  the class of the fiber. On the other hand,
  $\rho-\alpha\in\alpha^\perp$ and
  $H(s_\alpha(\lambda))-\alpha)=-H(\lambda)$, which is the relative
  duality
\end{proof}

Let $X$ be the total space of the cotangent bundle of $F$, that is
$X=\Spec_F(\Symm(TF))$. Recall that $TF=G \times_B \g/\fb$, where
$\fb\subset \g$ is tangent to $B$ and $B$ acts on $\g/\fb$ via adjoint
representation and the quotient $\g\ra\g/\fb$. Alternatively,
$T^*F=G\times_B \fu$ where $\fu\subset \g$ is the nilradical of
$\fb$. The variety $X$ is symplectic. 

Since $TF$ is spanned by its
global sections,which form the Lie algebra $\g$, we have a map $X\ra\g^*$ 
which contracts the zero section to $0$. The image is a normal variety called the
nilpotent cone,  which we denote it by $Y$.  The map 
$\pi: X\ra Y$ is a symplectic contraction.

\smallskip
Clearly, $N^1(X/Y)=N^1(F)$, $\Nef(X/Y)=\Nef(F)$ and every extremal
contraction $\hat\pi_\alpha: F\ra F_\alpha$, which is a $\bP^1$
bundle, extends to a divisorial contraction $\pi_\alpha: X\ra
X_\alpha$ with all nontrivial fibers being $\bP^1$. Let
$E_\alpha\subset X$ be the exceptional divisor of $\pi_\alpha$ and
$C_\alpha$ be a general fiber of $\pi_\alpha$ restricted to
$E_\alpha$.

\begin{lemma}\label{class-alpha}
  The class of $C_\alpha$ in $V^*=N_1(X/Y)$ is $\alpha^\vee$. The
  class of $E_\alpha$ in $\Pic X = \Lambda_W$ is $-\alpha$.
\end{lemma}
\begin{proof}
We have an exact sequence of vector bundles over $F$:
$$0\lra \hat\pi_\alpha^*(\Omega F_\alpha)\lra\Omega F\lra 
\Omega(F/F_\alpha)\lra 0$$ and the divisor $E_\alpha$ in the total
space of $\Omega F$ is the total space of the sub-bundle
$\pi_\alpha^*(\Omega F_\alpha)$. Thus, the restriction of its normal
to $F$ is the line bundle $\Omega(F/F_\alpha)$ hence the lemma follows
by \ref{contrF}.
\end{proof}

\begin{corollary}{\rm c.f.~\cite[(5.2)]{Hinich}} In the above
  situation, the intersection matrix $E_\alpha\cdot C_\beta$ is the
  negative of the Cartan matrix of the respective root system.
\end{corollary}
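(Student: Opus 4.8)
The plan is to derive the statement directly from Lemma~\ref{class-alpha} by unwinding the intersection pairing. First I would recall that $N_1(X/Y)$ is, by construction, the $\Q$-dual of $N^1(X/Y)=\Pic(X/Y)\otimes\Q$, with the intersection product playing the role of the duality pairing; since $\Pic(X/Y)=\Pic F$ is identified with the weight lattice $\Lambda_W\subset V$ (beginning of~\ref{nilpotent-cone}), this makes $N_1(X/Y)$ into $V^*$, with the coroots sitting inside it. This is precisely the identification under which Lemma~\ref{class-alpha} reads $[C_\beta]=\beta^\vee$ and $[E_\alpha]=-\alpha$.

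Then I would simply compute $E_\alpha\cdot C_\beta=\langle[E_\alpha],[C_\beta]\rangle=\langle-\alpha,\beta^\vee\rangle$, and substitute the explicit description $\beta^\vee=\bigl(v\mapsto 2(\beta,v)/(\beta,\beta)\bigr)$ from~\ref{nilpotent-cone} (with $(\ ,\ )$ the Killing form) to obtain
$$E_\alpha\cdot C_\beta=-\frac{2(\alpha,\beta)}{(\beta,\beta)},$$
which is exactly minus the $(\alpha,\beta)$-entry of the Cartan matrix of $R$. As a sanity check I would verify the diagonal value $E_\alpha\cdot C_\alpha=-2$ against the geometry appearing in the proof of Lemma~\ref{class-alpha}: $C_\alpha$ is a fibre of the $\bP^1$-bundle $\hat\pi_\alpha\colon F\to F_\alpha$, $E_\alpha$ is the total space of $\hat\pi_\alpha^*(\Omega F_\alpha)$, and the normal bundle of the zero section restricts to $\Omega(F/F_\alpha)|_{C_\alpha}=\cO_{\bP^1}(-2)$.

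There is essentially no obstacle here: the corollary is a formal consequence of Lemma~\ref{class-alpha} together with the standard identity $\langle\alpha,\beta^\vee\rangle=2(\alpha,\beta)/(\beta,\beta)$. The only point needing a little attention is bookkeeping of conventions — matching the lattice-theoretic coroot (the functional dual to $\Lambda_W$) with the Killing-form coroot $2(\beta,-)/(\beta,\beta)$, and noting that in the non-simply-laced cases the ``Cartan matrix'' obtained this way is the Cartan matrix of the (possibly dual) root system, which is immaterial for the statement as phrased. Once these identifications are pinned down, no further computation is required.
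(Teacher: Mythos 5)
Your proposal is correct and follows exactly the route the paper intends: the corollary is stated as an immediate consequence of Lemma~\ref{class-alpha} (classes $[E_\alpha]=-\alpha$ and $[C_\beta]=\beta^\vee$), and pairing them via $\langle\alpha,\beta^\vee\rangle=2(\alpha,\beta)/(\beta,\beta)$ gives minus the Cartan matrix entry, with the diagonal check $E_\alpha\cdot C_\alpha=\deg\Omega(F/F_\alpha)|_{C_\alpha}=-2$ confirming the normalization. The paper offers no separate argument beyond this, so there is nothing to add.
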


The above observation is the key for Brieskorn-Slodowy result on the
type  singularity of the nilpotent cone in of codimension $2$; it can be
expressed as follows:

\begin{theorem}\label{BrieskornSlodowy}{\rm (Brieskorn, Slodowy)}
  Let $\pi: X=G/B \ra Y$ be the contraction to the nilpotent cone.  If
  the root system of $G$ is of type $\bA_n$, $\bD_n$, $\bE_6$,
  $\bE_7$, $\bE_8$ then in codimension 2 the contraction $\pi$ is the
  resolution of a surface Du Val singularity of the same $\bA-\bD-\bE$
  type. If $G$ is of type $\bB_n$, $\bC_n$, $\bF_4$ and $\bG_2$ then
  in codimension 2 the contraction $\pi$ is the resolution of
  singularities of type $\bA_{2n-1}$, $\bD_{n+1}$, $\bE_6$ and $\bD_4$
  and the irreducible components of the exceptional set of $\pi$ are
  in bijection with the orbits of the action of the group of
  automorphisms of the Dynkin diagrams of latter type.
\end{theorem}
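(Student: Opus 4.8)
The plan is to deduce the theorem from the data already assembled in this section — the divisorial contractions $\pi_\alpha : X\to X_\alpha$ with exceptional divisors $E_\alpha$ and general fibres $C_\alpha$ of class $\alpha^\vee$, the identity $E_\alpha\cdot C_\beta=-A^{\g}_{\alpha\beta}$ (where $A^{\g}$ is the Cartan matrix of $\g$) from the Corollary above, the vertical slicing of \ref{symplectic-strata} and \ref{vertical-slicing}, and the classical fact recalled in the Introduction that a two-dimensional symplectic contraction is the minimal resolution of a Du Val singularity. First, a reduction to codimension two. The smooth locus of $Y$ is the regular nilpotent orbit, the subregular orbit $G\cdot x_0$ is the unique orbit of codimension $2$ in $Y$, and $\overline{G\cdot x_0}$ is the whole singular locus of $Y$ (all classical). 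Each irreducible component of $\overline{\pi^{-1}(G\cdot x_0)}$ has codimension $1$ in $X$ and maps onto $\overline{G\cdot x_0}$, so it is an irreducible maximal cycle; applying \ref{symplectic-strata} and \ref{vertical-slicing} to any one of them, and cutting $Y$ by $\dim Y-2$ general hyperplanes through a general point $s\in G\cdot x_0$, yields a two-dimensional local symplectic contraction $\pi' : X'\to Y'$ with $X'=\pi^{-1}(Y')$ smooth, $Y'$ singular only at $s$, and $(\pi')^{-1}(s)=\pi^{-1}(s)$. Hence $Y'$ has an $\bA-\bD-\bE$ singularity at $s$, of some type $\h$, and $\pi^{-1}(s)=\Gamma_1\cup\dots\cup\Gamma_r$ is the corresponding configuration of $(-2)$-curves, $r=\operatorname{rank}\h$.

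Next we locate the $E_\alpha$ on this slice. Each $E_\alpha$ is $\pi$-exceptional with $\pi(E_\alpha)=\overline{G\cdot x_0}$, so $E_\alpha\cap X'$ is a non-empty union of the $\Gamma_i$; and since the exceptional locus of $\pi$ equals $\bigcup_\alpha E_\alpha$, these unions exhaust $\{\Gamma_1,\dots,\Gamma_r\}$. For $\alpha\ne\beta$ the locus $E_\alpha\cap E_\beta$ has codimension $\ge2$ in $X$, hence for a general slice contains no component of $\pi^{-1}(s)$; thus the sets $G_\alpha:=\{\,\Gamma_i:\Gamma_i\subset E_\alpha\,\}$ partition $\{\Gamma_1,\dots,\Gamma_r\}$. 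Since $\pi_\alpha$ is elementary and divisorial with all non-trivial fibres lines of class $\alpha^\vee$, the members of $G_\alpha$ are exactly those fibres of $\pi_\alpha$ lying over $Y'$; in particular they are pairwise disjoint, and $E_\alpha\cap X'=\sum_{\Gamma_i\in G_\alpha}\Gamma_i$ with multiplicity one (intersect with any $\Gamma_i\in G_\alpha$ and use $E_\alpha\cdot C_\alpha=-2$ and disjointness). Taking $C_\beta$ to be any member of $G_\beta$ and computing intersections on the surface $X'$, the Corollary above gives
\begin{eq}\label{foldeq}
A^{\g}_{\alpha\beta}=-E_\alpha\cdot C_\beta=-\sum_{\Gamma_i\in G_\alpha}\Gamma_i\cdot C_\beta=\sum_{\Gamma_i\in G_\alpha}A^{\h}_{i,j(\beta)},
\end{eq}
where $A^{\h}$ is the (symmetric) Cartan matrix of $\h$ and $j(\beta)\in G_\beta$; moreover the left-hand side is independent of the representative $j(\beta)$, since all members of $G_\beta$ are numerically equivalent to $C_\beta$ in $X$.

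Finally we identify the type. Equation \ref{foldeq}, together with the independence of its value on the choice of representative in $G_\beta$ and (by the same identity with the roles of $\alpha,\beta$ exchanged) in $G_\alpha$, says precisely that the partition $\{G_\alpha\}$ exhibits $\g$ as a folding of $\h$ by a group $\Gamma$ of automorphisms of the Dynkin diagram of $\h$, each $G_\alpha$ being a $\Gamma$-orbit of nodes. Going through the short list of $\bA-\bD-\bE$ diagrams that fold onto the Dynkin diagram of a simple Lie algebra — the relations $\bA_{2n-1}/\bZ_2$, $\bD_{n+1}/\bZ_2$, $\bE_6/\bZ_2=\bF_4$, $\bD_4/\sigma_3=\bG_2$ of \ref{root-systems} — and matching the folded Cartan matrix with $A^{\g}$, one obtains: $\Gamma$ trivial and $\h=\g$ when $\g$ is of type $\bA_n$, $\bD_n$, $\bE_6$, $\bE_7$, $\bE_8$; and $(\h,\Gamma)=(\bA_{2n-1},\bZ_2)$, $(\bD_{n+1},\bZ_2)$, $(\bE_6,\bZ_2)$, $(\bD_4,\sigma_3)$ when $\g$ is of type $\bB_n$, $\bC_n$, $\bF_4$, $\bG_2$. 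In every case the restriction of $\pi$ over a transverse slice to $\overline{G\cdot x_0}$ is the minimal resolution of a Du Val singularity of type $\h$, and the components of the exceptional locus of $\pi$ over $\overline{G\cdot x_0}$ are the divisors $E_\alpha$ — hence in bijection with the simple roots of $\g$, equivalently with the $\Gamma$-orbits of nodes of the Dynkin diagram of $\h$, that is with the orbits of the group of automorphisms of that diagram. This is the assertion of the theorem.

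I expect the genuine work to be in this last step: verifying that \ref{foldeq}, under the partition and representative-independence constraints, leaves no possibility beyond the standard foldings. This is a finite but slightly delicate combinatorial check on Dynkin diagrams, in which one must also pin down the $\bB$ versus $\bC$ normalisation — dictated here by which of $E_\alpha\cdot C_\beta$, $E_\beta\cdot C_\alpha$ has absolute value greater than $1$, i.e. by which root of a double bond is the short one. A secondary point needing care is the geometric input to the first step, namely that the subregular orbit is the unique codimension-two orbit of $Y$ and that the components of $\overline{\pi^{-1}(G\cdot x_0)}$ are irreducible maximal cycles — but this is standard for the nilpotent cone.
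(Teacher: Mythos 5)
The paper does not actually prove this theorem: it is quoted as a classical result of Brieskorn and Slodowy, with only the preceding remark that the corollary following \ref{class-alpha} (the identity $E_\alpha\cdot C_\beta=-\hbox{Cartan matrix}$) ``is the key''. Your proposal turns that remark into an actual argument, and it is essentially correct: reduce to a two-dimensional transverse slice through a general point of the subregular orbit via \ref{symplectic-strata} and \ref{vertical-slicing}, invoke the classical fact that a two-dimensional symplectic contraction is the minimal resolution of a Du Val singularity, decompose the exceptional $(-2)$-configuration according to which $E_\alpha$ each component lies on, and read off the folding relation between the two Cartan matrices. This is precisely the mechanism the authors gesture at (and it is how the quotient tables $\bA/\bZ_2$, $\bD/\bZ_2$, $\bE_6/\bZ_2$, $\bD_4/\sigma_3$ of \ref{root-systems} are meant to be read), so you are supplying the proof the paper delegates to \cite{Slodowy}; your route through \ref{vertical-slicing} is more synthetic than Slodowy's original transverse-slice computations but reaches the same folding combinatorics.

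Two points should be nailed down rather than asserted. First, you use that every component of $\pi^{-1}(s)$ lies in some $E_\alpha$, i.e.\ that the $E_\alpha$ exhaust the irreducible exceptional divisors of $\pi$. This is true, but the clean justification is via \ref{cone-structure}: the classes of irreducible exceptional divisors are linearly independent in $N^1(X/Y)\iso\Lambda_W\otimes\bR$, which has rank $n$, and the $n$ divisors $E_\alpha$ already occupy it with the independent classes $-\alpha$ of Lemma \ref{class-alpha}, so there is no room for another. Second, the deferred combinatorial step really is the heart of the matter. Note that the diagonal condition $A^{\g}_{\alpha\alpha}=2$ forces no two nodes of a part $G_\alpha$ to be adjacent (this is exactly what excludes the matrix ${\mathbb U}_n$ of \ref{U-matrix}, which arises from the adjacent middle pair of $\bA_{2n}$). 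For simply laced $\g$ the uniqueness is then clean: all folded off-diagonal entries being $0$ or $-1$ forces each inter-part edge set to be a perfect matching, so the partition exhibits the diagram of $\h$ as a covering graph of the tree underlying the diagram of $\g$; trees admit no nontrivial connected covers, hence $\h=\g$ and the partition is trivial. For $\bB_n,\bC_n,\bF_4,\bG_2$ the check is finite, and the $\bB$-versus-$\bC$ ambiguity is resolved, as you say, by which of the two off-diagonal entries $E_\alpha\cdot C_\beta$, $E_\beta\cdot C_\alpha$ has absolute value $2$, which Lemma \ref{class-alpha} computes explicitly. With these two points made explicit your argument is complete.
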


We have the following immediate consequence of \ref{contrF} and
\ref{class-alpha}.

\begin{corollary}\label{MovNefWeyl}
  In the above case $\Mov(X/Y)=\Nef(X/Y)$ coincides with the Weyl
  chamber.
\end{corollary}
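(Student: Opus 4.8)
The plan is to prove the only substantive assertion, that $\Mov(X/Y)=\Nef(X/Y)$; the identification with the Weyl chamber then follows immediately from $\Nef(X/Y)=\Nef(F)$ together with the isomorphism $\Pic F\iso\Lambda_W$ carrying $\Nef(F)$ onto the Weyl chamber, both recalled above.

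First I would check the inclusion $\Nef(X/Y)\subseteq\Mov(X/Y)$. Every class in $N^1(X/Y)=N^1(F)$ is pulled back, along the bundle projection $X=\Spec_F(\Symm(TF))\ra F$, from a class on $F$, and a nef class on the flag variety $F$ is base point free (it is the complete linear system of an irreducible representation, as recalled above). Hence its pull-back to $X$ is base point free as well, so the corresponding linear system has no fixed component and the class lies in $\Mov(X/Y)$.

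For the opposite inclusion I would argue by contradiction, exploiting that each facet of $\Nef(X/Y)=\Nef(F)$ supports a \emph{divisorial} contraction of $X$. Recall that the Weyl chamber is rational polyhedral with facets $\alpha^{\perp}$ for $\alpha$ a simple root, and that in $N_1(X/Y)$ the dual ray of the facet $\alpha^{\perp}$ is spanned by $\alpha^{\vee}=[C_\alpha]$, where $C_\alpha$ is a general fibre of the divisorial contraction $\pi_\alpha: X\ra X_\alpha$ restricted to its exceptional divisor $E_\alpha$ (this is exactly where Lemmas \ref{contrF} and \ref{class-alpha} and the discussion preceding the corollary enter). Since $\Nef(X/Y)$ is convex and $\Mov(X/Y)$ is the cone spanned by classes of linear systems with no fixed components, it suffices to show that any effective $\Q$-divisor $D$ with $|mD|$ free of fixed components (for $m\gg 0$) is nef over $Y$. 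If it is not, then $D\cdot C_\alpha<0$ for some simple root $\alpha$. Choosing an effective member $M\in|mD|$, every irreducible curve $C$ in the class $[C_\alpha]$ satisfies $M\cdot C=mD\cdot C<0$, hence $C\subseteq\supp M$; but the general fibres of $\pi_\alpha|_{E_\alpha}$ form such a family sweeping out a dense subset of $E_\alpha$, so $E_\alpha\subseteq\supp M$. Thus $E_\alpha$ is a fixed component of $|mD|$, contradicting the choice of $D$. Therefore $\Mov(X/Y)\subseteq\Nef(X/Y)$, and equality holds.

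I expect the real content to be precisely the input that each facet contraction $\pi_\alpha$ is divisorial rather than small: this is what rules out any Mukai-type flop across a wall of the Weyl chamber and makes that chamber a single maximal Mori chamber, everything past that being formal cone bookkeeping. This, however, is already supplied by Lemmas \ref{contrF}, \ref{class-alpha} and the preceding discussion, so no serious extra work is needed. (Equivalently, once one knows that $X$ is a Mori Dream Space over $Y$, the statement drops out of Definition \ref{defMDS}: the absence of a small extremal contraction means $X$ has no SQM other than itself, so $\Mov(X/Y)$ equals the single nef cone $\Nef(X/Y)$.)
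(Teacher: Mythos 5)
Your proof is correct and is exactly the argument the paper leaves implicit when it calls the corollary an ``immediate consequence'' of Lemmas \ref{contrF} and \ref{class-alpha}: nef classes on $F$ are base point free, hence pull back to movable classes on $X$, while a movable class must be nonnegative on each $C_\alpha$ because these curves sweep out the divisor $E_\alpha$, and the facets of the Weyl chamber are precisely the $\alpha^{\perp}$. Your closing observation --- that the real content is the divisoriality of every facet contraction, which forbids any flop out of the Weyl chamber --- is also the right way to read the statement.
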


\subsection{ Resolving $\bC^4/\sigma_3$ } \label{sigma3}

In this last section we will give an explicit  description of the symplectic resolution of the
quotient $\C^4 / \sigma_3$ introduced in section \ref{sigma3a} (see also \cite{AndreattaWisniewski2}). We 
will constantly refer to the following commutative
diagram, which comes from the presentation of $\sigma_3=D_6$ in terms
of a semisimple product, $\sigma_3 = \bZ_3\rtimes\bZ_2$:
\begin{eq}\label{resolve-sigma3}
\xymatrix{ & W  \ar[d]_{p_1}  \ar[r]^{\nu} & Z\ar[d]_{p_2}\ar[dr] \\
 &T  \ar[d]_{q}  \ar[r]&T/\Z_2  \ar[d]&X\ar[dl]^{\pi}\\
\C^4 \ar[r] & \C^4/ \Z_3 \ar[r] & \C^4 / \sigma_3}
\end{eq}

The vertical map $q: T \ra \C^4/\Z_3$ is the toric resolution of $ \C^4/ \Z_3$
which can be described as follows. Let $N_0$ be a lattice with the
basis $e_1,e_2,f_1,f_2$ and in $N_0\otimes\bR$ take the standard cone
$\langle e_1,e_2,f_1,f_2 \rangle$ representing $\C^4$.  The toric
singularity $\C^4/\Z_{3}$ is obtained by extending $N_0$ to an
overlattice $N$ (keeping the same cone) generated by adding to $N_0$
an extra generator $v_1=(e_1+e_2)/3+2(f_1+f_2)/3$. If
$v_2=2(e_1+e_2)/3+(f_1+f_2)/3$ then the rays generated by $e_i$'s,
$f_i$'s and $v_i$'s are in the fan of the toric resolution of
$\C^4/\Z_3$ which is presented in the following picture by taking a
affine hyperplane section of the cone $\langle e_1,e_2,f_1,f_2
\rangle$. The solid edges are the boundary of the cone while its
division is marked by dotted line segments.
$$
\begin{xy}<16pt,0pt>:
(0,0)*={\bullet}="0", (0,-0.5)*={2e_1},
(6,-1)*={\bullet}="1",(6.5,-1.2)*={2f_1},
(9,3)*={\bullet}="2", (9.5,3.2)*={2f_2},
(2,5)*={\bullet}="3", (1.5,5.1)*={2e_2},
(3.16,2)*={\bullet}="4", (3,1.5)*={v_2},
(5.33,1.5)*={\bullet}="5", (5.1,1)*={v_1},
"0" ; "1" **@{-},
"0" ; "2" **@{--},
"0" ; "3" **@{-}, 
"1" ; "2" **@{-}, 
"1" ; "3" **@{-}, 
"2" ; "3" **@{-}, 
"4" ; "0" **@{.}, 
"4" ; "1" **@{.}, 
"4" ; "2" **@{.}, 
"4" ; "3" **@{.}, 
"5" ; "0" **@{.}, 
"5" ; "1" **@{.}, 
"5" ; "2" **@{.}, 
"5" ; "3" **@{.}, 
"4" ; "5" **@{.}
\end{xy}
$$
The exceptional set of this resolution consists of two
divisors, $E_1, E_2$, both isomorphic to a $\proj^2$- bundle over
$\pu$, namely $\proj(\cO(2)\oplus\cO\oplus\cO)$.  They intersect along
a smooth quadric $\pu \times \pu$. 

The action of $\Z_2$
on $\bC^4/\bZ_3$ can be lifted up to an action on $T$. This action,
which is induced by the reflections in $\sigma_3=D_6$, identifies the
two divisors by identifying the $\proj^2$ ruling of $E_1$ with that of
$E_2$; on the intersection the action interchanges the coordinates on
$\pu \times \pu$.

Going back to the diagram \ref{resolve-sigma3},  $p_2$ is the resolution of the quotient $T/\Z_2$ obtained by
blowing up the surface which is the locus of $A_1$-singularities. The
morphism $p_1$ is the blow-up along the fixed point set of the
$\Z_2$-action. We denote by $\Delta _ W$ and $\Delta_Z$ the
exceptional divisors.  Then $\nu$ is a $2:1$ cover ramified along
$\Delta_W$.

The divisor $\Delta_W$ is irreducible and its intersection with the
fiber over the special point, which is the strict transformof $E_1\cup
E_2$, call it  $E_1'\cup
E_2'$, is equal to the 3rd Hirzebruch surface $F_3$. This follows from
computing the normal of the curve which is the fixed point set of the
$\Z_2$ action in the exceptional locus of $T$. Indeed, the normal of
the intersection $E_1\cap E_2=\pu \times \pu$ is $\cO(1,-2)+\cO(-2,1)$
and the normal of the diagonal in the intersection is $\cO(2)$. Thus
the normal of the diagonal of $\pu \times \pu$ in $T$ is
$\cO(-1)\oplus\cO(-1)\oplus\cO(2)$ and, since its normal in the fixed
point set is $\cO(-1)$, it follows that the normal of the fixed point
set over the diagonal is $\cO(-1)\oplus\cO(2)$. Finally, let us note
that the intersection in $W$ of the $F_3$ surface with the strict
transform of $\pu\times\pu$ is the exceptional curve 
in the surface $F_3$ and the diagonal in $\pu\times\pu$.

The fibers of the ruling $E_i \ra \pu$, for $i=1,\ 2$,  are blown up in
$E_i'$ to ruled surfaces (1st Hirzebruch) and the map $E_i'\ra \pu$
can be factored either by blow down $E_i'\ra E_i$ or by a $\pu$-bundle
$E_i'\ra F_3$.

The strict transform of the surface $E_1\cap E_2=\pu\times\pu$ is
mapped via the quotient map $W\ra Z$ to $\proj^2$, and this is a
double covering ramified over the diagonal in $\pu\times\pu$.  The
exceptional curve of $F_3$, which is the diagonal in $\pu\times\pu$,
becomes a conic in $\proj^2$.  Thus, eventually, we see that $E_1'$ is
identified with $E_2'$ and via $\nu$ they are sent to a (non-normal) divisor $E_Z$ in $Z$. The
divisors $\Delta_W$ and $E_Z$ generate $Pic Z$ and $K_Z=E_Z$.

From the computation of the intersection of curves and divisors we see
that the divisor $E_Z$ is not numerically effective, hence $Z$ admits
a birational Fano-Mori contraction $Z\ra X$ with exceptional divisor
$E_Z$. We describe the contraction by looking at the normalization of
$E_Z$. Namely, by looking at the numerical classes of curves, we
conclude that the resulting map is a composition $E_1'\ra F_3\ra S_3$,
where the latter map is the contraction of the exceptional curve in $F_3$
to the vertex of the cubic cone $S_3$.  Therefore a general fiber of
$Z\ra X$ over $E_Z$ is a $\pu$ --- that is, generally this is a
blow-down of the divisor $E_Z$ to a surface --- while the special
fiber is a $\proj^2$.  Such a contraction was discussed in
\cite{AndreattaWisniewski} where it was proved that the image $X$ is a
smooth $4$-fold and the divisor $E_Z \subset Z$ is blow-down to the
rational cubic cone $S_3 \subset X$. Moreover $K_X = \cO_X$.

Let us finally consider the induced map $\pi : X \ra Y:= \C^4 /
\sigma_3$. It is a crepant contraction which contracts the divisor
$\Delta_W$ to a surface $S$ which, outside the point $0$, is a smooth
surface of $A_1$ singularities (coming from the $\Z_2$-action);
moreover it contracts $S_3$ to $0$. The surface $S$ is non-normal in
$0$. This is a crepant, hence symplectic, resolution of $\C^4 /
\sigma_3$.

\smallskip Note that $Pic(X/Y) = \Z$, therefore $\Mov(X/Y)$ is one
dimensional. This is the only SQM model over $Y$.

\smallskip We conclude with the description of the family of rational
curves (of $X$ over $Y$). Let $C$ be the essential curve of the symplectic
resolution $\pi : X \ra Y:= \C^4 / \sigma_3$ and let $\cV \subset
RatCurves^n(X/Y)$ be a family containing $C$. Then $\cV$ is a smooth
surface which contains a $(-1)$-curve, which parametrizes the lines in
the ruling of $S_3$. The normalization of $S$ is a smooth surface and
$\cV$ is obtained by blowing up the point of the normalization which
stays over $0$.


\end{document}